\newtheorem{theorem}{Theorem}
\numberwithin{theorem}{section}
\newtheorem*{theorem*}{Theorem}
\newtheorem{proposition}[theorem]{Proposition}
\newtheorem{lemma}[theorem]{Lemma}
\newtheorem{remark}[theorem]{Remark}
\newtheorem*{example*}{Example}
\providecommand{\customgenericname}{}
\newcommand{\newcustomtheorem}[2]{%
	\newenvironment{#1}[1]
	{%
		\renewcommand\customgenericname{#2}%
		\renewcommand\theinnercustomgeneric{##1}%
		\innercustomgeneric
	}
	{\endinnercustomgeneric}
}
\tikzset{decision/.style={diamond, draw, text width=4.5em, text badly centered, inner sep=0pt}}
\tikzset{inout/.style={ellipse, draw, text width=7em, text centered, rounded corners,
		minimum width=3.5cm}}
\tikzset{block/.style={rectangle, draw, text width=12em, text centered, rounded corners,
		minimum width=3.5cm}}
\tikzset{block1/.style={rectangle, draw,fill=gray!20, text width=10em, text centered, rounded corners,
		minimum width=3.5cm}}
\tikzset{line/.style={draw, -latex}}
\newtheorem{tm}{Theorem}[section]
\newtheorem{rk}{Remark}[section]
\newtheorem{df}{Definition}[section]
\newtheorem{lm}{Lemma}[section]
\newtheorem{corollary}{Corollary}[section]
\newcommand{\E}{\mathbb E}
\newcommand{\bi}{\mathbf i}
\newcommand{\bs}{\mathbf s}
\newcommand{\<}{\langle}
\renewcommand{\>}{\rangle}
\newcommand\blfootnote[1]{%
	\begingroup
	\renewcommand\thefootnote{}\footnote{#1}%
	\addtocounter{footnote}{-1}%
	\endgroup
}
\begin{document}

% \title[short text for running head]{full title}
    \title{A Wong--Zakai resonance-based integrator for the nonlinear Schr\"odinger equation with white noise dispersion}

%    Only \author and \address are required; other information is
%    optional.  Remove any unused author tags.

%    author one information
% \author[short version for running head]{name for top of paper}

\author{Jianbo Cui\thanks{jianbo.cui@polyu.edu.hk} \and Georg Maierhofer\thanks{gam37@cam.ac.uk}}
\date{\footnotemark[1]\ Department of Applied Mathematics, The Hong Kong Polytechnic
	University\\\ \vspace{-0.2cm}\\
	\footnotemark[2]\ Department of Applied Mathematics and Theoretical Physics, University of Cambridge\\\ \vspace{0.2cm}\\\today}
%Headline
\lhead{}%\slshape\nouppercase{\leftmark}}
\chead{Resonance-based method for the NLSE with white noise dispersion}
\rhead{}
\lfoot{}
\cfoot{\textsc{\thepage}}
\rfoot{\textsc{J. Cui \& G. Maierhofer}}

%    author two information

%    \subjclass is required.

\maketitle

%    Abstract is required.
\begin{abstract}
We introduce a novel approach to the numerical approximation of the nonlinear Schr\"odinger equation with white noise dispersion in the regime of low-regularity solutions. Approximating such solutions in the stochastic setting is particularly challenging due to randomized frequency interactions and presents a compelling challenge for the construction of tailored schemes. In particular, we design the first resonance-based schemes for this equation, which achieve provable convergence for solutions of much lower regularity than previously required. A crucial ingredient in this construction is the Wong--Zakai approximation of stochastic dispersive system, which introduces piecewise linear phases that capture nonlinear frequency interactions and can subsequently be approximated to construct resonance-based schemes. We prove the well-posedness of the Wong--Zakai approximated equation and establish its proximity to the original full stochastic dispersive system. Based on this approximation, we demonstrate an improved strong convergence rate for our new scheme, which exploits the stochastic nature of the dispersive terms. Finally, we provide numerical experiments underlining the favourable performance of our novel method in practice.
\end{abstract}
\blfootnote{\textbf{2020 Mathematics Subject Classification.} \textit{Primary:} 65M12, 65M75, 35R60, 35Q41, 35Q55.}\blfootnote{\textbf{Key words and phrases.} Nonlinear Schr\"odinger equation, white noise dispersion, low regularity, Wong--Zakai approximation, resonance-based scheme.}

	\section{Introduction} 

The nonlinear Schr\"odinger equation (NLSE) with random dispersion,
\begin{align}\label{nls-random}
\frac {d v}{dt}(t)=\bi \frac 1 \epsilon m\!\left(\frac t{\epsilon^2}\right)\Delta v(t)+ \bi \lambda |v(t)|^2v(t),\quad v(0)=u_0,   
\end{align}
with  $t>0$, $\lambda\neq 0$ and $\epsilon>0$,
is a model describing the propagation of a signal in an optical fibre with dispersion management (see, e.g., \cite{Agr21}).
Here $\bi$ denotes the imaginary unit, and $m$ is a real-valued centered stationary continuous random process, $\Delta$ is the Laplacian operator on the 1D torus $\mathbb T=\mathbb{R}/(2\pi\mathbb{Z})$, and  $u_0:x \in \mathbb T \mapsto u_0(x)\in \mathbb C$ is a complex-valued  function. 
In fiber optics, $x$ is used  to denote the retarded time, $t$ is used to denote the distance along the fibre,
and the nonlinearity models the nonlinear response of the random medium to the electric field. 
Under certain ergodic assumptions on the random process $m$, it has been  shown in \cite{MR2652190} that the limiting equation of \eqref{nls-random} when $\epsilon \to 0$ is the stochastic NLSE with white noise dispersion,
\begin{align}\label{model2}
du(t)=\bi \Delta u(t)\circ dB(t)+\bi \lambda |u(t)|^2 u(t)dt,\quad u(0)=u_0,
\end{align}
where $B$ is the standard Brownian motion defined on a completed filtered probability space $(\Omega,\mathcal F, \{\mathcal F_t\}_{t\ge 0},\mathbb P)$, and $\circ$ denotes the Stratonovich product.

Since in general \eqref{nls-random} and \eqref{model2} do not have closed-form solutions, numerical methods for this system have received an increasing amount of attention in recent years: Eq. \eqref{nls-random} has been initially studied in \cite{MR4278943} by a split step numerical scheme, where the cubic nonlinearity is replaced by a nicer Lipschitz function such that a good approximation error can be established. In \cite{MR4167043}, randomized exponential integrators have been proposed for modulated nonlinear Schr\"{o}dinger equations, i.e., \eqref{nls-random} with  $\frac 1{\epsilon}m(\frac t{\epsilon^2})$ replaced by a general time-dependent function $g$. In later work, \cite{belaouar2015numerical} proposed a semi-discrete Crank--Nicolson scheme and derived its convergence order in probability for the white noise dispersion case (i.e., \eqref{model2}). Inspired by  the corresponding numerical tests, \cite{MR2832639} further obtained the well-posedness of the 1D quintic NLSE with white noise dispersion. We refer to \cite{Ste24,MR4288117,MR4250287,MR4755536} and references therein for more recent theoretical  results.
The stochastic symplectic and multi-symplectic structure of \eqref{model2} were discovered in \cite{cui2017stochastic}, where the symplectic and multi-symplectic integrators were also developed for \eqref{model2}. The authors in \cite{cohen2017exponential}
considered an explicit  exponential integrator for \eqref{model2} and derived  its convergence order in probability. Later, a splitting scheme for Schr\"odinger equation with nonlocal interaction cubic nonlinearity and white noise dispersion  was analyzed in \cite{MR4400428}.
A  methodology based on the multi-revolution idea was used in \cite{MR4048623} to propose a weakly convergent scheme for solving a highly oscillatory NLSE with white noise dispersion. 

In all prior work, it was observed that deriving convergence of order one in the Sobolev  $H^{\bs}$-norm with $\bs>\frac 12$ of numerical schemes for \eqref{model2}  usually requires initial data $u_0\in H^{\bs+4}.$ This is problematic because it prevents the accurate simulation of low-regularity solutions which is in contrast to theoretical analysis showing that \eqref{model2} is globally well-posed for initial data in $L^2$ (cf. \cite{MR2652190} and \cite{Ste24}). In particular, if $u_0\in H^{\bs+4}$, for a given numerical scheme of \eqref{model2} with numerical solution $\{u^n\}_{n=0}^{K}$ with $K\in \mathbb N^+$, one may expect linear convergence order in probability with the aforementioned methods, i.e.,
\begin{align*}
  \lim_{\mathcal M\to\infty} \mathbb P(\|u(t_n)-u^n\|_{H^{\bs}}\ge  \mathcal M\tau)=0, \quad 0\le n\le K,  
\end{align*}
or linear convergence order in the pathwise sense, i.e.
\begin{align*}
   \|u(t_n)-u^n\|_{H^{\bs}}\le \mathfrak{R} \tau,\quad 0\le n\le K.
\end{align*}
 Here $\tau=\frac {T}K$ is the the timestep size with $T$ being the terminal time, $t_n=n\tau$ and $\mathfrak R$ is a random variable that depends on $H^{s+4}$-norm of the solution $u(\cdot)$. Formally speaking, to let the solution $u(\cdot)$ of \eqref{model2}
enjoy the $\frac 12$-H\"older regularity in time under the $H^{\bs}$-norm, one needs the $H^{\bs+2}$-regularity of $u(\cdot)$ in space such that the right-hand side is finite. As a consequence, a traditional numerical scheme for \eqref{model2} is expected to be of order $\frac 12$ if $u_0\in H^{\bs+2}$ 
since the increment of Brownian motion is roughly of the size $\sqrt{\tau}$. To derive first order convergence, one needs more regularity assumption such as $u_0\in H^{\bs+4}.$ 

This phenomenon is similar to that in the deterministic case where one usually needs the initial data $u_0\in H^{\bs+2}$ such that the  numerical scheme is convergent of order $1$ under the $H^{\bs}$-norm. 
To reduce the regularity assumption on the initial data, \cite{ostermann2018low} introduced a low regularity exponential-type integrator
which is shown to possess first-order convergence in $H^{\bs}$ under the low regularity assumption $u_0\in H^{\bs+1}.$ For deterministic NLSEs, this low regularity assumption can be even further relaxed in some special situations \cite{ostermann2018low,MR4312402}. Such low-regularity integrators have turned out to be a highly versatile class of numerical methods that can be applied to a wide range of dispersive systems \cite{alama2023symmetric,bronsard2023error,banicamaierhoferschratz22,feng_maierhofer_schratz_2023,alamabronsard_et_al23,bruned_schratz_2022,feng2024explicit,rousset2021general}. Thus, it is natural to study the influence of random noise on low-regularity integrators for the stochastic NLSE, such as \eqref{model2}.

Recently, there are some numerical works focusing on low-regularity integrators for a stochastic NLSE driving by a random force, 
\begin{align}\label{model-snls}
 du=\bi\Delta udt+\bi \lambda|u|^2u dt+\bi \sigma(u)dW(t),   
\end{align}
where $\sigma(u)=1$ (corresponds to the additive noise case) or $\sigma(u)=u$ (corresponds to the linear multiplicative noise case), and $W$ is a $Q$-Wiener process. For instance, in \cite{arXiv:2312.16690}, the authors construct low-regularity integrators for \eqref{model-snls} which possess pathwise convergence order $\frac 12$ under the $L^2$-norm if $u(0)\in H^1$ and $W(t)\in H^2$. Under additional regularity assumptions that $u_0\in H^2$ and $W(t)\in H^4$, a first order scheme is also developed for \eqref{model-snls} driven by a linear multiplicative noise. 
For \eqref{model-snls} driven by the additive noise, the authors in \cite{arXiv:2410.22201} construct a perturbed first-order non-resonant low-regularity integrator under $H^{\bs}$-norm, $\bs>\frac 12$, if $u_0\in H^{\bs+1/2}$ and $W(t)\in H^{\bs+2}$, and analyze its 
long-term error under a smallness condition on  $u_0$ and $W$. 

However, to the best of our knowledge, there has been no prior work on the construction of low-regularity integrators for \eqref{model2}.  
Compared with \eqref{model-snls}, the numerical analysis of low-regularity integrators for \eqref{model2} has intrinsic differences and provides new, interesting challenges. Firstly, for \eqref{model-snls}, with the help of the moment bounds and exponential integrability (see, e.g., \cite{MR3826675}), one can study the strong convergence for  numerical schemes - allowing for stronger statements than convergence in probability or pathwise convergence. In contrast, since there is no a uniform moment estimate for \eqref{model2}  under the Sobolev norm $H^{\bs},\bs>0$ (cf.  \cite{MR2652190,Ste24}), existing numerical results are  mainly focused on the convergence in probability or pathwise convergence. 
Secondly, different from \eqref{model-snls},
 due to the effect of random dispersion, the minimum regularity condition on $u_0$ required to derive first order convergence of the low-regularity integrator is still unclear.  
Finally, in the construction of low regularity integrators,  one can use resonance-based techniques as in the deterministic case \cite{ostermann2018low} since the resonance-based part for the cubic nonlinearity in \eqref{model-snls} is the same.   For \eqref{model2}, we can not directly use the techniques as in the deterministic case 
since the many highly oscillatory random integrals emerging from the two-parameter group $e^{\bi \Delta(B(t)-B(s))}, t,s\ge 0,$ cannot be solved exactly.

To address these issues, we develop the following techniques for the construction of a new low-regularity integrator for \eqref{model2}, the stochastic dispersion low-regularity integrator (SDLRI) \eqref{eqn:1st_order_lri}, which are the central contributions of this present work:

\begin{itemize}
    \item  Wong--Zakai approximation: we introduce a regularized approximation of the original model \eqref{model2} via a linear Wong--Zakai approximation of the Brownian motion  \cite{MR183023}. 
    %Then we derive the pathwise Strichartz estimate of the linear problem which is different from that in \cite{Ste24}. 
    Furthermore, we 
  prove the well-posedness and mass conservation law of this regularized model.   
    \item Computing the oscillatory random integral: When devising the low-regularity integrator of \eqref{model2}, it is essential to consider the term
    \begin{align*}
\int_0^{\tau}e^{\bi(B(t_n+s)-B(t_n))(k^2+k_1^2-k_2^2-k_3^2)}ds
    \end{align*}
for all $n\le K$ with $\tau=\frac T K$ and all $k+k_1=k_2+k_3, (k,k_1,k_2,k_3)\in \mathbb Z^4.$ Since it is hard to directly calculate this term and the corresponding dominant interaction term $\int_0^{\tau}e^{\bi(B(t_n+s)-B(t_n))2k_1^2}ds$, we make use of the linear Wong--Zakai approximation to compute these resonance terms. As a result, the proposed numerical scheme \eqref{model2} can be viewed as a low-regularity integrator for the regularized model (see section \ref{sec-3}). This strategy also provides a basis for the systematic construction of resonance-based
schemes for stochastic partial differential equations with white noise dispersion in future work. 
    
\item Strong convergence error analysis: 
We focus on the strong convergence error of numerical methods  for \eqref{model2}, which is 
different from \cite{MR4400428,cohen2017exponential,cui2017stochastic,belaouar2015numerical} and references therein. More precisely, under a smallness condition on the seminorm of $u_0$, we show that (see Corollary \ref{cor-1} for details)
\begin{align}\label{main-err}
    &\sup_{n\le K}\left\|u(t_n)-u^{\delta,R,N}_n\right\|_{L^2(\Omega;H^{\bs})} \lesssim  \tau^{\max(\frac {\gamma+1}4,1)}+\delta^{\max (\frac{\gamma}4,1)}+N^{-\gamma}, 
\end{align}
where $\tau=\frac {T}{K}>0$ is the time stepsize, $\delta>0$ is the width of Wong--Zakai approximation, $R>0$ is a truncation parameter and $N^+=\in\mathbb N$ is the parameter of the Galerkin projection.
Here 
$\{u_n^{\delta,R,N}\}_{n\le K}$  denotes the numerical solution of the proposed scheme and $u_0\in H^{\bs+\gamma}$ with $\gamma>0.$
Compared with pathwise convergence or convergence in probability, the above error estimate is more useful in the Monte Carlo method to quantify the simulation error.
To the best of our knowledge,  this is the first result on the strong convergence order of numerical methods  for \eqref{model2}. 

\item  Interesting order reduction phenomenon of low-regularity integrator that is  intrinsically different from the deterministic case. To illustrate this clearly, we take $\delta$ small enough such that the error of Wong--Zakai approximation can be ignored. 
Then from \eqref{main-err}, to derive the first order strong convergence w.r.t $\tau$ for the proposed scheme, we need $H^{\bs+3}$-regularity condition on $u_0$ (i.e., $\gamma=3$). 
In contrast, if one performs the pathwise convergence analysis as in subsection \eqref{path-subsec}, it can be seen that the pathwise converge order w.r.t. $\tau$ is $\frac 12$ if $u_0\in H^{\bs+\gamma}$ for any $\gamma\in \mathbb N^+.$

\end{itemize}

The remainder of this article is organized as follows. In section \ref{sec-2}, we introduce the Wong--Zakai approximation of \eqref{model2}, and present its well-posedness and approximation error. In section \ref{sec-3}, we develop the low-regularity integrator of the Wong--Zakai approximation and show its error analysis. Finally, some numerical tests are shown in section \ref{sec-4} to verify our findings and concluding remarks are provided in section~\ref{sec:conclusions}.

\section{Wong--Zakai approximation}
\label{sec-2}
In this section, we present a linear Wong--Zakai approximation  of \eqref{model2}, which will provide an efficient way to compute the oscillatory integral when designing the low-regularity resonance-based schemes in section \ref{sec-3}. For other ways of construction Wong--Zakai approximations, we refer the reader to \cite{SL2017JDE} and references therein. 
%Throughout this paper, we do not show the dependence on $\omega$ of stochastic processes in the notations for simplicity.  

For this we begin by defining the truncated Brownian motion $B^R$ satisfying that for any $t\ge 0,$ $B^{R}(t):=B(t)$ if $|B(t)|\le R\sqrt{t}$, $B^{R}(t)=R\sqrt{t}$ if $B(t)\ge R\sqrt{t}$, and $B^{R}(t)=-R\sqrt{t}$ if $B(t)\le -R\sqrt{t}.$
Then we define a linear interpolant $B^{\delta,R}$ as  
\begin{align}\label{eqn:linear_WZ_approx}
B^{\delta,R}(t):=B^{R}(\lfloor t\rfloor_{\delta})+\frac{t-\lfloor t\rfloor_{\delta}}{\delta}(B^R(\lfloor t\rfloor_{\delta}+\delta)-B^R(\lfloor t\rfloor_{\delta}))
\end{align}
where  the width $\delta>0$ and the truncation parameter $R\in [1,+\infty]$. 
Here $\lfloor t\rfloor_{\delta}:=\delta\lfloor t/\delta\rfloor$ with 
$\lfloor(\cdot)\rfloor$ denoting the integer part of $(\cdot)$. When $R=+\infty,$ $B^{\delta,R}$ is the standard linear Wong--Zakai approximation of the  Brownian motion $B$  \cite{MR183023}.
Introducing this truncated number $R$ will be useful in later sections for obtaining a priori $L^2$-estimates of our new numerical scheme. We also use $a \lesssim b$ to denote $a \le C b$ for $a,b\in \mathbb R$ with some generic constant $C>0$ independent of $R,\delta,$ the sample $\omega\in \Omega$, time stepsize $\tau$ and spatial parameter $N\in \mathbb N^+.$

\subsection{Wong--Zakai approximation of truncated Brownian motion}\label{sub-sec-wong-zakai}
We start with the approximation error of the Wong--Zakai approximation \eqref{eqn:linear_WZ_approx}.

\begin{lm}\label{lm-2.1-wong-zakai}
Let $T>0$, $p\ge 1$ and $\delta\in (0,1)$. There exists $R_0=\max\big(\sqrt{4p|\ln(\delta)|}, $ $ pe^{-1}\big)>0$ such that for any $R\ge R_0$ and any $t\in [0,T],$ it holds that \begin{align}\label{err-wk}
     \|B^{\delta,R}(t)-B(t)\|_{L^p(\Omega)}\lesssim \sqrt{\delta}.   
    \end{align}
\end{lm}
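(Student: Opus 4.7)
The plan is to split the error via the triangle inequality into a truncation piece and a linear-interpolation piece, and show each is $\lesssim \sqrt{\delta}$ under the stated lower bound on $R$. Writing $s=\lfloor t\rfloor_\delta$ and $\alpha=(t-s)/\delta\in[0,1)$, I use
\[
\|B^{\delta,R}(t)-B(t)\|_{L^p(\Omega)}\le \|B^{\delta,R}(t)-B^R(t)\|_{L^p(\Omega)}+\|B^R(t)-B(t)\|_{L^p(\Omega)}.
\]

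For the truncation term, since $B(t)/\sqrt{t}$ is a standard Gaussian $Z$, I have $|B^R(t)-B(t)|=\sqrt{t}\,(|Z|-R)_+$. I would estimate $\E[(|Z|-R)_+^p]$ by the change of variable $u=z-R$ in the Gaussian integral, giving the bound
\[
\E[(|Z|-R)_+^p]\le \tfrac{2}{\sqrt{2\pi}}e^{-R^2/2}\int_0^\infty u^p e^{-uR-u^2/2}\,du\le \tfrac{2}{\sqrt{2\pi}}\,e^{-R^2/2}\,\frac{p!}{R^{p+1}},
\]
so that, after taking $p$-th roots and using Stirling's bound $(p!)^{1/p}\lesssim p/e$,
\[
\|B^R(t)-B(t)\|_{L^p(\Omega)}\lesssim \sqrt{T}\,e^{-R^2/(2p)}\,(p/e)\,R^{-1-1/p}.
\]
The condition $R\ge pe^{-1}$ makes the polynomial prefactor bounded by an absolute constant, and the condition $R^2\ge 4p|\ln\delta|$ yields $e^{-R^2/(2p)}\le\delta^{2}$, so this term is $\lesssim \delta^{2}\le \sqrt{\delta}$.

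For the interpolation term, the definition \eqref{eqn:linear_WZ_approx} gives the identity
\[
B^{\delta,R}(t)-B^R(t)=(1-\alpha)\bigl(B^R(s)-B^R(t)\bigr)+\alpha\bigl(B^R(s+\delta)-B^R(t)\bigr),
\]
so by the triangle inequality it suffices to bound $\|B^R(a)-B^R(b)\|_{L^p(\Omega)}$ for $|a-b|\le\delta$. Writing $B^R(a)-B^R(b)=\bigl(B^R(a)-B(a)\bigr)-\bigl(B^R(b)-B(b)\bigr)+\bigl(B(a)-B(b)\bigr)$, the first two terms are handled by the truncation estimate above, and the Brownian increment satisfies $\|B(a)-B(b)\|_{L^p(\Omega)}\lesssim \sqrt{|a-b|}\le \sqrt{\delta}$ by the Gaussian moment formula.

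Combining these pieces gives the claimed bound. The only genuinely delicate point is the calibration of the two conditions on $R_0$: I expect the $R\ge pe^{-1}$ condition to come from controlling $(p!)^{1/p}/R^{1+1/p}$ in the Gaussian-tail estimate, while the condition $R\ge\sqrt{4p|\ln\delta|}$ is precisely what turns the exponential factor $e^{-R^2/(2p)}$ into a power of $\delta$. Once those are set, the rest of the argument reduces to standard Brownian-increment estimates and a convex-combination triangle inequality.
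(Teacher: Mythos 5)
Your proposal is correct and follows essentially the same route as the paper: split via the triangle inequality into a truncation error, bounded by a Gaussian tail computation giving $e^{-R^2/(2p)}\le\delta^2$ under the stated conditions on $R$, and an interpolation error, reduced to increments over intervals of length at most $\delta$ and bounded by $\sqrt{\delta}$ plus truncation corrections. The only cosmetic difference is that you reuse the sharp $\delta^2$ truncation bound for the correction terms $\|B^R(a)-B(a)\|_{L^p(\Omega)}$ (via the convex-combination identity), whereas the paper uses the slightly cruder bound $C R\sqrt{T}e^{-R^2/(4p)}$; both yield the same conclusion.
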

 \begin{proof}
By the triangle inequality, 
we  have 
\begin{align*}
\|B^{\delta,R}(t)-B(t)\|_{L^p(\Omega)}
&\le \|B^{R}(t)-B(t)\|_{L^p(\Omega)}+\|B^{\delta,R}(t)-B^R (t)\|_{L^p(\Omega)}.
\end{align*}From the definition of $B^R$, taking $R\ge pe^{-1}$, we have 
 \begin{align}\label{err-trun}
& \|B^{R}(t)-B(t)\|_{L^p(\Omega)}\le \Big(\int_{|x|\ge R\sqrt{t}} \frac 1{\sqrt{2\pi t}}e^{-\frac {|x|^2}{2t}} |x-R\sqrt{t}|^p dx\Big)^{\frac 1p}\\\nonumber
 &\le \Big(\int_{|y|\ge R} \frac 1{\sqrt{2\pi }}e^{-\frac {|y|^2}{2}} |y-R|^p dy\Big)^{\frac 1p}\\\nonumber
 &\le \Big(e^{-\frac{R^2}2}\int_{0}^{+\infty} \frac 2{\sqrt{2\pi }}e^{-\frac {|y|^2}{2}} e^{-Ry}|y|^p dy\Big)^{\frac 1p}< e^{-\frac {R^2}{2p}}\lesssim \delta^2.
 \end{align}
 
On the other hand, using again the triangle inequality and \eqref{err-trun},  we find that there exists some $C_1>0$ such that \begin{align*}
 &\|B^{\delta,R}(t)-B^R(t)\|_{L^p(\Omega)}\\
 \nonumber &\le \|B^R(t)-B^R(\lfloor t\rfloor_{\delta})\|_{L^p(\Omega)}+\|B^R(\lfloor t\rfloor_{\delta}+\delta)-B^R(\lfloor t\rfloor_{\delta})\|_{L^p(\Omega)}\\\nonumber
 &\le \|B(t)-B(\lfloor t\rfloor_{\delta})\|_{L^p(\Omega)}+\|B(\lfloor t\rfloor_{\delta}+\delta)-B(\lfloor t\rfloor_{\delta})\|_{L^p(\Omega)}\\\nonumber
&+8R\sqrt{T}\Big(\int_{|x|\ge R}\frac 1{\sqrt{2\pi}}e^{-\frac {|x|^2 }2}dx\Big)^{\frac 1p}\\\nonumber 
&\le C_1 \sqrt{\delta}+C_1 R\sqrt{T}e^{-\frac {R^2}{4p}}.
\end{align*}
where in the last inequality we also use the standard $1/2$-H\"older continuity property of Brownian motion in $L^p(\Omega)$-norm, i.e., $\|B(t)-B(s)\|_{L^p(\Omega)}\lesssim |t-s|^{\frac 12}$.
Letting $R\ge R_0=\max\big(\sqrt{4p|\ln(\delta)|}, pe^{-1}\big)$, we  obtain 
\begin{align}\label{err-trun-est}
  \|B^{\delta,R}(t)-B^R(t)\|_{L^p(\Omega)} \lesssim \sqrt{\delta}.
\end{align}
 We  complete the proof by combining \eqref{err-trun} and \eqref{err-trun-est} together.
 \end{proof}

As a direct consequence of Lemma \ref{lm-2.1-wong-zakai}, we have 
  \begin{align}\label{hold-est}
   \|B^{\delta,R}(t)-B^{\delta,R}(s)\|_{L^p(\Omega)}\lesssim \max(|t-s|^{\frac 12},\delta^{\frac 12}). 
 \end{align}
 For completeness, a proof of \eqref{hold-est} is provided in Appendix~\ref{app:proof_of_hold-est}.

 %Therefore, one can define $W^{\delta,R}(s+t_n)-W^{\delta,R}(t_n)=\sum_{i=0}^{[s]-1} (B^R(t_{n}+(i+1)\delta)-B^R(t_n+i\delta))+\frac {B^R(t_n+([s]+1)\delta)-B^R(t_n+[s]\delta)}{\delta}(s-[s]\delta).$

%\georg{[Maybe include a remark of some other ways of doing WZ? E.g. $W^{\delta}(t):=\int_0^t\frac {B(s+\delta)-B(s)}{\delta} ds:=\int_0^t \mathcal G_{\delta}(\theta_s(B)) ds.$ (This is just an example of Wong--Zakai approximation.)]}

\subsection{Wong--Zakai approximation of the NLSE with white noise dispersion}

Here we prove the well-posedness of its Wong--Zakai approximation, i.e., 
\begin{align}\label{wong-zakai}
du^{\delta,R,N}(t)=\bi \Delta u^{\delta,R,N}(t)\circ dB^{\delta,R}(t)+\bi \lambda \pi_N (|u^{\delta,R,N}(t)|^2 u^{\delta,R,N}(t))dt
\end{align}
under a smallness condition for the initial condition $u^{\delta,R,N}(0)=\pi_N u_0$. Here $N\in \mathbb N^+\cup\{+\infty\}$ and $\pi_N$ denotes the Galerkin projection up to the frequency $|k|\le N$.
We use the convention that $\pi_{+\infty}=I$, $u^{0,+\infty,+\infty}(t)=u(t).$ 

We would like to mention that following the approach in \cite{Ste24}, one can obtain the well-posedness result without the smallness condition. But this approach without the smallness assumption fails to yield the uniform a-priori regularity estimate of $u^{\delta,R,N}$ w.r.t. $\delta,R,N.$ For completeness, we include the proof  without the smallness condition in Appendix~\ref{sec-appendix-2}. The definition of mild solutions is also presented in Appendix~\ref{sec-appendix-1}.

Let $\bs\in \mathbb N.$
Denote the standard Sobolev space by $H^{\bs}:=H^{\bs}(\mathbb T)$ (denote $L^q:=L^q(\mathbb T)$ with $q\in [1,+\infty]$ for simplicity) 
equipped with the norm 
\begin{align*}
\|\cdot\|_{H^{\bs}}&:=\sqrt{\|\cdot\|_{L^2}^2+\|\cdot\|_{\dot H^{\bs}}^2}:=\sqrt{\sum_{m\in \mathbb Z}|\<\cdot,e_m\>|^2+|\lambda_m|^{2\bs} |\<\cdot,e_m\>|^2}.
\end{align*}Here $(\lambda_m,e_m)_{m\in \mathbb Z}$ is the eigensystem of the Laplacian operator (indeed, $\lambda_m=m^2$), and $\dot H^{\bs}$ 
denotes the homogeneous Sobolev space    consisting of the function $f=\sum\limits_{m\neq 0}f_me_m$ equipped with the norm $\|\cdot\|_{\dot H^{\bs}}.$ 
%By imposing the smallness condition on  $\dot H^{\bs}(\mathbb T)$ for some  $\bs\in \mathbb N^+$, we have the following stronger result for the Wong--Zakai approximation with all $\delta\in [0,1)$ and all $N\in\mathbb N^+\cup \{+\infty\}.$ 

%In this case, it is not necessary to take the spectral Galerkin approximation in \eqref{wong-zakai}, i.e., we can take $N=\infty$.

\begin{theorem}
\label{small-wel}
Let $T>0$, $\bs\in \mathbb N^+$ and $u_0\in L^2.$ There exists a constant  $\epsilon>0$ such that if $\|u_0\|_{\dot H^{\bs}}\le \epsilon,$
then  for  \eqref{wong-zakai} with any $\delta\in [0,1)$, $R\in [1,+\infty]$ and $N\in \mathbb N^+\cup\{+\infty\}$, there exists a unique mild solution in $\mathcal C([0,T],H^{\bs})$ such that 
\begin{align}\label{pri-hs}
\|u^{\delta, R,N}\|_{\mathcal C([0,T];H^{\bs})}\leq 2\|u_0\|_{H^{\bs}}+2\epsilon, \;\text{a.s.},
\end{align}
and that, for any $t\in [0,T],$ 
\begin{align}\label{pri-mass-con}
 \|u^{\delta, R,N}(t)\|_{L^2}=   \|\pi_N u_0\|_{L^2}, \;\text{a.s.} 
\end{align}

\end{theorem}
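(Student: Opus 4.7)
The plan is to combine a pathwise fixed-point construction on the mild formulation with a bootstrap $\dot H^{\bs}$ estimate that exploits the smallness of $\|u_0\|_{\dot H^{\bs}}$. Since $B^{\delta,R}$ is absolutely continuous and the generators $\bi\dot B^{\delta,R}(t)\Delta$ commute at different times, \eqref{wong-zakai} possesses a well-defined two-parameter unitary propagator $U(t,s):=e^{\bi\Delta(B^{\delta,R}(t)-B^{\delta,R}(s))}$, which is an isometry on every $H^{\bs}$. The mild formulation therefore reads
\[
u^{\delta,R,N}(t)=U(t,0)\pi_N u_0+\bi\lambda\int_0^t U(t,s)\pi_N\bigl(|u^{\delta,R,N}(s)|^2 u^{\delta,R,N}(s)\bigr)\,ds,
\]
and because $H^{\bs}$ with $\bs\ge 1$ is a Banach algebra on $\mathbb T$, a standard Picard iteration yields local existence and uniqueness in $\mathcal{C}([0,\tau^{\star}];H^{\bs})$ for some $\tau^{\star}>0$ depending only on $\|u_0\|_{H^{\bs}}$ and $|\lambda|$, uniformly in $\delta,R,N$.

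For the mass conservation \eqref{pri-mass-con}, the piecewise linearity of $B^{\delta,R}$ allows pathwise differentiation of $\|u^{\delta,R,N}(t)\|_{L^2}^2$ in $t$. The dispersive term contributes $2\dot B^{\delta,R}(t)\,\mathrm{Re}(\bi\langle\Delta u^{\delta,R,N},u^{\delta,R,N}\rangle)$, which vanishes because $\langle\Delta u,u\rangle=-\|u\|_{\dot H^1}^2\in\mathbb R$; the nonlinear term contributes $2\lambda\,\mathrm{Re}(\bi\langle\pi_N(|u^{\delta,R,N}|^2u^{\delta,R,N}),u^{\delta,R,N}\rangle)$, which vanishes since $\langle\pi_N(|u|^2u),u\rangle=\langle|u|^2u,\pi_N u\rangle=\|u^{\delta,R,N}\|_{L^4}^4\in\mathbb R$ using $\pi_N u^{\delta,R,N}=u^{\delta,R,N}$.

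The core step is the uniform-in-$[0,T]$ $\dot H^{\bs}$ bound under the smallness of $\|u_0\|_{\dot H^{\bs}}$. Using the isometry of $U(t,s)$ on $\dot H^{\bs}$ together with the fractional Leibniz inequality $\||u|^2u\|_{\dot H^{\bs}}\lesssim \|u\|_{L^\infty}^2\|u\|_{\dot H^{\bs}}$, one obtains
\[
\|u^{\delta,R,N}(t)\|_{\dot H^{\bs}}\le \|u_0\|_{\dot H^{\bs}}+C|\lambda|\int_0^t \|u^{\delta,R,N}(s)\|_{L^\infty}^2\,\|u^{\delta,R,N}(s)\|_{\dot H^{\bs}}\,ds.
\]
Combining the 1D Gagliardo--Nirenberg inequality $\|u\|_{L^\infty}^2\lesssim\|u\|_{L^2}\|u\|_{\dot H^1}$ with the interpolation $\|u\|_{\dot H^1}\lesssim\|u\|_{L^2}^{1-1/\bs}\|u\|_{\dot H^{\bs}}^{1/\bs}$ and the mass bound $\|u^{\delta,R,N}(s)\|_{L^2}\le\|u_0\|_{L^2}$ just established, this reduces to
\[
\|u^{\delta,R,N}(t)\|_{\dot H^{\bs}}\le \|u_0\|_{\dot H^{\bs}}+C|\lambda|\,\|u_0\|_{L^2}^{2-1/\bs}\int_0^t \|u^{\delta,R,N}(s)\|_{\dot H^{\bs}}^{1+1/\bs}\,ds.
\]
A continuity/bootstrap argument on $[0,T]$ with the \emph{ansatz} $\|u^{\delta,R,N}(s)\|_{\dot H^{\bs}}\le 2\|u_0\|_{\dot H^{\bs}}+2\epsilon$ then closes provided $\epsilon=\epsilon(T,\bs,|\lambda|,\|u_0\|_{L^2})$ is small enough that $C|\lambda|\,\|u_0\|_{L^2}^{2-1/\bs}(2\|u_0\|_{\dot H^{\bs}}+2\epsilon)^{1+1/\bs}T\le \|u_0\|_{\dot H^{\bs}}+\epsilon$. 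Combined with the $L^2$-mass, this delivers \eqref{pri-hs}, and finitely many iterations of the local theory extend the solution to all of $[0,T]$.

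The delicate point, absent from the unconditional local construction recalled in Appendix~\ref{sec-appendix-2}, is to make the smallness constant $\epsilon$ and all implicit constants independent of $\delta$, $R$, $N$ and the sample $\omega$. This uniformity, crucial for the downstream Wong--Zakai error analysis in section~\ref{sec-3}, relies on the pathwise isometry property of $U(t,s)$ and the pathwise conservation of mass, neither of which involves the regularisation parameters; the proof therefore transports verbatim to the non-truncated, non-projected limit via $(\delta,R,N)=(0,+\infty,+\infty)$.
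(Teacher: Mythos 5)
Your proposal is correct and follows essentially the same route as the paper's proof: a pathwise fixed-point argument for the mild formulation based on the unitarity of $e^{\bi \Delta (B^{\delta,R}(t)-B^{\delta,R}(s))}$ on $H^{\bs}$, mass conservation via the chain rule, and a bootstrap of the $\dot H^{\bs}$-norm using the tame product estimate combined with Gagliardo--Nirenberg and the conserved $L^2$-norm under the smallness assumption on $\|u_0\|_{\dot H^{\bs}}$. The differences (writing out the $L^2$-derivative computation explicitly, and reaching the $\|u\|_{L^\infty}^2\lesssim \|u\|_{L^2}^{2-1/\bs}\|u\|_{\dot H^{\bs}}^{1/\bs}$ bound via $\dot H^1$ interpolation rather than directly) are cosmetic.
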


\begin{proof}
The proof consists of the following two steps. 

\textbf{Step 1}: 
Denote $\mathcal X=\pi_N L^2\cap H^{\bs}$ equipped with the $H^{\bs}$-norm. Here $\pi_N L^2$ is the linear space generated by $\{e_i\}_{ |i|\le N}$.
We  define a map $\Gamma$  by
\begin{align*}
\Gamma(\psi)(t):=e^{\bi \Delta B^{\delta,R}(t)}\pi_N u_0+\bi \int_0^t \lambda e^{\bi \Delta (B^{\delta,R}(t)-B^{\delta,R}(s))} \pi_N |\psi(s)|^2\psi(s)ds
\end{align*}
for any process $\psi \in \mathcal C([0,T_1],\mathcal X)$ with some $0<T_1\le T$ being determined later.  We claim that $\Gamma$ is well-defined. Fix $\mathcal Q>1$ to be determined later and consider the ball $$\mathcal B_{\mathcal Q}:=\{\psi\in \mathcal C([0,T_1],\mathcal X)  \;|\; \|\psi\|_{\mathcal C([0,T_1],\mathcal X)}\le \mathcal Q\}.$$
Note the unitary property of $e^{\bi \Delta (B^{\delta,R}(t)-B^{\delta,R}(s))}$ holds for any $s,t\ge 0$. 
By further using the algebra property of $\dot H^{\bs}$ (since $d=1, \bs\in \mathbb N^+$), and the Gagliardo--Nirenberg interpolation inequality,
\begin{align}\label{GN-ine}
\|f\|_{L^{\infty}}\le C_{GN} \|f\|_{\dot H^{\bs}}^{\frac 1{2\bs}}\|f\|_{L^2}^{\frac {2\bs-1}{2\bs}},\quad \text{for}    \; f\in H^{\bs},
\end{align}
with a constant $C_{GN}>0$,
we obtain that there exists a constant $C_{Sob,1}>0$ such that for any $\psi\in \mathcal B_{\mathcal Q},$
\begin{align}\nonumber
\|\Gamma(\psi)\|_{\mathcal C([0,T_1],H^\bs )}&\le \|u_0\|_{H^\bs}+C_{Sob,1} |\lambda|\int_0^{T_1} \|\psi\|^2_{L^{\infty}}\|\psi\|_{ H^\bs}ds\\\label{wel-map}
&\le \|u_0\|_{ H^\bs}+C_{Sob,1}C_{GN}^2 |\lambda|T_1  \|\psi\|_{\mathcal C([0,T_1],\dot H^{\bs})}^{1+\frac 1\bs} \|\psi\|_{\mathcal C([0,T_1],L^2)}^{2-\frac 1{\bs}}.
\end{align}
As a consequence, if $\|u_0\|_{H^\bs}\le \frac {\mathcal Q }2,$ and $C_{Sob,1}C_{GN}^2|\lambda|T_1 {\mathcal Q}^2\le \frac 12$, then $\Gamma$ maps $\mathcal B_\mathcal Q$ into itself. 

Next we prove the existence of the unique fixed point for $\Gamma.$
For any $\psi_1,\psi_2\in \mathcal B_\mathcal Q,$ similarly to \eqref{wel-map}, we get for some positive constant $C_{Sob,2}>0,$  
\begin{align}\label{wel-map-lip}
&\|\Gamma(\psi_1)-\Gamma(\psi_2)\|_{\mathcal C([0,T_1],  H^\bs)}\\\nonumber
&\le C_{Sob,2}|\lambda|T_1 (\|\psi_1\|_{\mathcal C([0,T_1],L^{\infty})}^2+\|\psi_2\|_{\mathcal C([0,T_1],L^{\infty})}^2) \|\psi_1-\psi_2\|_{\mathcal C([0,T_1],  H^\bs )}\\\nonumber
&\le 2|\lambda|C_{Sob,2}C_{GN}^2T_1 \mathcal Q ^{2}\|\psi_1-\psi_2\|_{\mathcal C([0,T_1], H^\bs)}.
\end{align}
By letting $\|u_0\|_{H^\bs }\le \frac {\mathcal Q}2,$  $C_{Sob,i}C_{GN}^2|\lambda|T_1 \mathcal Q^2< \frac 12$ with $i=1,2$,  we obtain the unique fixed point denoted by $u^{\delta, R,N}\in \mathcal C([0,T_1];\mathcal X)$. It is also  a local  solution  of \eqref{wong-zakai} 
before $T_1= \inf\limits_{i=1,2} \frac 1{|\lambda|\mathcal Q^2C_{GN}^2C_{Sob,i}^2}$ with  $\mathcal Q\ge 2\|u_0\|_{H^{\bs}}.$

\textbf{Step 2:} Now we extend the local solution to a global solution on $[0,T]$ under the condition that $\|u_0\|_{\dot H^{\bs}}\le \epsilon $ with $\epsilon$ being determined later. We define the time $T_2=\inf\{t\in (0,T]|  \|u^{\delta,R,N}(t)\|_{\dot H^{\bs}}>2 \epsilon \}$. %It can be seen that $T_2\le T_1.$
Similarly to \eqref{wel-map}, we have the following upper bound in the homogeneous Sobolev norm, for any $t\le T_2,$
\begin{align*}
&\|u^{\delta, R, N}\|_{\mathcal C([0,t],\dot H^\bs )}\le \|u_0\|_{\dot H^\bs}+C_{Sob,1}C_{GN}^2 |\lambda|t  \|u^{\delta,R, N}\|_{\mathcal C([0,t],\dot H^{\bs})}^{1+\frac 1\bs} \|u^{\delta,R, N}\|_{\mathcal C([0,t],L^2)}^{2-\frac 1{\bs}}.
\end{align*}
Note that the mass conservation law \eqref{pri-mass-con}  holds for $t\in [0,T_2]$ thanks to the chain rule. 
As a consequence, for $\epsilon$ satisfying $$2^{1+\frac 1\bs }C_{Sob,1}C_{GN}^2 |\lambda|T\|u_0\|_{L^2}^{2-\frac 1{\bs}}\epsilon^{\frac 1{\bs}}<1$$ it holds that for any $t\in [0,T_2],$ 
\begin{align*}
\|u^{\delta, R, N}\|_{\mathcal C([0,t],\dot H^\bs )}
&\le \epsilon+2^{1+\frac 1\bs }C_{Sob,1}C_{GN}^2 |\lambda|T  \|u_0\|_{L^2}^{2-\frac 1{\bs}}\epsilon^{1+\frac 1\bs}<2\epsilon.
\end{align*}
The standard bootstrap argument (see, e.g., \cite{MR2233925}) implies that $\|u^{\delta,R,N}(t)\|_{\dot H^{\bs}}\le 2\epsilon$ for any  $t\in [0,T]$. Thus, the existence of the global solution follows and  \eqref{pri-hs}-\eqref{pri-mass-con} holds. 
The uniqueness of the solution follows from the local Lipschitz property of the nonlinearity in $H^{\bs}$.
\end{proof}

\begin{rk}\label{rk-1}
We would like to mention that the smallness condition on the initial data has been often used to deal with  global well-posedness and the longtime stablity of the NLSE (cf. \cite{MR1616917}).

From the above analysis, one can see that for a given small value $\epsilon>0,$ the lifespan time $T$ of $u^{\delta,R,N}$ is at least of the size $\mathcal O\big(\|u_0\|_{L^2}^{-2+\frac 1{\bs}}\epsilon^{-\frac 1{\bs}}|\lambda|^{-1}\big)$
if $\|u_0\|_{\dot H^{\bs}}\le \epsilon.$
If  in addition assume that the standard Sobolev norm $\|u_0\|_{ H^{\bs}}\le \epsilon,$ then the lifespan time of $u^{\delta,R,N}$ is at least of the size $O(\epsilon^{-2}|\lambda|^{-1})$.

\end{rk}

\subsection{Approximation error of the Wong--Zakai approximation}
In this part, we present the  error estimate of \eqref{wong-zakai} compared to the original equation \eqref{model2}.
This ensures that the numerical scheme based on the Wong-Zakai approximation developed in the next section is consistent in a suitable sense. Below is a useful lemma whose proof is presented in Appendix~\ref{app:proof_of_lem-2.1}.

\begin{lemma}\label{lm-2.1}
Let $\bs\in \mathbb N$, $\gamma\in [0,2]$. For all $t_1,t_2\ge 0$ and $\psi\in H^{\bs+\gamma},$
    \begin{align*}
        &\| [e^{\bi \Delta (B(t_1)-B(t_2))}-e^{\bi \Delta (B^{\delta,R}(t_1)-B^{\delta,R}(t_2))}]\psi\|_{H^\bs }\\
        &\lesssim \big(|B(t_1)-B^{\delta,R}(t_1)|^{\frac{\gamma}{2}} +|B(t_2)-B^{\delta,R}(t_2)|^{\frac{\gamma}{2}}\big) \|\psi\|_{H^{\bs+\gamma}}. 
    \end{align*}
\end{lemma}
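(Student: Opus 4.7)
My plan is to pass to the Fourier basis, where the semigroup $e^{\bi\Delta r}$ acts on the $k$-th Fourier mode $e_k$ simply as multiplication by $e^{-\bi k^2 r}$. Writing $a := B(t_1)-B(t_2)$ and $b := B^{\delta,R}(t_1)-B^{\delta,R}(t_2)$, the lemma reduces to controlling
\begin{equation*}
\sum_{k\in\Z}(1+|k|^{2\bs})\bigl|e^{-\bi k^2 a}-e^{-\bi k^2 b}\bigr|^2|\widehat{\psi}_k|^2
\end{equation*}
by $|a-b|^{\gamma}\|\psi\|_{H^{\bs+\gamma}}^2$, from which the conclusion will follow.

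The key pointwise estimate I would use is the elementary interpolation bound
\begin{equation*}
|1-e^{\bi\theta}|\le 2^{1-\gamma/2}\,|\theta|^{\gamma/2},\qquad \gamma\in[0,2],
\end{equation*}
obtained by interpolating between $|1-e^{\bi\theta}|\le 2$ and $|1-e^{\bi\theta}|\le|\theta|$. Applying this with $\theta=-k^2(b-a)$ after factoring out the unit-modulus factor $e^{-\bi k^2 a}$, each Fourier symbol of the difference operator is bounded by $C|k|^{\gamma}|a-b|^{\gamma/2}$. Inserting this into the $H^{\bs}$ norm and using $(1+|k|^{2\bs})|k|^{2\gamma}\lesssim 1+|k|^{2(\bs+\gamma)}$, which is immediate since $\bs\ge 0$ and $\gamma\le 2$, yields
\begin{equation*}
\|[e^{\bi\Delta a}-e^{\bi\Delta b}]\psi\|_{H^{\bs}}\lesssim |a-b|^{\gamma/2}\|\psi\|_{H^{\bs+\gamma}}.
\end{equation*}

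Finally, I would distribute $|a-b|^{\gamma/2}$ using the triangle inequality together with the subadditivity of $x\mapsto x^{\gamma/2}$ on $[0,\infty)$ (valid because $\gamma/2\in[0,1]$):
\begin{equation*}
|a-b|^{\gamma/2}\le\bigl(|B(t_1)-B^{\delta,R}(t_1)|+|B(t_2)-B^{\delta,R}(t_2)|\bigr)^{\gamma/2}\le |B(t_1)-B^{\delta,R}(t_1)|^{\gamma/2}+|B(t_2)-B^{\delta,R}(t_2)|^{\gamma/2},
\end{equation*}
which delivers exactly the right-hand side of the lemma. There is no real obstacle: the only subtle point is choosing the correct exponent in the interpolation inequality for $|1-e^{\bi\theta}|$ and then ensuring the loss of $\gamma$ derivatives from the factor $|k|^{\gamma}$ is absorbed by the $H^{\bs+\gamma}$-norm on the right. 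Both are straightforward given the range $\gamma\in[0,2]$.
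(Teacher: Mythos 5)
Your proposal is correct and follows essentially the same route as the paper's own proof: work in Fourier coordinates, bound the symbol difference via the interpolated estimate $|1-e^{\bi\theta}|\lesssim|\theta|^{\gamma/2}$ for $\gamma\in[0,2]$, absorb the resulting factor $|k|^{\gamma}$ into the $H^{\bs+\gamma}$-norm, and split $|a-b|^{\gamma/2}$ by the triangle inequality and subadditivity of $x\mapsto x^{\gamma/2}$. No gaps to report.
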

%{\color{red} this proof is standard, we can move it to SM?} {\color{blue} thank you that sounds like a great idea!}

Under the smallness condition on the homogeneous Sobolev norm, we have the following pathwise error estimate.

\begin{proposition}\label{prop:approx_prop_wong_zakai_periodic} 
Let $T>0$, $\bs \in \mathbb N, \gamma\in [0,2],$  $\bs+\gamma>\frac 12$ and $u_0\in L^2$.
 There exists a small $\epsilon>0$ such that if $\|u_0\|_{\dot H^{\bs+\gamma}}\le \epsilon,$
then  for  \eqref{wong-zakai} with any $\delta\in [0,1)$, $R\in [1,\infty]$ and $N\in \mathbb N^+\cup\{+\infty\}$,
it holds that for all $t\in [0,T],$
\begin{align}\label{path-err-wk}
 &\|u-u^{\delta,R,N}\|_{\mathcal{C}([0,t];H^{\bs})}\\\nonumber
 &\lesssim  \int_0^t (|B(s)-B^{\delta,R}(s)|^{\frac{\gamma}{2}}+|B(t)-B^{\delta,R}(t)|^{\frac{\gamma}{2}})ds + N^{-\gamma}, \;\text{a.s.}
\end{align}

\end{proposition}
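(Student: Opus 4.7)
The strategy is to compare the mild formulations of \eqref{model2} and \eqref{wong-zakai} directly, split the error into five contributions, and close via Gronwall's inequality.

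First I would subtract the two mild equations and decompose
\begin{align*}
u(t)-u^{\delta,R,N}(t) &= \bigl[e^{\bi\Delta B(t)}-e^{\bi\Delta B^{\delta,R}(t)}\bigr]u_0 + e^{\bi\Delta B^{\delta,R}(t)}(I-\pi_N)u_0\\
&\quad + \bi\lambda\int_0^t \bigl[e^{\bi\Delta(B(t)-B(s))}-e^{\bi\Delta(B^{\delta,R}(t)-B^{\delta,R}(s))}\bigr]|u(s)|^2u(s)\,ds\\
&\quad + \bi\lambda\int_0^t e^{\bi\Delta(B^{\delta,R}(t)-B^{\delta,R}(s))}(I-\pi_N)|u(s)|^2u(s)\,ds\\
&\quad + \bi\lambda\int_0^t e^{\bi\Delta(B^{\delta,R}(t)-B^{\delta,R}(s))}\pi_N\bigl[|u(s)|^2u(s)-|u^{\delta,R,N}(s)|^2u^{\delta,R,N}(s)\bigr]ds.
\end{align*}
To estimate each piece in $H^{\bs}$, I would use three ingredients. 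Lemma~\ref{lm-2.1} (applied with $t_2=0$ for the first and with $t_2=s$ for the third term) produces the bounds $|B(t)-B^{\delta,R}(t)|^{\gamma/2}\|u_0\|_{H^{\bs+\gamma}}$ and $\int_0^t\bigl(|B(t)-B^{\delta,R}(t)|^{\gamma/2}+|B(s)-B^{\delta,R}(s)|^{\gamma/2}\bigr)\|u(s)\|_{H^{\bs+\gamma}}^3\,ds$, respectively. The standard Galerkin estimate $\|(I-\pi_N)f\|_{H^{\bs}}\lesssim N^{-\gamma}\|f\|_{H^{\bs+\gamma}}$ combined with the unitarity of $e^{\bi\Delta B^{\delta,R}(t)}$ yields the $N^{-\gamma}$ contributions from the second and fourth terms. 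For the fifth term I invoke the cubic Lipschitz estimate
\begin{equation*}
\bigl\||u|^2u-|v|^2v\bigr\|_{H^{\bs}}\lesssim\bigl(\|u\|_{H^{\bs+\gamma}}^2+\|v\|_{H^{\bs+\gamma}}^2\bigr)\|u-v\|_{H^{\bs}},
\end{equation*}
which follows from the algebra property of $H^{\bs}$ when $\bs\ge1$, and, when $\bs=0$, from pointwise factorization together with the $1$D Sobolev embedding $H^{\bs+\gamma}\hookrightarrow L^\infty$ guaranteed by $\bs+\gamma>\tfrac12$.

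Next I would invoke Theorem~\ref{small-wel} at the elevated regularity $\bs+\gamma$, which is legitimate precisely because the smallness hypothesis $\|u_0\|_{\dot H^{\bs+\gamma}}\le\epsilon$ is now assumed: this delivers a uniform-in-$\delta,R,N$ a priori bound $\|u^{\delta,R,N}\|_{\mathcal C([0,T];H^{\bs+\gamma})}\lesssim\|u_0\|_{H^{\bs+\gamma}}$, and taking $\delta=0,R=N=\infty$ the same bound applies to $u$. Substituting these into the five estimates, the first four contributions are bounded by the right-hand side of \eqref{path-err-wk}, whereas the fifth contributes $C\int_0^t\|u-u^{\delta,R,N}\|_{\mathcal C([0,s];H^{\bs})}\,ds$. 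Gronwall's inequality applied to $F(t):=\|u-u^{\delta,R,N}\|_{\mathcal C([0,t];H^{\bs})}$ then closes the argument, yielding the claimed bound with a multiplicative constant depending only on $T,|\lambda|,\|u_0\|_{H^{\bs+\gamma}}$.

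The main obstacle is ensuring that the cubic Lipschitz estimate closes at the borderline regularity $\bs=0$, where $H^{\bs}=L^2$ is not an algebra; this is overcome by pairing $L^\infty$-bounds on $u$ and $u^{\delta,R,N}$ (delivered by the elevated a priori bound and Sobolev embedding, for which $\bs+\gamma>\tfrac12$ is sharp) with the $H^{\bs}$-norm on the difference. A secondary bookkeeping subtlety is that Theorem~\ref{small-wel} is stated for $\bs\in\mathbb N^+$, so applying it at level $\bs+\gamma$ with non-integer $\gamma$ requires either a fractional-regularity extension of the same fixed-point argument or interpolation between integer levels; both routes rely only on the $1$D algebra property of $H^{\bs+\gamma}$ and the Gagliardo--Nirenberg inequality \eqref{GN-ine}, both available under $\bs+\gamma>\tfrac12$.
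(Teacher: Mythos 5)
Your proposal is correct and follows essentially the same route as the paper: a comparison of the two mild formulations, Lemma~\ref{lm-2.1} for the propagator differences, the Galerkin estimate \eqref{err-galerkin} for the $N^{-\gamma}$ terms, a cubic Lipschitz bound with constants at the level $H^{\bs+\gamma}$ (using $\bs+\gamma>\tfrac12$), the uniform a priori bounds of Theorem~\ref{small-wel}, and Gronwall's inequality; your five-term split is just a slightly finer version of the paper's $I_1+I_2$ decomposition, and placing the projection error on the nonlinearity of $u$ rather than $u^{\delta,R,N}$ is immaterial. Your remark about invoking Theorem~\ref{small-wel} at the possibly non-integer level $\bs+\gamma$ is a fair bookkeeping point that the paper glosses over as well, and your proposed fix (fractional extension or interpolation) is the standard resolution.
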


\begin{proof} 
    By the mild forms  of  $u$ and $u^{\delta,R,N}$, we have 
    \begin{align*}
        &u(t)-u^{\delta,R,N}(t)=e^{\bi B(t)\Delta}u_0-e^{\bi B^{\delta,R}(t)\Delta}\pi_N u_0\\
&+i\lambda\int_{0}^t e^{\bi (B(t)-B(s))\Delta} |u(s)|^{2} u(s) -e^{\bi  (B^{\delta,R}(t)-B^{\delta,R}(s))\Delta}\pi_N |u^{\delta,R,N}(s)|^{2}u^{\delta,R,N}(s) ds\\
&:=I_1+I_2, \;\text{a.s.}
    \end{align*}
Using Lemma \ref{lm-2.1} and the property of $\pi_N,$
\begin{align}\label{err-galerkin}
    \|(I-\pi_N)\psi\|_{H^{\bs}}\lesssim N^{-\gamma }\|\psi\|_{H^{\bs+\gamma}}
\end{align}
 for any $\gamma\ge0$ and $\psi\in H^{\bs+\gamma}$, it follows that 
{\small\begin{align*}
&I_1\le \|e^{\bi B(t)\Delta}u_0-e^{\bi B^{\delta,R}(t)\Delta} u_0\|_{H^{\bs}}+\|e^{\bi  B^{\delta,R}(t)\Delta}(I-\pi_N) u_0\|_{H^{\bs}}\\
&\lesssim |B(t)-B^{\delta,R}(t)|^{\frac \gamma  2}\|u_0\|_{H^{\bs+\gamma}}+N^{-\gamma}\|u_0\|_{H^{\bs+\gamma}}.
\end{align*}}For the nonlinear term, applying the algebra property of $H^{r}$ for any $r>\frac 12$, and letting $\bs+\gamma>\frac 12$, it holds that  
    \begin{align*}
    I_2
        &\leq \Big\|\int_{0}^t [e^{\bi (B(t)-B(s))\Delta} -e^{\bi (B^{\delta,R}(t)-B^{\delta,R}(s))\Delta}] |u(s)|^{2} u(s) ds\Big\|_{H^{\bs}} \\
&+\Big\|\int_{0}^t e^{\bi (B^{\delta,R}(t)-B^{\delta,R}(s))\Delta} (|u(s)|^{2} u(s) -|u^{\delta,R,N}(s)|^{2}u^{\delta,R,N}(s) )ds\Big\|_{H^{\bs}}\\
&+\Big\|\int_{0}^t e^{\bi (B^{\delta,R}(t)-B^{\delta,R}(s))\Delta}(I-\pi_N) |u^{\delta,R,N}(s)|^{2}u^{\delta,R,N}(s) ds\Big\|_{H^{\bs}}\\
        &\leq C [\int_0^t (|B(s)-B^{\delta,R}(s)|^{\frac{\gamma}{2}}+|B(t)-B^{\delta,R}(t)|^{\frac{\gamma}{2}}) ds+t N^{-\gamma}]\\
        &\quad \times \sup_{s\in[0,t]}(1+\|u(s)\|_{H^{r+\gamma}}^{3}+\|u^{\delta, R,N}(s)\|_{H^{\bs+\gamma}}^{3})\\
        &+C t\sup_{s\in[0,t]}(\|u(s)\|_{H^{\bs+\gamma}}^{2}+\|u^{\delta, R,N}(s)\|_{H^{\bs+\gamma}}^{2})\sup_{s\in[0,t]}\|u(s)-u^{\delta,R,N}(s)\|_{H^{\bs}}.
    \end{align*}
    Combining the above estimates, using   Theorem \ref{small-wel} and  Gronwall's inequality,  we obtain the desired result.
\end{proof}

By taking the second moment of \eqref{path-err-wk} and exploiting \eqref{hold-est}, it seems that the mean-square convergence rate of the Wong--Zakai approximation w.r.t $\delta>0$ under the $H^{\bs}$-norm is  $\mathcal O(\sqrt{\delta})$ for $H^{\bs+2}$ data. However, the following  shows that the mean-square convergence rate w.r.t. the width $\delta>0$ can be improved by assuming $H^{\bs+4}$-regularity on the solution. 

\begin{lemma}\label{lm-improv-order}
Let $T>0$, $\bs \in \mathbb N$, $\bs+\gamma>\frac 12$, $\gamma>0$  and $u_0\in L^2$.
 There exists a constant  $\epsilon>0$  such that if $\|u_0\|_{\dot H^{\bs+\gamma}}\le \epsilon,$
then  for  \eqref{wong-zakai} with any $\delta\in [0,1)$, $R\in [R_0,\infty]$ with  $R_0=\max\big(\sqrt{8|\ln(\delta)|}, 2e^{-1}\big),$ and $N\in \mathbb N^+\cup\{+\infty\}$, 
\begin{align*}
 \sup_{t\in [0,T]}\|u(t)-u^{\delta, R,N}\|_{L^2(\Omega; H^{\bs})}\lesssim (\delta^{\max(1,\frac \gamma 4)}+N^{-\gamma}).
\end{align*}
\end{lemma}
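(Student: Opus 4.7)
Using the mild forms of $u$ in \eqref{model2} and $u^{\delta,R,N}$ in \eqref{wong-zakai}, I would decompose the error as
\begin{align*}
u(t)-u^{\delta,R,N}(t)=\bigl(e^{\bi B(t)\Delta}-e^{\bi B^{\delta,R}(t)\Delta}\bigr)u_0+e^{\bi B^{\delta,R}(t)\Delta}(I-\pi_N)u_0+\mathcal J(t),
\end{align*}
where $\mathcal J(t)$ collects the Duhamel-type nonlinear contribution. The Galerkin term is bounded by $N^{-\gamma}\|u_0\|_{H^{\bs+\gamma}}$ via \eqref{err-galerkin}, so the task reduces to extracting a mean-square rate of $\delta^{\max(1,\gamma/4)}$ from the two remaining semigroup-difference contributions.

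A naive approach -- squaring the pathwise bound of Proposition~\ref{prop:approx_prop_wong_zakai_periodic} and applying Lemma~\ref{lm-2.1-wong-zakai} with $p=\gamma$ -- yields only the rate $\delta^{\gamma/4}$, which already suffices in the regime $\gamma\ge 4$. In the complementary regime $\gamma\in(0,4)$, obtaining the floor $\delta^1$ calls for a higher-order Taylor refinement of Lemma~\ref{lm-2.1}: using the elementary bound $|e^{\bi y}-1-\bi y|\lesssim|y|^{\gamma/2}$ for $\gamma\in[2,4]$, one writes
\begin{align*}
e^{\bi\alpha\Delta}\psi-\psi=\bi\alpha\Delta\psi+\mathrm{Rem}(\alpha)\psi,\qquad \|\mathrm{Rem}(\alpha)\psi\|_{H^{\bs}}\lesssim|\alpha|^{\gamma/2}\|\psi\|_{H^{\bs+\gamma}},
\end{align*}
so that the remainder contributes $O(\delta^{\gamma/4})\le O(\delta)$ in the mean-square norm (one iterates the expansion for larger $\gamma$). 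Attention thus focusses on the linear-in-$\alpha$ pieces with $\alpha(t):=B(t)-B^{\delta,R}(t)$, which pointwise satisfy only $\|\alpha(t)\|_{L^2(\Omega)}\lesssim\sqrt\delta$.

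The improvement to $\delta$ for these linear-in-$\alpha$ contributions must come from mean-square cancellation in the time integrals appearing in $\mathcal J(t)$: because $B^{\delta,R}$ coincides with $B^R$ at every grid point, each per-interval quantity $\int_{k\delta}^{(k+1)\delta}\alpha(s)\,ds$ has $L^2(\Omega)$-norm of order $\delta^{3/2}$, and summing the approximately independent contributions over $\lfloor T/\delta\rfloor$ intervals yields $\|\int_0^t\alpha(s)\Psi(s)\,ds\|_{L^2(\Omega;H^{\bs})}\lesssim\delta$ for a sufficiently regular factor $\Psi$. Applying this to the linear-in-$\alpha$ piece of $\mathcal J(t)$, using the mild form of $u^{\delta,R,N}$ to rewrite the pointwise initial-data contribution $\bi\alpha(t)\Delta u_0$ and recombine it with a matching portion of $\mathcal J(t)$ so that the same cancellation applies, and invoking the uniform $H^{\bs+\gamma}$-bound from Theorem~\ref{small-wel} together with the truncation $R\ge R_0$ (which keeps $\{B\ne B^R\}$ of probability $O(\delta^2)$), a standard Gronwall argument closes the estimate. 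The delicate technical core -- and the main obstacle -- is precisely this reorganisation of the pointwise initial-data linear-in-$\alpha$ contribution into an integral quantity to which the Wong--Zakai mean-square cancellation applies, since without it the bound would stall at $O(\sqrt\delta\|u_0\|_{H^{\bs+2}})$ and the claimed floor $\delta^1$ could not be reached.
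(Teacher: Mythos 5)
Your plan does not close the error budget in either regime, so the key improvement is missing. For $\gamma\in(0,4)$ the inequality ``$O(\delta^{\gamma/4})\le O(\delta)$'' is backwards: for $\delta\in(0,1)$ one has $\delta^{\gamma/4}\ge\delta$, so the Taylor remainder $\mathrm{Rem}(\alpha)\psi$ is itself the dominant contribution and your argument stalls at the naive rate $\delta^{\gamma/4}$ precisely in the regime where the floor $\delta^{1}$ is claimed (moreover, for $\gamma<2$ the linear term $\bi\alpha\Delta u_0$ is not even controlled by $H^{\bs+\gamma}$ data, and ``iterating the expansion for larger $\gamma$'' goes in the wrong direction -- large $\gamma$ is the easy case). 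In the regime $\gamma\ge4$ the premise is also flawed: Lemma~\ref{lm-2.1} and Proposition~\ref{prop:approx_prop_wong_zakai_periodic} are stated only for $\gamma\in[0,2]$, because $|e^{\bi a}-1|\lesssim|a|^{\gamma/2}$ fails for $\gamma>2$ and small $|a|$; since pathwise $|B(s)-B^{\delta,R}(s)|\asymp\sqrt\delta$ inside each interval, every ``square the pathwise bound'' route saturates at $\sqrt\delta$ no matter how smooth $u_0$ is (the paper makes exactly this remark before Lemma~\ref{lm-improv-order}). Any rate beyond $\sqrt\delta$ must come from stochastic cancellation applied to the full phase differences, not only to the first-order-in-$\alpha$ pieces isolated pathwise.

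This is where the paper's proof differs structurally from yours: it never Taylor-expands pathwise and does not work with a single global Duhamel comparison. Instead it runs a discrete mean-square recursion on the Wong--Zakai grid $t_n=n\delta$, comparing the exact and Wong--Zakai flows over one step, expanding $\E[E_{n+1}^2]$ into a squared local term, a stability term and a cross term, and gaining the extra power of $\delta$ by taking $\E[\,\cdot\,|\mathcal F_{t_n}]$ of the Fourier phase differences inside both the squared local term and the cross term: the conditional mean of $e^{\bi(\text{increment})\phi}-e^{\bi(\text{WZ increment})\phi}$ is one order in $\delta$ smaller than its pathwise size, at the cost of higher frequency powers which are then traded against the pathwise bound by interpolation so that only $\gamma$ extra derivatives are charged; Gronwall on the grid, together with the truncation $R\ge R_0$ (which makes the event $\{B\ne B^R\}$ contribute only $O(\delta^{2})$ via \eqref{err-trun}), closes the estimate, and the Galerkin part gives $N^{-\gamma}$ as you say. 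This grid-based structure also sidesteps the obstacle you correctly flag but do not resolve -- and which in fact cannot be resolved as you pose it: at an off-grid time such as $t=\delta/2$ the free-flow discrepancy $(e^{\bi\alpha(t)\Delta}-I)u_0$, with $\alpha(t)$ a Brownian-bridge deviation of variance $\sim\delta$, has mean-square size of order $\sqrt\delta\,\|u_0\|_{\dot H^{\bs+2}}$ all by itself, so no recombination with the Duhamel integral can average it away; the estimate that the paper actually establishes (and the one used in Corollary~\ref{cor-1}, where $t_n/\delta\in\mathbb Z$) lives on the grid points, where $B^{\delta,R}(t_n)=B^{R}(t_n)$ agrees with $B(t_n)$ up to an exponentially small truncation event.
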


Since the proof of this statement relies on similar arguments to the ones used in improving the mean-square convergence order of the numerical scheme in section \ref{sec-3}, we present its proof in Appendix~\ref{app:proof_lm-improv-order}.

\section{Low-regularity resonance-based schemes}
\label{sec-3}
Based on the above analysis we now aim to construct a resonance-based numerical scheme directly for the Wong--Zakai approximation \eqref{wong-zakai} rather than for the original NLSE. We follow roughly the analysis in \cite{ostermann2018low,maierhofer2023bridging}. We will consider the behaviour of the scheme in both pathwise and mean-square sense. Let $\tau>0$ be the time stepsize. Moreover for notational simplicity we assume that $t_n/\delta\in\mathbb{Z}$, $T/\tau=K\in \mathbb N^+$ for all timesteps $t_n=n\tau, n\in\mathbb{Z}, n\le K,$ throughout this paper. We note that the construction and analysis of the resonance-based integrator can be performed analogously even when this is not the case.

Let us consider the twisted variable $v^{\delta,R,N} (t)=e^{-\bi B^{\delta,R}(t)\Delta}u^{\delta,R,N}(t)$, $t\in [0,T],$ for \eqref{wong-zakai}. It can be seen that $v^{\delta,R,N}$  satisfies
\begin{align}\label{eqn:twisted_nls_w_wong-zakai_approx}
dv^{\delta,R,N}=\bi\lambda e^{-\bi B^{\delta,R} (t)\Delta}\pi_{N}\left(\left|e^{\bi B^{\delta,R}(t)\Delta}v^{\delta,R,N}(t)\right|^2e^{\bi B^{\delta,R}(t)\Delta}v^{\delta,R,N}(t)\right)dt.
\end{align}
Following \cite{ostermann2018low} we can then construct a low-regularity integrator as follows. We begin by considering the integral form of \eqref{eqn:twisted_nls_w_wong-zakai_approx}% 
%We then iterate this formula once, to obtain
%\begin{align*}
%    u^{\delta,N}_{\omega}(t_n+\tau)&=e^{\bi (W_\omega^\delta(t_n+\tau)-W_\omega^\delta (t_n))\Delta}u^{\delta,N}_{\omega}(t_n)\\
%    &\quad+\bi\lambda e^{\bi W_\omega^\delta(t_n+\tau)\Delta}\int_{t_n}^{t_n+\tau}e^{-\bi W_\omega^\delta(s)\Delta}\pi_{N}\left(\left|e^{\bi (W_\omega^\delta(s)-W_\omega^\delta (t_n))\Delta}u^{\delta,N}_{\omega}(t_n)\right|^2e^{\bi (W_\omega^\delta(s)-W_\omega^\delta (t_n))\Delta}u^{\delta,N}_{\omega}(t_n)\right) ds\\
%    &\quad-\lambda^2\int_{t_n}^{t_n+\tau}e^{-\bi W_\omega^\delta(s)\Delta}\pi_{N}\left(\left|e^{\bi (W_\omega^\delta(s)-W_\omega^\delta (t_n))\Delta}u^{\delta,N}_{\omega}(t_n)\right|^2e^{\bi W_\omega^\delta(t_n+\tau)\Delta}\int_{t_n}^{t_n+s}e^{-\bi W_\omega^\delta(\tilde{s})\Delta}\pi_{N}\left(\left|u^{\delta,N}_{\omega}(\tilde{s})\right|^2u^{\delta,N}_{\omega}(\tilde{s})\right) d\tilde{s}\right) ds
%\end{align*}
\begin{align}\begin{split}\label{eqn:Duhamel_twisted}
&v^{\delta,R,N}(t_n+\tau)=v^{\delta,R,N}(t_n)\\
&+\bi\lambda \int_{t_n}^{t_n+\tau} e^{-\bi B^{\delta,R} (s)\Delta}\pi_{N}\left(\left|e^{\bi B^{\delta,R}(s)\Delta}v^{\delta,R,N}(s)\right|^2e^{\bi B^{\delta,R}(s)\Delta}v^{\delta,R,N}(s)\right)ds.\end{split}
\end{align}
In Fourier coordinates this can be written as follows, for $k\in\mathbb{Z},|k|\leq N$:
\begin{align*}
    (&\widehat{v^{\delta,R,N}})_{k}(t_n+\tau)=(\widehat{v^{\delta,R,N}})_{k}(t_n)+\sum_{\substack{k+k_1=k_2+k_3\\|k|,|k_1|,|k_2|,|k_3|\leq N}}\bi\lambda\int_{t_n}^{t_n+\tau}e^{\bi B^{\delta,R} (s)(k^2+k_1^2-k_2^2-k_3^2)} \\
&\quad \times \overline{(\widehat{v^{\delta,R,N}})_{k_1}(s)}(\widehat{v^{\delta,R,N}})_{k_2}(s)(\widehat{v^{\delta,R,N}})_{k_3}(s)ds, 
\end{align*}
which can equivalently be written as
\begin{align*}
    &(\widehat{v^{\delta,R,N}})_{k}(t_n+\tau)=(\widehat{v^{\delta,R,N}})_{k}(t_n)+\bi\lambda\!\!\!\!\sum_{\substack{k+k_1=k_2+k_3\\|k|,|k_1|,|k_2|,|k_3|\leq N}}e^{\bi B^{\delta,R} (t_n)(k^2+k_1^2-k_2^2-k_3^2)}\\
    &\underbrace{\int_{0}^{\tau} e^{\bi(B^{\delta,R} (t_n+s)-B^{\delta,R}(t_n))(k^2+k_1^2-k_2^2-k_3^2)}\overline{(\widehat{v^{\delta,R,N}})_{k_1}(s)}(\widehat{v^{\delta,R,N}})_{k_2}(s)(\widehat{v^{\delta,R,N}})_{k_3}(s)ds}_{=:\mathcal T_1}.
\end{align*}
On the surface $k+k_1=k_2+k_3$ we can simplify the exponent in the aforementioned construction using the ideas developed in \cite{ostermann2018low} namely,
\begin{align}\label{eqn:standard_approx_resonances}
    k^2+k_1^2-k_2^2-k_3^2&=2k_1^2+2k_2k_3-2k_1(k_2+k_3).
\end{align}
\eqref{eqn:standard_approx_resonances} was used in \cite{ostermann2018low} in the deterministic case to construct the first explicit low-regularity integrator for the NLSE. Following this construction in our case we have
\begin{align*}
    \mathcal T_1&\approx\int_{0}^{\tau} e^{2\bi k_1^2(B^{\delta,R}(t_n+s)-B^{\delta,R} (t_n))}\overline{(\widehat{v^{\delta,R,N}})_{k_1}(s)}(\widehat{v^{\delta,R,N}})_{k_2}(s)(\widehat{v^{\delta,R,N}})_{k_3}(s)ds\\
    &\approx\int_{0}^{\tau} e^{2\bi k_1^2(B^{\delta,R} (t_n+s)-B^{R,\delta }(t_n))}ds \ \overline{(\widehat{v^{\delta,R,N}})_{k_1}(t_n)}(\widehat{v^{\delta,R,N}})_{k_2}(t_n)(\widehat{v^{\delta,R,N}})_{k_3}(t_n).
\end{align*}
At this point we ``twist'' back to the coordinate $u^{\delta,R,N}(t)=e^{\bi B^{\delta,R}(t)\Delta}v^{\delta,R, N}(t)$ which leads to the following numerical method
{\small\begin{align}\begin{split}\label{eqn:construction_first_order_lri}  (&\widehat{u^{\delta,R,N}_{n+1}})_{k}=e^{-\bi(B^{\delta,R}(t_n+\tau)-B^{\delta,R}(t_n)) k^2}(\widehat{u_n^{\delta,R,N}})_{k}+e^{-\bi(B^{\delta,R}(t_n+\tau)-B^{\delta,R}(t_n)) k^2}\\
     &\quad\times \hspace{-0.5cm}\sum_{\substack{k+k_1=k_2+k_3\\|k|,|k_1|,|k_2|,|k_3|\leq N}}\hspace{-0.5cm}\bi\lambda\underbrace{\int_{0}^{\tau} e^{2\bi k_1^2(B^{\delta,R} (t_n+s)-B^{\delta,R} (t_n))}ds}_{=:I_{n,\tau}(k_1^2)} \ \overline{(\widehat{u_n^{\delta,R,N}})_{k_1}}(\widehat{u_{n}^{\delta,R,N}})_{k_2}(\widehat{u_n^{\delta,R,N}})_{k_3}.
     \end{split}
\end{align}}
 
In the case of the piecewise linear Wong--Zakai approximation \eqref{eqn:linear_WZ_approx} the remaining integrals can be computed exactly and can be expanded as follows
\begin{align*}
    e^{2\bi k_1^2B^{\delta,R} (t_n)}I_{n,\tau}(k_1^2)&=\sum_{\ell=0}^{\tau/\delta-1}\int_{t_n+\ell \delta}^{t_n+(\ell+1)\delta} e^{2\bi k_1^2B^{\delta,R} (s)}ds.
\end{align*}
On each of those subintervals the Wong--Zakai approximation is an affine function. Therefore the integrals can be computed exactly and are equal to
\begin{align*}
    \int_{a}^{b} e^{2\bi k_1^2B^{\delta,R} (s)}ds=e^{2\bi k_1^2B^{\delta,R}(a)}(b-a)\varphi_1(2\bi k_1^2(B^{\delta,R}(b)-B^{\delta,R}(a))),
\end{align*}
where $\varphi_1(z)=(e^z-1)/z$ for $z\neq 0$ and $\varphi_1(0)=1$. Thus we have
\begin{align*}
&e^{2\bi k_1^2B^{\delta,R}(t_n)}I_{n,\tau}(k_1^2)\\
&=\delta\sum_{\ell=0}^{\tau/\delta-1}e^{2\bi k_1^2 B^{\delta,R}(t_n+\ell\delta)}\varphi_1(2\bi k_1^2 (B^{\delta,R}(t_n+(\ell+1)\delta)-B^{\delta,R}(t_n+\ell\delta))).
\end{align*}
This expression together with \eqref{eqn:construction_first_order_lri} allows us to define the following resonance-based scheme for \eqref{model2} (SDLRI - stochastic dispersion low-regularity integrator):
\begin{align}\begin{split}\label{eqn:1st_order_lri}
    u_{n+1}^{\delta,R,N}&=e^{\bi (B^{\delta,R}(t_n+\delta)-B^{\delta,R}(t_n))\Delta}u_n^{\delta,R,N}\\
&\quad\quad+\bi\lambda e^{\bi (B^{\delta,R}(t_n+\delta)-B^{\delta,R}(t_n))\Delta}\pi_N\left(\left(u_n^{\delta,R,N}\right)^2I_{n,\tau}(-\Delta)\overline{u_n^{\delta,R,N}}\right).\end{split}
\end{align}

\begin{remark}
    Note in the above we exploited the piecewise linear nature of our Wong--Zakai approximation \eqref{eqn:linear_WZ_approx}. However, we note that the integrals $I_{n,\tau}(k_1^2)$ could also be computed with a Levin method \cite{levin1982procedures,iserles2024accelerated} in more general Wong--Zakai approximations. In particular, the integral
    \begin{align*}
 \int_{t_n+\ell\delta}^{t_n+(\ell+1)\delta}e^{2\bi k_1^2B^{\delta,R}(s)}ds
    \end{align*}
    can be written as
    \begin{align*} \int_{t_n+\ell\delta}^{t_n+(\ell+1)\delta}e^{2\bi k_1^2B^{\delta,R}(s)}ds=e^{2\bi k_1^2B_{R,\delta}^\delta(t_{n}+(\ell+1)\delta)}a(\delta t)-e^{2\bi k_1^2 B^{\delta,R}(t_{n}+\ell\delta)}a(0),
    \end{align*}
    where $a$ satisfies the following SDE
    \begin{align*}
    d a(s)+2\bi k_1^2a(s)\circ d \tilde B^{\delta,R}(s)=ds,
\end{align*}
where $\tilde B^{\delta,R}(s)= B^{\delta,R}(t_n+\ell \delta+s)- B^{\delta,R}(t_n+\ell \delta)$ and $s\in [0,\delta].$
This equation is less stiff than the original PDE and thus permits alternative numerical solution methodologies. In the interest of brevity we do not study this approach further here.
\end{remark}
\subsection{Pathwise convergence analysis}\label{path-subsec}
Having designed our resonance-based scheme \eqref{eqn:1st_order_lri} we now seek to study the convergence properties of the scheme, firstly with respect to the Wong--Zakai approximation \eqref{wong-zakai} in the pathwise sense and later in the mean-square sense. We follow the usual Lax equivalence paradigm.

\subsubsection{Stability}
Let us denote the scheme \eqref{eqn:1st_order_lri} by the nonlinear map $\Phi_{n,\tau}:u^{\delta,R,N}_n\mapsto u^{\delta,R,N}_{n+1}$, then we have the following stability estimate.
\begin{proposition}\label{prop:pathwise_stability}
    Let $\bs \in \mathbb N$, $\gamma\ge 0$ and $\bs+\gamma>\frac 12$.
There exists a strictly increasing function $C:\mathbb{R}_{\geq0}\times \mathbb{R}_{\geq0}\rightarrow\mathbb{R}_{\geq0}$ such that for any $\delta\in (0,1)$, $R\in [1,+\infty]$, $N\in \mathbb N^+\cup\{+\infty\}$, all $0<\tau=\frac {T}{K}$, $n\le K-1, n\in \mathbb{N}$, and any $z,w\in \pi_N L^2$, we have that
    \begin{align*}   \|\Phi_{n,\tau}(w)-\Phi_{n,\tau}(z)\|_{H^{\bs}}\leq \Big(1+\tau C(\|w\|_{H^{\bs+\gamma}},\|z\|_{H^{\bs+\gamma}})\Big)\|w-z\|_{H^\bs}, \text{a.s.}
    \end{align*}
\end{proposition}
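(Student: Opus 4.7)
The plan is to decompose $\Phi_{n,\tau}$ into a linear $H^\bs$-isometry and an $O(\tau)$ nonlinear remainder. Let $U_n$ denote the Fourier multiplier $e^{\bi (B^{\delta,R}(t_n+\tau) - B^{\delta,R}(t_n))\Delta}$, whose symbol has modulus $1$ on every mode and which therefore acts as a unitary operator on every $H^\bs$; set $\mathcal N(u) := u^2\, I_{n,\tau}(-\Delta)\bar u$ so that $\Phi_{n,\tau}(u) = U_n u + \bi\lambda U_n\pi_N\mathcal N(u)$. Since $\pi_N$ is a contraction on $H^\bs$, we obtain
$$\|\Phi_{n,\tau}(w) - \Phi_{n,\tau}(z)\|_{H^\bs} \le \|w-z\|_{H^\bs} + |\lambda|\, \|\mathcal N(w) - \mathcal N(z)\|_{H^\bs},$$
so the task reduces to a Lipschitz estimate for $\mathcal N$.

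The essential observation is that the Fourier multiplier $I_{n,\tau}(-\Delta)$ has symbol satisfying $|I_{n,\tau}(k^2)| \le \int_0^\tau 1\, ds = \tau$ uniformly in $k$, hence is an $O(\tau)$-bounded operator on every $H^r$. Combined with the telescoping identity
$$\mathcal N(w) - \mathcal N(z) = (w-z)(w+z)\, I_{n,\tau}(-\Delta)\bar w + z^2\, I_{n,\tau}(-\Delta)(\overline{w-z}),$$
this reduces matters to two trilinear $H^\bs$ estimates. For $\bs \ge 1$, I would apply the Moser / Kato--Ponce inequality $\|fg\|_{H^\bs} \lesssim \|f\|_{H^\bs}\|g\|_{L^\infty} + \|f\|_{L^\infty}\|g\|_{H^\bs}$ twice, and use the Sobolev embedding $H^\bs \hookrightarrow L^\infty$ to upgrade any residual $\|w-z\|_{L^\infty}$ term to $\|w-z\|_{H^\bs}$, while bounding all other $L^\infty$ norms by $H^{\bs+\gamma}$ norms of $w,z$. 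This yields a bound of the form $\tau\tilde C(\|w\|_{H^{\bs+\gamma}},\|z\|_{H^{\bs+\gamma}})\|w-z\|_{H^\bs}$ with $\tilde C$ polynomial in its arguments.

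The main obstacle is the case $\bs = 0$, where $H^0 = L^2 \not\hookrightarrow L^\infty$ and Kato--Ponce is vacuous, so the Sobolev upgrade trick fails. The remedy is to use the elementary trilinear estimate $\|fgh\|_{L^2} \le \|f\|_{L^2}\|g\|_{L^\infty}\|h\|_{L^\infty}$ directly, and to always place the $w-z$ factor in the $L^2$ slot: explicitly, take $f = w-z$ for the first telescoping term and $f = I_{n,\tau}(-\Delta)(\overline{w-z})$ (which is bounded in $L^2$ by $\tau\|w-z\|_{L^2}$) for the second, while $L^\infty$ norms of the remaining factors are controlled by $H^\gamma$ norms via the embedding $H^\gamma \hookrightarrow L^\infty$ (guaranteed by $\gamma > 1/2$, forced by $\bs+\gamma > 1/2$ when $\bs=0$). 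Collecting both cases, one obtains $\|\mathcal N(w)-\mathcal N(z)\|_{H^\bs} \le \tau\tilde C(\|w\|_{H^{\bs+\gamma}},\|z\|_{H^{\bs+\gamma}})\|w-z\|_{H^\bs}$ with $\tilde C$ a strictly increasing polynomial, which combined with the initial reduction gives the desired bound almost surely.
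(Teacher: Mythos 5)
Your proposal is correct and follows essentially the same route as the paper: strip off the unitary propagator (and $\pi_N$), use $|I_{n,\tau}(k_1^2)|\le\tau$ a.s., and then establish a trilinear Lipschitz estimate under $\bs+\gamma>\frac 12$. The paper performs this last step directly on the absolute values of the Fourier coefficients via a standard bilinear/majorant estimate, whereas you telescope in physical space and invoke Kato--Ponce and Sobolev embeddings with a case split $\bs\ge 1$ versus $\bs=0$ — the same content in a slightly different presentation.
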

\begin{proof} Firstly notice that for every $k_1\in\mathbb{Z}$ we have $|I_{n,\tau}(k_1^2)|\leq \tau,\,\,\text{a.s.}$, thus we have in Fourier coordinates
\begin{align*}
    \left|\widehat{\left(\Phi_{n,\tau}(z)\right)_k}-\widehat{\left(\Phi_{n,\tau}(w)\right)_k}\right|\leq |\hat{z}_k-\hat{w}_k|+\tau \sum_{\substack{k+k_1=k_2+k_3\\|k|,|k_1|,|k_2|,|k_3|\leq N}}\left|\overline{\hat{z}_{k_1}}\hat{z}_{k_2}\hat{z}_{k_3}-\overline{\hat{w}_{k_1}}\hat{w}_{k_2}\hat{w}_{k_3}\right|.
\end{align*}
Therefore, for $\bs+\gamma>1/2$, there is a  polynomial $C(\cdot,\cdot)>0$ of degree 2 such that
\begin{align*}
   \| \Phi_{n,\tau}(z)-\Phi_{n,\tau}(w)\|_{H^\bs}&\leq \|z-w\|_{H^{\bs}}+\tau  \|z-w\|_{H^{\bs}}C(\|w\|_{H^{\bs+\gamma}},\|z\|_{H^{\bs+\gamma}}), \,\,\text{a.s.}
\end{align*}
which immediately implies the desired result.
\end{proof}

\subsubsection{Local error}
 Let us denote by $\varphi_{t_a,t_b}(u_a)$ the exact flow of \eqref{wong-zakai} over the interval $t\in[t_{a},t_{b}]$ with the initial condition $u_a$ at $t=t_a$. 
 Note that if $u_a$ is $\mathcal F_{t_a}$-measurable, then $\varphi_{t_a,t_b}(u_a)$ is $\mathcal F_{t_b}$-measurable by the definition of the mild solution.
 Then we have the following local error estimate.
\begin{proposition}\label{prop:pathwise_local_error}
   Let $\bs \in \mathbb N$, $\gamma\in [0,1],$ and $\bs+\gamma>\frac 12$. There exists a strictly increasing function $M:\mathbb{R}_{\geq 0}\rightarrow \mathbb{R}_{\geq 0}$ such that for  any $\delta\in (0,1)$, $R\in [1,+\infty]$, $N\in \mathbb N^+\cup\{+\infty\}$, all $0<\tau=\frac {T}{K}$, $n\le K-1, n\in \mathbb{N}$, and any initial condition $z\in \pi_{N}L^2,$ we have that 
    \begin{align*}
&\|\varphi_{t_n,t_{n}+\tau}(z)-\Phi_{n,\tau}(z)\|_{H^\bs}\\
&\leq \left( \int_{0}^{\tau}|B^{\delta,R}(s+t_n)-B^{\delta,R}(t_n)|^{\gamma} ds+\tau^2\right)M\!\!\left(\sup_{t\in [0,\tau]}\|\varphi_{t_n,t_{n}+t}(z)\|_{H^{\bs+\gamma}}\right),\,\,\text{a.s.}
    \end{align*}
\end{proposition}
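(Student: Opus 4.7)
The plan is to analyze the local error by comparing the exact flow $\varphi_{t_n,t_n+\tau}(z)$ and the numerical step $\Phi_{n,\tau}(z)$ in the twisted variable $v^{\delta,R,N}(t)=e^{-\bi B^{\delta,R}(t)\Delta}u^{\delta,R,N}(t)$. After twisting, the local error splits into two contributions, exactly matching the two approximations made in the construction of the scheme in \eqref{eqn:construction_first_order_lri}: (i) approximating the full quartic phase $k^2+k_1^2-k_2^2-k_3^2$ on the resonance surface $k+k_1=k_2+k_3$ by the dominant part $2k_1^2$ via \eqref{eqn:standard_approx_resonances}, and (ii) freezing the integrand in \eqref{eqn:Duhamel_twisted} at $s=t_n$.

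To bound contribution (i), I write the integrand difference in Fourier coordinates as
\begin{align*}
    \bigl(e^{\bi(B^{\delta,R}(t_n+s)-B^{\delta,R}(t_n))(k^2+k_1^2-k_2^2-k_3^2)}-e^{2\bi k_1^2(B^{\delta,R}(t_n+s)-B^{\delta,R}(t_n))}\bigr)\overline{\hat v_{k_1}(s)}\hat v_{k_2}(s)\hat v_{k_3}(s),
\end{align*}
and apply the elementary Hölder bound $|e^{\bi a}-e^{\bi b}|\le 2|a-b|^{\gamma}$, valid for any $\gamma\in[0,1]$. On the resonance surface this yields a prefactor $|B^{\delta,R}(t_n+s)-B^{\delta,R}(t_n)|^\gamma\,|2k_2k_3-2k_1(k_2+k_3)|^{\gamma}$, and I distribute the frequency weight via the inequality $|2k_2k_3-2k_1(k_2+k_3)|^{\gamma}\lesssim |k_1|^{2\gamma}+|k_2|^{2\gamma}+|k_3|^{2\gamma}$ so as to place at most $2\gamma$ extra derivatives on any one of the three factors. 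Summing over the resonance set and using the algebra property of $H^{r}$ for $r=\bs+\gamma>\tfrac12$ then bounds the trilinear term by $\|v^{\delta,R,N}(s)\|_{H^{\bs+\gamma}}^{3}$, which yields precisely the $\int_0^\tau |B^{\delta,R}(s+t_n)-B^{\delta,R}(t_n)|^{\gamma}\,ds$ factor in the claim.

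For contribution (ii), I replace $v^{\delta,R,N}(s)$ by $v^{\delta,R,N}(t_n)$ in the integrand; the remaining $\tau$-integral involves the numerical approximation $I_{n,\tau}(k_1^2)$ exactly (this is how the scheme is constructed), so no further approximation error arises here. The error is thus controlled by $\int_0^\tau \|v^{\delta,R,N}(s)-v^{\delta,R,N}(t_n)\|_{H^{\bs}}\,ds$ multiplied by a trilinear Sobolev factor. From \eqref{eqn:twisted_nls_w_wong-zakai_approx} and the unitarity of $e^{\bi B^{\delta,R}\Delta}$, together with the algebra property of $H^{\bs+\gamma}$, the increment $\|v^{\delta,R,N}(s)-v^{\delta,R,N}(t_n)\|_{H^{\bs}}\lesssim (s-t_n)\,\|v^{\delta,R,N}\|_{\mathcal C([t_n,t_n+\tau];H^{\bs+\gamma})}^{3}$ is of order $s-t_n$, so integration over $[0,\tau]$ produces the $\tau^2$ contribution. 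Finally, twisting back, absorbing the $\pi_N$ projector (which is a bounded projection on $H^{\bs}$), and replacing the Sobolev norm of $v^{\delta,R,N}$ by the Sobolev norm of $u^{\delta,R,N}=\varphi_{t_n,t_n+\cdot}(z)$ via unitarity delivers the stated bound with a monotone polynomial $M$.

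The main obstacle will be step (i): ensuring that the interpolation inequality $|e^{\bi a}-e^{\bi b}|\le 2|a-b|^\gamma$ can be combined with the frequency-splitting $|2k_2k_3-2k_1(k_2+k_3)|^\gamma\lesssim\sum_j|k_j|^{2\gamma}$ so that exactly $\gamma$, rather than $2$, extra derivatives are needed on the initial data. This trade-off between regularity in the Brownian path (through the factor $|B^{\delta,R}(t_n+s)-B^{\delta,R}(t_n)|^{\gamma}$) and regularity in the solution is the essential low-regularity mechanism that the resonance-based construction of \eqref{eqn:1st_order_lri} is designed to exploit, and care is needed when summing over the quartic resonance set to avoid picking up additional frequency loss.
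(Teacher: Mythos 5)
Your overall strategy --- twisting, splitting the local error into the phase-approximation error and the error from freezing the integrand at $t_n$, bounding the first through a H\"older-type estimate on the exponential difference and the second through a Lipschitz-in-time bound of size $\tau^2$ --- is essentially the paper's argument; the paper merely performs the two approximations in the opposite order (it first iterates Duhamel's formula to freeze the solution, with an $O(\tau^2)$ remainder $\mathcal R(z)$ bounded by the fifth power of the $H^{\bs+\gamma}$ norm, and then compares the phases in Fourier coordinates). Your step (ii) is fine and matches \eqref{eqn:estimate_on_R}.

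The genuine gap is in step (i), precisely at the point you yourself flag as the main obstacle. After applying $|e^{\bi a}-e^{\bi b}|\le 2|a-b|^{\gamma}$ you distribute the frequency weight via $|2k_2k_3-2k_1(k_2+k_3)|^{\gamma}\lesssim |k_1|^{2\gamma}+|k_2|^{2\gamma}+|k_3|^{2\gamma}$, i.e.\ you concentrate $2\gamma$ derivatives on a single factor. A term such as $\sum_{k+k_1=k_2+k_3}|k_1|^{2\gamma}|\hat z_{k_1}||\hat z_{k_2}||\hat z_{k_3}|$ measured in $H^{\bs}$ is controlled by $\|z\|_{H^{\bs+2\gamma}}\|z\|_{H^{\bs+\gamma}}^{2}$, not by $\|z\|_{H^{\bs+\gamma}}^{3}$: there is no mechanism for trading $2\gamma$ derivatives on one factor for $\gamma$ derivatives on each of three factors, so the asserted conclusion of step (i) does not follow, and as written you would only obtain the proposition with $H^{\bs+2\gamma}$ data (equivalently, with $|B^{\delta,R}|^{\gamma/2}$ in place of $|B^{\delta,R}|^{\gamma}$ for $H^{\bs+\gamma}$ data). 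The whole point of keeping the dominant phase $2k_1^2$ exact and approximating only $2k_2k_3-2k_1(k_2+k_3)$ is that the discarded phase factors into products of \emph{distinct} frequencies; the fix is to keep that product structure, $|k_ik_j|^{\gamma}=|k_i|^{\gamma}|k_j|^{\gamma}$, which places $\gamma$ derivatives on each of two different factors, and then to conclude with a multiplication estimate of the form $\|fgh\|_{H^{\bs}}\lesssim \|f\|_{H^{\bs+\gamma}}\|g\|_{H^{\bs+\gamma}}\|h\|_{H^{\bs+\gamma}}$ for $\bs\in\mathbb N$, $\bs+\gamma>\tfrac12$ (note that the plain algebra property of $H^{\bs+\gamma}$ you invoke is not the right tool when $\bs<\tfrac12$, e.g.\ $\bs=0$). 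This is exactly what the paper's ``standard interpolation argument'' accomplishes: interpolating $|e^{\bi x}-1|\le\min(2,|x|)$ between the trivial bound and \eqref{eqn:convolutional_kernel_estimate} yields the weight $\left(|k_2||k_3|+|k_1||k_2|+|k_1||k_3|\right)^{\gamma}\lesssim |k_2|^{\gamma}|k_3|^{\gamma}+|k_1|^{\gamma}|k_2|^{\gamma}+|k_1|^{\gamma}|k_3|^{\gamma}$ and hence the stated $\|z\|_{H^{\bs+\gamma}}^{3}$ bound.
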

\begin{proof}%[Proof of Prop.~\ref{prop:pathwise_local_error}]
We aim to compare the mild form of the solution (after twisting back in \eqref{eqn:Duhamel_twisted}) and the numerical scheme \eqref{eqn:1st_order_lri}. The mild form of \eqref{wong-zakai} is:
\begin{align}\begin{split}\label{eqn:Duhamel_WZ}
    &\varphi_{t_n,t_{n}+s}(z)=e^{\bi (B^{\delta,R}(t_n+\tau)-B^{\delta,R} (t_n))\Delta}z\\
    &\quad+\bi\lambda e^{\bi B^{\delta,R}(t_n+\tau)\Delta}\int_{t_n}^{t_n+\tau}e^{-\bi B^{\delta,R}(s)\Delta}\pi_{N}\left(\left|\varphi_{t_n,t_{n}+s}(z)\right|^2\varphi_{t_n,t_{n}+s}(z)\right) ds.\end{split}
\end{align}
%\begin{align}\begin{split}\label{eqn:Duhamel_WZ}
%    &\varphi_{t_n,t_{n}+\tau,\omega}(z)=e^{\bi (W_{\omega}^{\delta}(t_n+\tau)-W_{\omega}^{\delta}(t_n))\Delta}z\\
%    &\quad+\bi\lambda \int_{t_n}^{t_n+\tau} e^{-iW_\omega^\delta (s)\Delta}\pi_{N}\left(\left|e^{iW_{\omega}^{\delta,N}(s)\Delta}v_{\omega}^{\delta,N}(s)\right|^2e^{iW_{\omega}^\delta(s)\Delta}v_{\omega}^{\delta,N}(s)\right)ds\end{split}
%\end{align}
This immediately implies, noting that $e^{\bi B^{\delta,R}(\cdot)\Delta}$ is an isometry on $H^{\bs}$ and using standard bilinear estimates, for any $s\in [0,\tau],$
\begin{align*}
\|\varphi_{t_n,t_{n}+s}(z)-e^{\bi (B^{\delta,R}(t_n+s)-B^{\delta,R}(t_n))\Delta}z\|_{H^\bs}\leq \tau |\lambda|\sup_{t\in[0,\tau]}\|\varphi_{t_n,t_n+t}(z)\|_{H^{\bs+\gamma}}^3.
\end{align*}
Therefore we find by iteration in \eqref{eqn:Duhamel_WZ}
\begin{align*}
&\varphi_{t_n,t_{n}+\tau}(z)=e^{\bi (B^{\delta,R}(t_n+\tau)-B^{R,\delta }(t_n))\Delta}z+\bi\lambda e^{\bi B^{R,\delta }(t_n+\tau)\Delta}\int_{t_n}^{t_n+\tau}e^{-\bi B^{R,\delta } (s)\Delta}\\ &\quad\times \pi_{N}\!\!\left(\left|e^{\bi (B^{R,\delta }(t_n+\tau)-B^{R,\delta }(t_n))\Delta}z\right|^2e^{\bi (B^{R,\delta }(t_n+\tau)-B^{R,\delta } (t_n))\Delta}z\right) ds+\mathcal{R}(z),
\end{align*}
where
\begin{align}\label{eqn:estimate_on_R}
    \|\mathcal{R}(z)\|_{H^\bs}\lesssim \tau^2\sup_{t\in[0,\tau]}\|\varphi_{t_n,t_n+t}(z)\|_{H^{\bs+\gamma}}^5.
\end{align}
 We can now compare $\varphi_{t_n,t_{n}+\tau}(z),\Phi_{n,\tau}(z)$ in Fourier coordinates:
\begin{align}\begin{split}\label{eqn:local_error_Fourier}
    &\widehat{(\varphi_{t_n,t_{n}+\tau}(z))}_{k}-\widehat{(\Phi_{n,\tau}(z))}_{k}-\widehat{\mathcal{R}(z)}_k\\
    &=\bi\lambda\!\!\!\!\sum_{\substack{k+k_1=k_2+k_3\\|k|,|k_1|,|k_2|,|k_3|\leq N}}e^{-\bi (B^{\delta,R} (t_n+\tau)-B^{\delta,R} (t_n))k^2}\overline{\hat{z}}_{k_1}\hat{z}_{k_2}\hat{z}_{k_3}\\
    &\quad
    \times\left(\int_{0}^{\tau} e^{\bi(B^{\delta,R} (t_n+s)- B^{\delta,R}(t_n))(k^2+k_1^2-k_2^2-k_3^2)}-e^{\bi(B^{\delta,R} (t_n+s)-B^{\delta,R}(t_n))2k_1^2}ds\right).
\end{split}\end{align}
Now we note that for $k+k_1=k_2+k_3$, we have (cf. \eqref{eqn:standard_approx_resonances})
\begin{align}\nonumber
    &\left|\int_{0}^{\tau} e^{\bi(B^{\delta,R}(t_n+s)-B^{\delta,R}(t_n))(k^2+k_1^2-k_2^2-k_3^2)}-e^{\bi (B^{\delta,R}(t_n+s)-B^{\delta,R}(t_n))2k_1^2}ds\right|\\\nonumber
    &\quad\quad\quad\quad\quad=\left|\int_{0}^{\tau} e^{\bi(B^{\delta,R} (t_n+s)-B^{\delta,R}(t_n))2(k_2k_3-k_1k_2-k_1k_3)}-1\,ds\right|\\\label{eqn:convolutional_kernel_estimate}
    &\quad\quad\quad\quad\quad\leq \int_{0}^\tau|B^{\delta,R} (t_n+s)-B^{\delta,R}(t_n)| ds 2(|k_2||k_3|+|k_1||k_2|+|k_1||k_3|)
\end{align}
where we used the fact that $|e^{\bi x}-1|\leq |x|$ for all $x\in\mathbb{R}$. Combining \eqref{eqn:convolutional_kernel_estimate} with \eqref{eqn:local_error_Fourier}, as well as a standard interpolation argument, we find
\begin{align*}
    \left\|\varphi_{t_n,t_{n}+\tau}(z)-\Phi_{n,\tau}(z)-\mathcal{R}(z)\right\|_{H^\bs}\lesssim \int_{0}^\tau|B^{\delta,R} (t_n+s)-B^{\delta,R}(t_n)|^{\gamma} ds \|z\|_{H^{\bs+\gamma}}^3,
\end{align*}
where we used that if for two functions $w,v\in H^{\bs+\gamma}$ we have $|\hat{w}_k|\leq |\hat{v}_k|, \forall k\in\mathbb{Z}$, then $\|w\|_{H^{\bs+\gamma}}\leq \|v\|_{H^{\bs+\gamma}}$. Using this, together with \eqref{eqn:estimate_on_R}, we complete the proof.
\end{proof}

\subsubsection{Global error}
In this part, we are in a position to present the global convergence result of the proposed scheme. To this end, we first give an a priori estimate of the numerical solution which is the key to establish the mean square convergence of first order for scheme \eqref{eqn:1st_order_lri}.

\begin{lm}\label{lm-pri-hs-est}
Let $T>0$, $\bs,\gamma \in \mathbb N,$  $\bs+\gamma\in \mathbb N^+$ and $u_0\in L^2$.
 There exists a constant $\epsilon>0$ such that if $\|u_0\|_{\dot H^{\bs+\gamma}}\le \epsilon,$
then  for  \eqref{wong-zakai} with $R\in [1,+\infty]$, $\delta\in(0,\tau]$ with $0<\tau=\frac {T}{K}$ and $n\le K$, and $N\in \mathbb N^+\cup\{+\infty\}$ 
satisfying that $ 
 R\epsilon$ and $\tau$ are sufficient small, 
it holds that 
\begin{align*}\|u^{\delta,R,N}_n\|_{H^{\bs+\gamma}}\lesssim \epsilon+\|u_0\|_{L^2},\,\, \text{a.s.} 
\end{align*}
\end{lm}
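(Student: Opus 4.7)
The plan is to establish the a priori bound through a pathwise discrete bootstrap argument on both $\|u_m^{\delta,R,N}\|_{L^2}$ and $\|u_m^{\delta,R,N}\|_{\dot H^{\bs+\gamma}}$, propagating the inductive hypothesis $\|u_m^{\delta,R,N}\|_{\dot H^{\bs+\gamma}}\le 2\epsilon$ and $\|u_m^{\delta,R,N}\|_{L^2}\le 2\|u_0\|_{L^2}$ from step $m=n$ to step $m=n+1$. Rewriting the scheme as $u_{n+1}^{\delta,R,N}=e^{\bi(B^{\delta,R}(t_{n+1})-B^{\delta,R}(t_n))\Delta}(u_n^{\delta,R,N}+\bi\lambda G_n)$ with $G_n:=\pi_N((u_n^{\delta,R,N})^2 I_{n,\tau}(-\Delta)\overline{u_n^{\delta,R,N}})$, the key observation is that the free evolution is an isometry on both $L^2$ and $\dot H^{\bs+\gamma}$, so the whole argument reduces to a careful estimate of the nonlinear increment $G_n$ in each of these two norms.

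For the propagation of the $\dot H^{\bs+\gamma}$ bound I would use the trivial operator estimate $|I_{n,\tau}(k^2)|\le \tau$ together with the usual bilinear Moser-type estimate on $\mathbb T$ (valid since $\bs+\gamma\ge 1>\tfrac12$) and the Gagliardo--Nirenberg inequality \eqref{GN-ine}. Under the inductive hypothesis this yields
\[
\|G_n\|_{\dot H^{\bs+\gamma}}\lesssim \tau\|u_n^{\delta,R,N}\|_{L^\infty}^{2}\|u_n^{\delta,R,N}\|_{\dot H^{\bs+\gamma}}\lesssim \tau\,\epsilon^{1/(\bs+\gamma)}\|u_0\|_{L^2}^{2-1/(\bs+\gamma)}\|u_n^{\delta,R,N}\|_{\dot H^{\bs+\gamma}},
\]
and a discrete Gronwall inequality then gives $\|u_{n+1}^{\delta,R,N}\|_{\dot H^{\bs+\gamma}}\le \epsilon\exp(CT\epsilon^{1/(\bs+\gamma)}\|u_0\|_{L^2}^{2-1/(\bs+\gamma)})$, which stays below $2\epsilon$ provided $\epsilon$ is sufficiently small.

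The harder part is the $L^2$ bootstrap, and this is where the smallness of $R\epsilon$ becomes essential. The decomposition $I_{n,\tau}(-\Delta)=\tau I+(I_{n,\tau}(-\Delta)-\tau I)$ is crucial because the $\tau I$ piece contributes $\tau\lambda\!\int|u_n^{\delta,R,N}|^{4}$ to $\langle u_n^{\delta,R,N},\bi\lambda G_n\rangle$, which is real, so it generates no $L^2$ drift — this mimics the exact $L^2$-conservation of the NLS nonlinearity. For the residual one uses
\[
|I_{n,\tau}(k^2)-\tau|\lesssim \tau\min\!\left(1,\bigl(k^2\sup_{s\in[0,\tau]}|B^{\delta,R}(t_n+s)-B^{\delta,R}(t_n)|\bigr)^{\theta}\right)
\]
for a suitable $\theta\in(0,(\bs+\gamma)/2]$, combined with the pathwise bound $\sup_{s\in[0,\tau]}|B^{\delta,R}(t_n+s)-B^{\delta,R}(t_n)|\le 2R\sqrt{T}$, which holds for every $\omega\in\Omega$ by the very definition of $B^{\delta,R}$. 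Summing the resulting per-step drift over the $K=T/\tau$ steps bounds the total $L^2$-deviation by a positive power of $R\epsilon$ times a constant depending only on $T,\|u_0\|_{L^2}$ and $\lambda$, so the bootstrap closes as soon as $R\epsilon$ and $\tau$ are sufficiently small. The main obstacle is exactly this delicate pathwise $L^2$-drift analysis: unlike the exact Wong--Zakai flow in Theorem~\ref{small-wel}, the numerical scheme does not conserve the $L^2$ mass exactly, and the resulting drift can only be absorbed via the pathwise magnitude of the Wong--Zakai increments, which is in turn tamed through the truncation parameter $R$ — this is precisely what forces the assumption that $R\epsilon$ be small.
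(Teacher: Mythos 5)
Your proposal is essentially correct, but it follows a genuinely different route from the paper. The paper never works directly with the mass structure of the scheme: instead it bounds $\|u^{\delta,R,N}(t_n)-u_n^{\delta,R,N}\|_{L^2}$ by comparing the iterates with the \emph{exact} Wong--Zakai flow, which conserves mass by \eqref{pri-mass-con} and satisfies the a priori bound of Theorem~\ref{small-wel}; the comparison is run through the local-error and stability estimates (the $L^2$ analogues of Propositions~\ref{prop:pathwise_local_error} and \ref{prop:pathwise_stability}), a discrete Gronwall step and a stopping-index bootstrap, with $R\epsilon$ entering because the local error carries the factor $\int_0^\tau|B^{\delta,R}(t_n+s)-B^{\delta,R}(t_n)|\,ds\le R\sqrt T\,\tau$ multiplied by the seminorm $\epsilon$ of the exact solution; only afterwards is the $\dot H^{\bs+\gamma}$ seminorm of the numerical solution bootstrapped via \eqref{GN-iterative1}. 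You instead run a self-contained double induction on the scheme itself, using the splitting $I_{n,\tau}(-\Delta)=\tau I+(I_{n,\tau}(-\Delta)-\tau I)$ so that the leading part pairs to the purely imaginary quantity $\bi\lambda\tau\int|u_n|^4$ (no $L^2$ drift), and controlling the residual through $|I_{n,\tau}(k^2)-\tau|$ and the pathwise bound on the truncated increments; this is arguably more structural (it isolates the near-mass-conservation of the resonance kernel, which the paper only exploits later, in Remark~3.8) and it avoids invoking the exact flow, at the price of some exponent bookkeeping that the paper's comparison argument sidesteps. Two points need care to make your $L^2$ step airtight: the estimate $|e^{\bi y}-1|\lesssim|y|^{\theta}$ forces $\theta\le 1$ (so your range $(0,(\bs+\gamma)/2]$ should be intersected with $(0,1]$), and after applying \eqref{GN-ine} and interpolation under the inductive hypotheses the accumulated drift is of size $R^{\theta}\epsilon^{(1+2\theta)/(\bs+\gamma)}$ up to constants in $T,\lambda,\|u_0\|_{L^2}$, which is a power of $R\epsilon$ only if $\theta(\bs+\gamma-2)\le 1$; choosing for instance $\theta=\min(1,1/(\bs+\gamma))$ closes the bootstrap, but this choice should be made explicit since it is exactly where the hypothesis that $R\epsilon$ is small is consumed. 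Also note that your $L^\infty$ factor in the $\dot H^{\bs+\gamma}$ step should really be the $L^\infty$ norm of the Fourier-modulus majorant of $u_n$ (the kernel bound $|I_{n,\tau}(k_1^2)|\le\tau$ is applied coefficientwise), but since you immediately pass to \eqref{GN-ine}, which only sees the $L^2$ and $\dot H^{\bs+\gamma}$ norms and these are unchanged by taking moduli of Fourier coefficients, the displayed inequality is still valid and matches the paper's \eqref{GN-iterative1}.
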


\begin{proof}
According to Theorem \ref{small-wel}, we can take $\epsilon$ small such that if $\|u_0\|_{\dot H^{\bs+\gamma}}\le \epsilon$, then
\begin{align*}
  \sup_{t\in [0,T]}\|u^{\delta,R,N}(t)\|_{\dot H^{\bs+\gamma}}\leq  2\epsilon,\,\,\text{a.s.}
\end{align*}
Define the stopping step $K_1=\inf\{0<k\le K|\|u_{k}^{\delta,R,N}\|_{\dot H^{\bs+\gamma}}>2\epsilon\}$. 
Since our scheme does not necessarily conserve mass (i.e. the $L^2$ norm) we use a bootstrap argument and start with an $L^2$-estimate as follows.

\textbf{Step 1:}
By the triangle inequality,
we have that 
\begin{align*}
\|u_{n+1}^{\delta,R,N}-u^{\delta,R,N}(t_{n+1})\|_{L^2}
&\le \|\phi_{t_{n},t_{n+1}}(u^{\delta,R,N}(t_n))-\Phi_{n,\tau}(u^{\delta,R,N}(t_n))\|_{L^2}\\&\quad+\|\Phi_{n,\tau}(u^{\delta,R,N}(t_n))-\Phi_{n,\tau}(u_{n}^{\delta,R,N})\|_{L^2}.
\end{align*}
On the one hand, repeating the procedures from the proof of  Proposition \ref{prop:pathwise_local_error}, using
\eqref{GN-ine} and the condition that $\bs+\gamma\in \mathbb N^+$, for $n\le K_1-1,$ it holds that 
\begin{align}\begin{split}\label{err-local}    &\|\phi_{t_{n},t_{n+1}}(u^{\delta,R,N}(t_n))-\Phi_{n,\tau}(u^{\delta,R,N}(t_n))\|_{L^2}\\
    &\lesssim \int_0^{\tau}|B^{\delta,R}(s+t_n)-B^{\delta,R}(t_n)|ds\sup_{t\in [0,\tau]}\|\phi_{t_n,t_n+t}(u^{\delta,R,N}(t_n))\|_{\dot H^{\bs+\gamma}}
\\
&\quad\times \|u^{\delta,R,N}(t_n)\|_{L^2}^2+\tau^2\sup_{t\in [0,\tau]}\|\phi_{t_n,t_n+t}(u^{\delta,R,N}(t_n))\|_{ H^{\bs+\gamma}}^5\\
&\lesssim  \int_0^{\tau}|B^{\delta,R}(s+t_n)-B^{\delta,R}(t_n)| ds\Big(\|u_0\|_{L^2}^2\epsilon
+\tau^2(\|u_0\|_{L^2}+\epsilon)^5\Big).
\end{split}\end{align}
On the other hand, similarly to the proof of   Proposition \ref{prop:pathwise_stability}, using the mass conservation law of $u^{\delta,R,N}(\cdot)$, for $n\le K_1-1$,
{\small
\begin{align}\begin{split}\label{err-gronwall}
&\|\Phi_{n,\tau}(u^{\delta,R,N}(t_n))-\Phi_{n,\tau}(u^{\delta,N,n})\|_{L^2}-\|u^{\delta,R,N}(t_n)-u^{\delta,R,N}
_n\|_{L^2}\\
&\lesssim \tau \|u^{\delta,R,N}(t_n)-u_n^{\delta,R,N}\|_{L^2}\\
&\quad\times (\|u^{\delta,R,N}(t_n)\|_{\dot H^{\bs+\gamma}}\|u_{n}^{\delta,R,N}(t_n)\|_{L^2}+\|u_{n}^{\delta,R,N}\|_{\dot H^{\bs+\gamma}}\|u_{n}^{\delta,R,N}\|_{L^2})\\
 &\lesssim  \tau \|u^{\delta,R,N}(t_n)-u_{n}^{\delta,R,N}\|_{L^2} \Big(\epsilon \|u_0\|_{L^2}+\epsilon( \|u_0\|_{L^2}+\|u_{}^{\delta,R,N}(t_n)-u_{n}^{\delta.R,N}\|_{L^2})\Big)\\
 &\lesssim  \epsilon\tau \|u^{\delta,R,N}(t_n)-u_{n}^{\delta,R,N}\|_{L^2}  \|u_0\|_{L^2}
+\epsilon \tau \|u^{\delta,R,N}(t_n)-u_{n}^{\delta,R,N}\|_{L^2}^2.
\end{split}\end{align}}Combining \eqref{err-gronwall}
 and \eqref{err-local} together, and using the fact that $|B^{\delta,R}(t)|\le R\sqrt{T}$, we have that \begin{align*}%\begin{split}%\label{l^2-err-gronwall}
&\|u_{}^{\delta,R,N}(t_{n+1})-u_{n+1}^{\delta,R,N}\|_{L^2} -\|u_{}^{\delta,R,N}(t_{n})-u^{\delta,R,N}_n\|_{L^2} \\
   &\lesssim \|u_{}^{\delta,R,N}(t_{n})-u^{\delta,R,N}_n\|_{L^2}\tau \epsilon \|u_0\|_{L^2}
  +\epsilon\tau \|u^{\delta,R,N}(t_{n})-u_{n}^{\delta,R,N}\|_{L^2}^2\\
&\quad+\Big(\int_0^{\tau}|B^{\delta,R}(s+t_n)-B^{\delta,R}(t_n)|ds\|u_0\|_{L^2}^2 \epsilon
+\tau^2(\|u_0\|_{L^2}+\epsilon)^5\Big)\\
 &\lesssim \|u^{\delta,R,N}(t_{n})-u_{n}^{\delta,R,N}\|_{L^2}\tau \epsilon \|u_0\|_{L^2}
   +\epsilon\tau \|u^{\delta,R,N}(t_{n})-u_n^{\delta,R,N}\|_{L^2}^2\\
&\quad+ R\sqrt{T}\tau \|u_0\|_{L^2}^2\epsilon
+\tau^2(\|u_0\|_{L^2}+\epsilon)^5.
%\end{split}
\end{align*}
%where in the last inequality, we use the assumption that  
%$ R^{}\lesssim \epsilon^{- 1}$. 

\textbf{Step 2:}
Define $K_2=\inf\limits_{}\{k\le K_1|\|u^{\delta,R,N}(t_{k})-u_{k}^{\delta,R,N}\|_{L^2}>2\epsilon+2\|u_0\|_{L^2}\}.$
Then before $n\le K_2-1,$ there exists $C_1>0$ such that
\begin{align*}  
&\|u^{\delta,R,N}(t_{n+1})-u_{n+1}^{\delta,R,N}\|_{L^2}  \\\nonumber
&\le  C_1e^{C_1T \epsilon\|u_0\|_{L^2}} \Big(\sum_{i=0}^{n} \tau\epsilon \|u^{\delta,R,N}(t_{i})-u_{i}^{\delta,R,N}\|_{L^2}^2+ RT^{\frac 32}\|u_0\|_{L^2}^2 \epsilon
+T\tau(\|u_0\|_{L^2}+\epsilon)^5 \Big).
\end{align*}
Now, choosing $R\epsilon$ and $\tau$ sufficient small such that 
\begin{align}\begin{split}\label{cond-l2-pri}
e^{C_1T\epsilon\|u_0\|_{L^2}} C_1 T\epsilon(\epsilon+\|u_0\|_{L^2})  &\le \frac 1 4,\\%\nonumber 
 C_1Te^{C_1T\epsilon\|u_0\|_{L^2}}\Big[  T^{\frac {1} 2}\|u_0\|_{L^2}^2\epsilon R
+\tau(\|u_0\|_{L^2}+\epsilon)^5\Big]&\le \epsilon+\|u_0\|_{L^2},
\end{split}\end{align}the bootstrap argument yields that 
 for all $n\le K_1=K_2,$
\begin{align}\label{final-l2-bound}    \|u^{\delta,R,N}(t_{n})-u_n^{\delta,R,N}\|_{L^2} \le 2\epsilon +2\|u_0\|_{L^2}.
\end{align}

Next we apply the bootstrap  argument again to the homogeneous Sobolev norm. By  \eqref{GN-ine} and the unitary property of $e^{\bi \Delta B^{\delta,R}(\cdot)}$, there exists a constant $C_{sob,1}'>0$ such that for $n\le K_1-1,$
\begin{align}\label{GN-iterative1}
&\|u_{n+1}^{\delta,R,N}\|_{\dot H^{\bs+\gamma}}\le \|u_n^{\delta,R,N}\|_{\dot H^{\bs+\gamma}}+C_{sob,1}'C_{GN}^2|\lambda|\tau \|u_n^{\delta,R,N}\|_{\dot H^{\bs+\gamma}}^{1+\frac 1{\bs+\gamma}}\|u_n^{\delta,R,N}\|_{L^2}^{2-\frac 1{\bs+\gamma}}.
\end{align}

As a consequence of \eqref{final-l2-bound}-\eqref{GN-iterative1} and the conservation law \eqref{pri-mass-con}, we obtain that for all $n\le K_1-1,$
\begin{align*}\|u_{n+1}^{\delta,R,N}\|_{\dot H^{\bs+\gamma}} 
&\le \|u_{0}\|_{\dot H^{\bs+\gamma}}
+\sum_{i=0}^{n}C_{sob,1}'C_{GN}^2|\lambda|\tau \|u_i^{\delta,R,N}\|_{\dot H^{\bs+\gamma}}^{1+\frac 1{\bs+\gamma}}\|u_i^{\delta,R,N}\|_{L^2}^{2-\frac 1{\bs+\gamma}}\\
&\le \epsilon+ C_{sob,1}'C_{GN}^2|\lambda| T(2\epsilon)^{1+\frac 1{\bs+\gamma}}(2\epsilon+3\|u_0\|_{L^2})^{2-\frac 1 {\bs+\gamma}}.
\end{align*}
By further requiring $\epsilon$ small enough such that $$C_{sob,1}'C_{GN}^2|\lambda| T 2^{1+\frac 1{\bs+\gamma} }\epsilon ^{\frac 1{\bs+\gamma}}(2\epsilon+3\|u_0\|_{L^2})^{2-\frac 1 {\bs+\gamma}}\le 1,$$
we have obtained  $\|u_n^{\delta,R,N}\|_{\dot H^{\bs+\gamma}}\le 2\epsilon$ for all $n\le K=K_1$ via the bootstrap argument. This, together with \eqref{final-l2-bound}, completes the proof.

\end{proof}

\begin{rk}\label{rk-3.1}
If in addition we assume that $\|u_0\|_{H^{\bs+\gamma}}\le \epsilon$ for a small $\epsilon$ such that $2^3C_{sob,1}'C_{GN}^2|\lambda|T\epsilon^2\le 1$,
then by using similar arguments as in the proof of Theorem~\ref{small-wel},
one can obtain that 
\begin{align*}
\sup_{n\le K,N\in\mathbb N^+}\sup_{R\in [1,+\infty]} \|u_n^{\delta,R,N}\|_{ H^{\bs+\gamma}}\lesssim \epsilon. 
\end{align*}
Note that under this stronger smallness condition, there is no need to introduce a finite  truncation number $R$.  
\end{rk}

\begin{theorem}[Pathwise convergence in the Wong--Zakai approximation]\label{thm:global_error_pathwise_convergence_wong-zakai}Let $T>0$, $\bs,\gamma \in \mathbb N, \bs+\gamma\in \mathbb N^+,$   and $u_0\in L^2$.
 There exists a constant  $\epsilon>0$ such that if $\|u_0\|_{\dot H^{\bs+\gamma}}\le \epsilon,$
then  for   \eqref{wong-zakai} with $R\in [1,+\infty]$, $\delta\in(0,\tau]$ with $0<\tau=\frac {T}{K}$ and $n\le K$, and $N\in \mathbb N^+\cup\{+\infty\}$ 
satisfying that $ 
 R\epsilon$ and $\tau$ are sufficient small,
we have 
    \begin{align*}
&\left\|u^{\delta,R,N}(t_n)-u^{\delta,R,N}_n\right\|_{H^{\bs}}\lesssim  \tau+ \sum_{i=0}^{n-1}\int_0^{\tau}|B^{\delta,R}(t+t_i)-B^{\delta,R}(t_i)|dt,\,\, \text{a.s.} 
    \end{align*}
\end{theorem}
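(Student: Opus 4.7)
The plan is to follow a standard Lax-equivalence style argument: combine the single-step local error estimate (Proposition~\ref{prop:pathwise_local_error}) with the one-step stability estimate (Proposition~\ref{prop:pathwise_stability}) via a discrete Gr\"onwall inequality, using the a priori bounds from Theorem~\ref{small-wel} and Lemma~\ref{lm-pri-hs-est} to ensure that all constants arising are uniform in $n,\tau,\delta,R,N$ and the sample $\omega$.

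Concretely, denote the pathwise error at step $n+1$ by $e_{n+1}:=u^{\delta,R,N}(t_{n+1})-u^{\delta,R,N}_{n+1}$ and write it as
\begin{align*}
e_{n+1}=\underbrace{\varphi_{t_n,t_n+\tau}(u^{\delta,R,N}(t_n))-\Phi_{n,\tau}(u^{\delta,R,N}(t_n))}_{\text{local error}}+\underbrace{\Phi_{n,\tau}(u^{\delta,R,N}(t_n))-\Phi_{n,\tau}(u^{\delta,R,N}_n)}_{\text{propagated error}}.
\end{align*}
To the first bracket I would apply Proposition~\ref{prop:pathwise_local_error} with the sharpest admissible H\"older index (i.e., taking the local-error parameter equal to $1$). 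Since $\varphi_{t_n,t_n+t}(u^{\delta,R,N}(t_n))=u^{\delta,R,N}(t_n+t)$, Theorem~\ref{small-wel} controls $\|\varphi_{t_n,t_n+t}(u^{\delta,R,N}(t_n))\|_{H^{\bs+\gamma}}$ by a deterministic constant depending only on $\|u_0\|_{L^2}$ and $\epsilon$. For the second bracket I would invoke Proposition~\ref{prop:pathwise_stability} with the same choice of parameter; here Lemma~\ref{lm-pri-hs-est} delivers the required uniform bound on $\|u^{\delta,R,N}_n\|_{H^{\bs+\gamma}}$, which is available precisely because of the smallness hypothesis on $\|u_0\|_{\dot H^{\bs+\gamma}}$ and the restrictions on $R\epsilon$ and $\tau$.

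Combining these two estimates one obtains an almost sure recursion of the form
\begin{align*}
\|e_{n+1}\|_{H^{\bs}}\le (1+C\tau)\|e_n\|_{H^{\bs}}+C\left(\int_0^{\tau}|B^{\delta,R}(t+t_n)-B^{\delta,R}(t_n)|\,dt+\tau^2\right),
\end{align*}
with a deterministic constant $C>0$. Since $e_0=0$, the discrete Gr\"onwall inequality yields
\begin{align*}
\|e_n\|_{H^{\bs}}\le C e^{CT}\left(\sum_{i=0}^{n-1}\int_0^{\tau}|B^{\delta,R}(t+t_i)-B^{\delta,R}(t_i)|\,dt+n\tau^2\right),
\end{align*}
and the estimate $n\tau\le T$ absorbs the second sum into a multiple of $\tau$, giving exactly the stated bound.

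The main obstacle is not the Gr\"onwall recursion itself, but rather verifying that the constants appearing in Propositions~\ref{prop:pathwise_stability} and~\ref{prop:pathwise_local_error} can be made uniform in the iteration index. These constants depend on the $H^{\bs+\gamma}$-norms of both the exact flow and the numerical iterates, and it is exactly the subtle interplay between the smallness of $\|u_0\|_{\dot H^{\bs+\gamma}}$, the condition on $R\epsilon$ and $\tau$, and the bootstrap argument of Lemma~\ref{lm-pri-hs-est} that prevents the a priori regularity of $u^{\delta,R,N}_n$ from deteriorating with $n$ and thus allows a deterministic $C$ to be chosen above.
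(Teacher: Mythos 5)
Your proposal is correct and follows essentially the same route as the paper: the identical splitting into local error plus propagated error, bounded respectively by Proposition~\ref{prop:pathwise_local_error} (with the Brownian-increment exponent taken to be $1$) and Proposition~\ref{prop:pathwise_stability}, with uniformity of the constants supplied by Theorem~\ref{small-wel} and Lemma~\ref{lm-pri-hs-est}, followed by the discrete Gr\"onwall/induction step and the absorption $n\tau^2\le T\tau$. The only cosmetic difference is your stability factor $(1+C\tau)$ versus the paper's $e^{\tau C}$, which is immaterial.
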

\begin{proof} Let us write $E_n$ for the global error at a given time $t_n\leq T$, i.e.
\begin{align*}
    E_n:=\left\|u^{\delta,R,N}(t_n)-u^{\delta,R,N}_n\right\|_{H^{s}}
\end{align*}
Then we have that for $n\le K-1,$
\begin{align*}   E_{n+1}&=\left\|\varphi_{t_{n},t_{n}+\tau}(u^{\delta,R,N}(t_{n}))-\Phi_{n,\tau}(u^{\delta,R,N}_{n})\right\|_{H^{s}}\\
        &\leq \left\|\varphi_{t_{n},t_{n}+\tau}(u^{\delta,R,N}(t_{n}))-\Phi_{n,\tau}(u^{\delta,R,N}(t_{n}))\right\|_{H^{s}}\\
        &\quad+\left\|\Phi_{n,\tau}(u^{\delta,R,N}(t_{n}))-\Phi_{n,\tau}(u^{\delta,R,N}_{n})\right\|_{H^{s}}.
\end{align*}
Using Propositions~\ref{prop:pathwise_stability} \& \ref{prop:pathwise_stability} and Lemma \ref{lm-pri-hs-est} we deduce that $\exists M,C>0$ such that 
\begin{align*}
    E_{n+1}&\leq \left(\int_{0}^{\tau}|B^{\delta,R}(s+t_n)-B^{\delta,R}(t_n)| ds+\tau^2\right) M+e^{\tau C}E_{n}\\
        &\leq \left(\tau+ \int_0^{\tau}|B^{\delta,R}(s+t_n)-B^{\delta,R}(t_n)|ds \right)M+e^{\tau C}E_{n},
\end{align*}
for some constants $M,C>0$ which only depend on $\|u_0\|_{ H^{\bs+\gamma}}$  polynomially. Moreover, clearly $E_0=0$. Thus by induction
\begin{align}\label{eqn:global_error_estimate_induction_step}
    E_{n+1}&\leq e^{TC}M \left(T\tau+ \sum_{i=0}^n\int_0^{\tau}|B^{\delta,R}(t+t_i)-B^{\delta,R}(t_i)|dt \right),
\end{align}
which completes the proof.
%and (noting that $\|u^{\delta,N,n}_\omega\|_{H^s(\mathbb{T})}\leq E_n+R/2$ and that $\lim_{\tau\rightarrow0}\sup_{s\in[0,\tau)}|W_\omega^\delta(s+t_n)-W_\omega^\delta(t_n)|=0$), there is a $\tau_0(\omega,\delta,R)>0$ such that for all $\tau<\tau_0$ and $t_n\leq T$ $\|u^{\delta,N,n}_\omega\|_{H^s(\mathbb{T})}\leq R$.
\end{proof}

%This result, together with Proposition 
%\ref{prop:approx_prop_wong_zakai_periodic}, guarantees the convergence of our method to the original model.   
%Note that the pathwise result not shown here in interest of brevity, but the pathwise convergence still hold even without smallness assumption. 

Let the smallness assumption of Theorem \ref{thm:global_error_pathwise_convergence_wong-zakai} hold with $\gamma=1$.
 By taking expectation to the estimates in Proposition 
\ref{prop:approx_prop_wong_zakai_periodic} and Theorem \ref{thm:global_error_pathwise_convergence_wong-zakai}, and using \eqref{hold-est} and Lemma \ref{lm-improv-order}, we have that for 
   $R\in [\sqrt{8|\ln(\delta)|},+\infty]$, $\delta\in(0,\tau]$,  and $N\in \mathbb N^+\cup\{+\infty\}$ 
satisfying that $ 
 R\epsilon$ and $\tau$ are sufficient small, 
\begin{align*}
 \sup_{n\le K}\E[\|u^{\delta,R,N}(t_n)-u(t_n)\|_{H^{\bs}}]\lesssim (N^{-\gamma}+\delta^{\frac 12\min(1,\frac \gamma 2)}+\tau^{\frac 1 2}).  
\end{align*}
%By Remark \ref{rk-3.1}, the restriction $R\lesssim \epsilon^{-1}$ can be removed if we have the smallness condition on $H^{\bs+\gamma}(\mathbb T)$-norm. 
%If the approximation error of the Wong-Zakai approximation can be ignored,
Compared with the existing result \cite{cui2017stochastic,cohen2017exponential} where they need two additional bounded derivatives on $u_0$ to derive $\frac 12$-convergence order w.r.t. $\tau$, we only need one additional regularity condition to achieve the same convergence order.

\subsection{Improved mean-square error analysis}

In this part, we show that the mean-square convergence order w.r.t. $\tau$ of the low regularity integrator towards to the Wong--Zakai approximation \eqref{wong-zakai} can be improved. The key ingredients of the proof are using the conditional expectation and the structure of the resonance-based integrator.

\begin{theorem}\label{tm-improve}
Let $T>0$, $\bs\in \mathbb N,\gamma \in \mathbb N^+,$   and $u_0\in L^2$.
 There exists a constant  $\epsilon>0$ such that if $\|u_0\|_{\dot H^{\bs+\gamma}}\le \epsilon,$
then for 
\eqref{wong-zakai} with $R\in [\sqrt{8|\ln(\delta)|},+\infty]$, $\delta\in(0,\tau]$ with $0<\tau=\frac {T}{K}$ and  $n\le K$, and $N\in \mathbb N^+\cup\{+\infty\}$ 
satisfying that $ 
 R\epsilon$ and $\tau$ are sufficient small,
it holds that  
    \begin{align*}
&\sup_{n\le K}\left\|u^{\delta,R,N}(t_n)-u^{\delta,R,N}_n\right\|_{L^2(\Omega;H^{\bs})} \lesssim  \tau^{\max(\frac {\gamma+1}{4},1)}. 
    \end{align*}
\end{theorem}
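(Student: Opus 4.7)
The plan is to refine the pathwise local error from Proposition~\ref{prop:pathwise_local_error} by exploiting stochastic cancellations that become visible only after taking conditional expectations with respect to the filtration $\{\mathcal F_{t_n}\}_{n\ge 0}$. The decisive oscillatory object in the local error \eqref{eqn:local_error_Fourier} is, on the diagonal $k+k_1=k_2+k_3$,
$$
\int_0^\tau\Big(e^{\bi y(B^{\delta,R}(t_n+s)-B^{\delta,R}(t_n))}-1\Big)\,ds,\qquad y:=2(k_2k_3-k_1k_2-k_1k_3),
$$
for which the bound $|e^{\bi z}-1|\le\min(|z|,2)$ used in Proposition~\ref{prop:pathwise_local_error} is sharp pointwise but loses a square-root compared with what holds in mean. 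To recover this, I plan to Taylor expand one step further,
$$
e^{\bi z}-1=\bi z+(e^{\bi z}-1-\bi z),\qquad |e^{\bi z}-1-\bi z|\le \tfrac{z^2}{2},
$$
and decompose the local truncation error as $\mathcal L_n=\mathcal M_n+\mathcal Q_n$, where $\mathcal M_n$ is linear in the Brownian increment and $\mathcal Q_n$ is the quadratic remainder.

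Next I would accumulate errors in $L^2(\Omega;H^{\bs})$ using the recursion $E_{n+1}\le(1+\tau C)E_n+\|\mathcal L_n\|_{H^{\bs}}$ from Propositions~\ref{prop:pathwise_stability}--\ref{prop:pathwise_local_error}, combined with the a priori regularity bound of Lemma~\ref{lm-pri-hs-est}. Writing the total error as $u^{\delta,R,N}(t_n)-u_n^{\delta,R,N}=\sum_{i=0}^{n-1}\Psi_{n,i}(\mathcal M_i+\mathcal Q_i)$ with uniformly Lipschitz propagators $\Psi_{n,i}$, the quadratic remainder $\mathcal Q_n$ has pathwise size $\tau^2$ (via \eqref{eqn:convolutional_kernel_estimate} with one extra factor of $y$) at the cost of two more spatial derivatives, which are available when $u_0\in H^{\bs+\gamma}$ with $\gamma\ge 3$; the triangle inequality then gives $\|\sum_i\Psi_{n,i}\mathcal Q_i\|_{L^2(\Omega;H^{\bs})}\lesssim K\tau^2=T\tau$. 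The crux of the argument is the linear part $\mathcal M_n$, which contains the centred integral $\int_0^\tau(B^{\delta,R}(t_n+s)-B^{\delta,R}(t_n))\,ds$. This integral has $L^2(\Omega)$-norm $\lesssim\tau^{3/2}$ and, critically, is a martingale difference modulo the truncation. Since $R\ge\sqrt{8|\ln\delta|}$, Lemma~\ref{lm-2.1-wong-zakai} forces the truncation contribution to be $\mathcal O(\delta^2)$, so near-orthogonality of the $\mathcal M_i$ yields
$$
\Big\|\sum_{i=0}^{n-1}\Psi_{n,i}\mathcal M_i\Big\|_{L^2(\Omega;H^{\bs})}^2\lesssim\sum_{i=0}^{n-1}\|\mathcal M_i\|_{L^2(\Omega;H^{\bs})}^2\lesssim K\tau^3\lesssim T\tau^2,
$$
i.e.\ a mean-square contribution of size $\tau$. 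This delivers the claimed rate $\tau$ whenever $\gamma\ge 3$; for $\gamma\in\{1,2\}$ the intermediate rate $\tau^{(\gamma+1)/4}$ will follow by balancing how many Taylor terms one can afford to subtract against the available spatial regularity, interpolating between the new mean-square bound and the pathwise bound $\tau^{1/2}$ obtained in Theorem~\ref{thm:global_error_pathwise_convergence_wong-zakai}.

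The main obstacle I anticipate is in making the martingale-difference step rigorous, since $B^{\delta,R}$ is neither a Brownian motion nor a martingale in the strict sense: it is both piecewise linear in time and truncated at level $\pm R\sqrt{t}$. My plan is to split the sample space into the ``good'' event on which $B=B^R$ at every step endpoint $t_0,\ldots,t_K$—so that the increments of $B^{\delta,R}$ over $[t_n,t_{n+1}]$ coincide with martingale increments of $B$ there and hence have zero conditional mean given $\mathcal F_{t_n}$—and a ``bad'' event of probability $\mathcal O(\delta^2)$ whose contribution is absorbed into the Wong--Zakai truncation error already quantified by Lemma~\ref{lm-2.1-wong-zakai}. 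A secondary but routine care is needed when deriving the $L^2$-stability of the Gronwall step: as in the proof of Lemma~\ref{lm-pri-hs-est}, one first closes an $L^2$-bootstrap to control $\|u_n^{\delta,R,N}\|_{L^2}$ uniformly in $n$, and then iterates the argument in $H^{\bs+\gamma}$ to propagate the higher-order regularity needed to estimate the quadratic remainder $\mathcal Q_n$.
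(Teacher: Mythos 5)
Your overall strategy --- upgrade the pathwise $\tau^{1/2}$ rate by showing that the \emph{conditional mean} of the local error given $\mathcal F_{t_n}$ is half an order better, then trade temporal order against spatial regularity for small $\gamma$ --- is exactly the mechanism of the paper's proof. But two steps in your execution have genuine gaps. First, your identification of the conditionally centred part is incorrect. On the resonance surface the local error kernel is $\int_0^\tau e^{2\bi k_1^2\xi(s)}\bigl(e^{\bi y\xi(s)}-1\bigr)ds$ with $\xi(s)=B^{\delta,R}(t_n+s)-B^{\delta,R}(t_n)$ and $y=2(k_2k_3-k_1k_2-k_1k_3)$; Taylor expanding only $e^{\bi y\xi}-1$ gives a ``linear'' term $\bi y\int_0^\tau e^{2\bi k_1^2\xi(s)}\xi(s)\,ds$ whose conditional mean is \emph{not} zero: for a centred Gaussian increment, $\mathbb E[e^{2\bi k_1^2\xi(s)}\xi(s)]$ has size $k_1^2 s$, so this term contributes $|y|\,k_1^2\,\tau^2$ to the conditional mean. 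That is precisely the origin of the weight $|k_1|^{3}|k_2|$ in the paper's bounds \eqref{err-estimate1}--\eqref{mean-square}, hence of the $H^{\bs+3}$ requirement for order one and the degraded rate $(\gamma+1)/4$ for $\gamma<3$; your accounting (``one extra factor of $y$'') misses it. If you instead centre $\bi y\int_0^\tau\xi(s)ds$ itself, the remainder carries $|y|(k_1^2+|y|)$ --- again three derivatives --- so your threshold $\gamma\ge3$ only comes out right after this correction. The paper sidesteps the issue by computing $\mathbb E[e^{\bi A\xi(s)}]-\mathbb E[e^{2\bi k_1^2\xi(s)}]$ exactly via Gaussian characteristic functions and interpolating that bound against the pathwise kernel bound \eqref{eqn:convolutional_kernel_estimate}, giving $\tau^{2-\theta/2}$ at regularity cost $3-2\theta$ and thus $(\gamma+1)/4$ directly; by contrast, interpolating two \emph{global} error bounds in the regularity of the data, as you propose for $\gamma\in\{1,2\}$, is not legitimate for this nonlinear problem --- the interpolation must be done at the level of the oscillatory kernel before summing over frequencies.

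Second, your mechanism for accumulating the centred parts --- writing the global error as $\sum_i\Psi_{n,i}(\mathcal M_i+\mathcal Q_i)$ with ``uniformly Lipschitz propagators'' and invoking near-orthogonality --- does not hold up: the propagators are nonlinear random maps depending on the Brownian path beyond $t_{i+1}$, so the error is not a linear superposition of propagated local errors, and the Lipschitz bound controls norms only, destroying the sign information that orthogonality needs. The correct implementation (and the paper's) is to expand $\mathbb E[E_{n+1}^2]$ and treat the cross term $Er_3$ by conditioning on $\mathcal F_{t_n}$: this works because the accumulated error and the cubic coefficients $\overline{\hat z_{k_1}}\hat z_{k_2}\hat z_{k_3}$ at time $t_n$ are $\mathcal F_{t_n}$-measurable (here the standing assumption $t_n/\delta\in\mathbb Z$ and $\delta\le\tau$ matters), while $\xi(s)$, $s\in[0,\tau]$, is independent of $\mathcal F_{t_n}$; the resulting terms $J_1,\dots,J_4$ are then closed by Gronwall. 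Your ``good event'' treatment of the truncation is fine in spirit (for $R\ge\sqrt{8|\ln\delta|}$ the defect is of higher order in $\delta$, cf.\ \eqref{err-trun}), but it addresses the wrong obstacle: the real difficulty is the nonlinearity of the error propagation, not the fact that $B^{\delta,R}$ is only approximately a martingale.
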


\begin{proof}
We follow the same notations as in the proof of Theorem  \ref{thm:global_error_pathwise_convergence_wong-zakai}. It can be seen that for $n\le K-1,$ 
 \begin{align}\nonumber
 \mathbb E [E_{n+1}^2]     &=\mathbb E \left\|\varphi_{t_{n},t_{n}+\tau}(u^{\delta,R,N}(t_{n}))-\Phi_{n,\tau}(u^{\delta,R,N}_{n})\right\|_{H^{\bs}}^2\\\nonumber
        &= \mathbb E \left\|\varphi_{t_{n},t_{n}+\tau}(u^{\delta,R,N}(t_{n}))-\Phi_{n,\tau}(u^{\delta,R,N}(t_{n}))\right\|_{H^{\bs}}^2\\\nonumber
&\quad+\mathbb E \left\|\Phi_{n,\tau}(u^{\delta,R,N}(t_{n}))-\Phi_{n,\tau}(u^{\delta,R,N}_n)\right\|_{H^{\bs}}^2\\
\nonumber &\quad+2\mathbb E \<\varphi_{t_{n},t_{n}+\tau}(u^{\delta,R,N}(t_{n}))-\Phi_{n,\tau}(u^{\delta,R,N}(t_{n})), \\\nonumber
&\quad\quad \quad  \Phi_{n,\tau}(u^{\delta,R,N}(t_{n}))-\Phi_{n,\tau}(u^{\delta,R,N}_{n})\>_{H^{\bs}}\\\label{err-formula}
&=: Er_{1}+Er_{2}+Er_{3}.
\end{align}
According to Propositions \ref{prop:pathwise_stability} and \ref{prop:pathwise_local_error}, using Theorem \ref{small-wel} and Lemma \ref{lm-pri-hs-est}, as well as \eqref{hold-est}, we have that there exists $c_0>0$ such that  
\begin{align}\nonumber
Er_1+Er_2&\le (1+c_0\tau)\E[E_n^2]
+c_0 \Big[\tau^4+\E \Big(\int_0^{\tau} |B^{\delta,R}(s+t_n)-B^{\delta,R}(t_n)|ds\Big)^2\Big]\\\label{err-er12}
&\le (1+c_0\tau)\E[E_n^2]
+c_0\tau^3.
\end{align}

It remains to bound $Er_3$.
Denote $z=v^{\delta,R,N}(t_{n}), y=v^{\delta,R,N}_n.$ Note that they are $\mathcal F_{t_{n}}$-measurable. Denote 
\begin{align*}
Er_{4,k}&=\bi\lambda\sum_{\substack{k+k_1=k_2+k_3\\|k|,|k_1|,|k_2|,|k_3|\leq N}}e^{i B^{\delta,R} (t_n)(k^2+k_1^2-k_2^2-k_3^2)}\overline{\hat{z}}_{k_1}\hat{z}_{k_2}\hat{z}_{k_3}\\
&\quad\times\left(\int_{0}^{\tau} e^{i(B^{\delta,R} (t_n+s)-B^{\delta,R}(t_n))(k^2+k_1^2-k_2^2-k_3^2)}-e^{i(B^{\delta,R} (t_n+s)-B^{\delta,R}(t_n))2k_1^2}ds\right).
\end{align*}
By the Parserval identity, we can compute the inner product with the twist variable, i.e.,
\begin{align*}
  &Er_3=2\E \Big\<{Er_{4,k}} +\widehat {\mathcal R}(z)_k ,\\
  &\quad (\widehat z)_k+\bi\lambda \sum_{k+k_1=k_2+k_3}e^{i B^{\delta,R}(t_n)(k^2+k_1^2-k_2^2-k_3^2)}I_{n,\tau} \overline{(\widehat z)_{k_1}}(\widehat z)_{k_2}(\widehat z)_{k_3}\\
  &-(\widehat y)_k+\bi\lambda \sum_{k+k_1=k_2+k_3}e^{i B^{\delta,R}(t_n)(k^2+k_1^2-k_2^2-k_3^2)}I_{n,\tau} \overline{(\widehat y)_{k_1}}(\widehat y)_{k_2}(\widehat y)_{k_3}\Big\>_{H^\bs_k}.
\end{align*}
Here $H_k^{\bs}$ denotes the Sobolev space in Fourier coordinates, and  $\mathcal R(z)$ is the remainder term defined in \eqref{eqn:estimate_on_R}.
We decompose $Er_3$ as 
\begin{align*}
  Er_3&=2 \mathbb E \<Er_{4,k}, \widehat{z}_k-\widehat y_k \>_{H_k^{\bs}}\\
  &+2 \mathbb E \<Er_{4,k}, \bi\lambda \sum_{k+k_1=k_2+k_3}e^{i B^{\delta,R}(t_n)(k^2+k_1^2-k_2^2-k_3^2)}I_{n,\tau} \\
  &\quad \times [\overline{(\widehat z)_{k_1}}(\widehat z)_{k_2}(\widehat z)_{k_3}-\overline{(\widehat y)_{k_1}}(\widehat y)_{k_2}(\widehat y)_{k_3}] \>_{H_k^\bs}\\
  &+2 \mathbb E \<{\mathcal R}(z)_k, \widehat{z}_k-\widehat y_k \>_{H_k^{\bs}}\\
  &+2 \mathbb E \<{\mathcal R}(z)_k, \bi\lambda \sum_{k+k_1=k_2+k_3}e^{i B^{\delta,R}(t_n)(k^2+k_1^2-k_2^2-k_3^2)}I_{n,\tau}\\
  &\quad \times[\overline{(\widehat z)_{k_1}}(\widehat z)_{k_2}(\widehat z)_{k_3}-\overline{(\widehat y)_{k_1}}(\widehat y)_{k_2}(\widehat y)_{k_3}] \>_{H_k^\bs}\\
  &:=J_1+J_2+J_3+J_4.
\end{align*}
By \eqref{eqn:estimate_on_R} and \eqref{eqn:convolutional_kernel_estimate}, as well as H\"older's and Young's  inequalities,
there exist constants $c_j>0,j=1,\dots,4,$ such that
\begin{align*}
J_3&\le c_1\tau^2 \mathbb E [\sup_{t\in [0,\tau]}\|u^{\delta,R,N}(t_{n}+t)\|^5_{H^{\bs+1}}E_{n}]\\
&\le c_2\tau \mathbb E E_{n}^2+c_2\tau^3\mathbb E [\sup_{t\in [0,\tau]}\|u^{\delta,R,N}(t_{n}+t)\|^{10}_{H^{\bs+1}}],\\
%\end{align*}
%and 
%\begin{align*}
\text{and}\ \ J_4&\le c_3 \tau^3 \mathbb E [\sup_{t\in [0,\tau]}(\|u^{\delta,R,N}(t_{n}+t)\|^8_{H^{\bs+1}}+\|u^{\delta,R,N}_{n}\|^8_{H^{\bs+1}})E_{n}]\\
&\le  c_4\tau \mathbb E E_{n}^2+c_4\tau^3\mathbb E [\sup_{t\in [0,\tau]}( \|u^{\delta,R,N}(t_{n}+t)\|^{16}_{H^{\bs+1}}+\|u^{\delta,R,N}_{n}\|^{16}_{H^{\bs+1}})].
\end{align*}
For the term $J_2,$  using \eqref{hold-est} and Proposition \ref{prop:pathwise_local_error}, by H\"older's inequality and Young's inequality, there exists constants $c_5,c_6>0$ such that
\begin{align*}
  J_2&\le c_5 \tau \mathbb E \Big[\int_0^\tau |B^{\delta,R}(t_n+s)-B^{\delta,R}(t_n)|^2ds 
(1+\|z\|_{H^{\bs+1}}^5+\|y\|_{H^{\bs+1}}^5)E_{n}\Big]\\
&\le c_6\tau^5\E (1+\|z\|_{H^{\bs+1}}^{10}+\|y\|_{H^{\bs+1}}^{10})+c_6\tau \mathbb E[E_{n}^2].
\end{align*}
Finally, we estimate the term $J_1$. Using the conditional expectation and the independent increments  of Brownian motion, we have that
\begin{align*}
  J_1&=2\mathbb E\< \mathbb E [Er_{4,k}|\mathcal F_{t_{n}}], \widehat{z}_k-\widehat y_k\>_{H_k^\bs}. 
\end{align*}
Notice that
\begin{align*}
    &\mathbb E [Er_{4,k}|\mathcal F_{t_n}]\\
&=\bi\lambda\!\!\!\!\sum_{\substack{k+k_1=k_2+k_3\\|k|,|k_1|,|k_2|,|k_3|\leq N}}e^{i B^{\delta,R} (t_n)(k^2+k_1^2-k_2^2-k_3^2)}\overline{\hat{z}}_{k_1}\hat{z}_{k_2}\hat{z}_{k_3}\\ &\quad\quad\times\left(\int_{0}^{\tau} \mathbb E e^{i(B^{\delta,R} (t_n+s)-B^{\delta,R}(t_n))(k^2+k_1^2-k_2^2-k_3^2)}-\mathbb E e^{i(B^{\delta,R} (t_n+s)-B^{\delta,R}(t_n))2k_1^2}ds\right).
\end{align*}
The definition of $B^{\delta,R}$ and the properties of the Brownian motion, together with \eqref{eqn:standard_approx_resonances} and \eqref{err-trun} with $R\ge \sqrt{8|\ln(\delta)|}$, yield that for some $c_7>0, ${\small 
\begin{equation}\label{err-estimate1}
\begin{split}
     &\Big|\int_{0}^{\tau} \mathbb E e^{i(B^{\delta,R}(t_n+s)-B^{\delta,R}(t_n))(k^2+k_1^2-k_2^2-k_3^2)}-\mathbb E e^{i(B^{\delta,R} (t_n+s)-B^{\delta,R}(t_n))2k_1^2}ds\Big|\\
     &\le \int_0^{\tau}
     e^{-\frac 12[\frac {(s-[s]\delta)^2}{\delta}+[s]\delta](k^2+k_1^2-k_2^2-k_3^2)^2}-e^{-\frac 12[\frac {(s-[s]\delta)^2}{\delta}+[s]\delta](2k_1^2)^2} ds\\
&+c_7\tau\delta^2(k^2+k_1^2+k_2^2+k_3^2)\\
     &\le c_7\int_0^{\tau}e^{-\frac 12[\frac {(s-[s]\delta)^2}{\delta}+[s]\delta](2k_1^2)^2}  \Big(e^{-\frac 12[\frac {(s-[s]\delta)^2}{\delta}+[s]\delta][(2k_1^2-2k_2k_3-2k_1(k_2+k_3))^2-(2k_1^2)^2]}-1\Big)ds\\
&+c_7\tau\delta^2 [2k_1^2+2|k_2k_3|+2|k_1k_3|+|2k_2k_3|]\\
     &\le c_7\tau^2 [2k_1^2+2|k_2k_3|+2|k_1k_3|+|2k_2k_3|][2|k_2k_3|+2|k_1k_3|+|2k_2k_3|].
\end{split}
\end{equation}}Applying the interpolation arguments to \eqref{eqn:convolutional_kernel_estimate} and \eqref{err-estimate1},
 we have that for any $\theta\in [0,1],$ there exists $c_8,c_9>0$ such that
\begin{align}\nonumber
&\Big|\int_{0}^{\tau} \mathbb E e^{i(B^{\delta,R} (t_n+s)-B^{\delta,R}(t_n))(k^2+k_1^2-k_2^2-k_3^2)}-\mathbb E e^{i(B^{\delta,R} (t_n+s)-B^{\delta,R}(t_n))2k_1^2}ds\Big|\\\nonumber
     &\le c_8 \tau^{\frac 32\theta+2-2\theta}[|k_2k_3|+|k_1k_2|+|k_1k_3|][2k_1^2+2|k_2k_3|+2|k_1k_3|+|2k_2k_3|]^{1-\theta}\\\label{mean-square}
    &\le c_9\tau^{2-\frac 1 2\theta }
    (|k_2k_3|^{2-\theta}+|k_2k_1|^{2-\theta}+|k_1k_3|^{2-\theta} \\\nonumber
    &+|k_2k_3||k_1|^{2-2\theta}+|k_1|^{3-2\theta}|k_2|+|k_1|^{3-2\theta}|k_3|).
\end{align}
According to \eqref{mean-square} and H\"older's inequality, there exists $c_{10}>0$ such that
\begin{align*}
    J_1
    &\le c_{10}\tau^{3-\theta}\E[(1+\|z\|^6_{H^{\bs+3-2\theta}}+\|y\|^6_{H^{\bs+3-2\theta}})]+c_{10}\tau \mathbb E [E_{n}^2]. 
\end{align*}
Combining the estimates of $J_1$-$J_4$, applying the Gronwall's inequality and taking the square root, we obtain
\begin{align*}
    \sup_{n\le K}\sqrt{\E[E_{n}^2]}\lesssim \tau^{1-\frac \theta 2}
\end{align*}
under an additional $H^{\bs+3-2\theta}$ regularity condition on the initial data.
Taking $\theta=\frac {3-\gamma}2,$ if $\gamma\le 3$,  completes the proof.
\end{proof}

Thanks to Theorem \ref{tm-improve} and Lemma \ref{lm-improv-order}, we have the following overall estimate. It can be seen  that in the mean-squre sense, we only need three additional regularity on the initial data to derive the first order convergence w.r.t.\ $\tau$.
Note that in the existing results \cite{cui2017stochastic,belaouar2015numerical,cohen2017exponential}, one usually requires higher regularity assumption to prove first order convergence.

\begin{corollary}\label{cor-1}
  Let $T>0$, $\bs\in \mathbb N,\gamma \in \mathbb N^+,$   and $u_0\in L^2$.
 There exists a constant  $\epsilon>0$ such that if $\|u_0\|_{\dot H^{\bs+\gamma}}\le \epsilon,$
then for 
\eqref{wong-zakai} with $R\in [\sqrt{8|\ln(\delta)|},+\infty]$, $\delta\in(0,\tau]$ with $0<\tau=\frac {T}{K}$ and  $n\le K$, and $N\in \mathbb N^+\cup\{+\infty\}$ 
satisfying that $ 
 R\epsilon$ and $\tau$ are sufficient small,
it holds that  
    \begin{align*}
&\sup_{n\le K}\left\|u(t_n)-u^{\delta,R,N}_n\right\|_{L^2(\Omega;H^{\bs})} \lesssim  \tau^{\max(\frac {\gamma+1}4,1)}+\delta^{\max (\frac{\gamma}4,1)}+N^{-\gamma}. 
    \end{align*} 
\end{corollary}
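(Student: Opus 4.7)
The plan is to obtain this result as an immediate consequence of the two preceding quantitative estimates, by splitting the total error via the triangle inequality into a Wong--Zakai/Galerkin approximation error and a time-discretization error. More precisely, for each $n\le K$ I would write
\begin{align*}
\|u(t_n)-u_n^{\delta,R,N}\|_{L^2(\Omega;H^{\bs})}
&\le \|u(t_n)-u^{\delta,R,N}(t_n)\|_{L^2(\Omega;H^{\bs})}\\
&\quad+\|u^{\delta,R,N}(t_n)-u_n^{\delta,R,N}\|_{L^2(\Omega;H^{\bs})},
\end{align*}
and then bound each summand separately by a previously proved result.

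The first summand is controlled directly by Lemma \ref{lm-improv-order}, which under the smallness hypothesis $\|u_0\|_{\dot H^{\bs+\gamma}}\le\epsilon$ and the truncation choice $R\ge\sqrt{8|\ln(\delta)|}$ (both of which are assumed in the corollary) yields the bound $\delta^{\max(\gamma/4,1)}+N^{-\gamma}$, uniformly in $t\in[0,T]$. The second summand is exactly what Theorem \ref{tm-improve} estimates, giving $\tau^{\max((\gamma+1)/4,1)}$ under precisely the same smallness and parameter conditions $R\epsilon$, $\tau$ small enough and $\delta\in(0,\tau]$ that appear in the corollary. Taking the supremum over $n\le K$ and combining the two estimates produces the claimed bound.

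Since both invoked results are stated under essentially the same hypotheses, the only point requiring a brief check is that the constant $\epsilon$ can be chosen small enough to satisfy simultaneously the smallness assumptions of Lemma \ref{lm-improv-order} and Theorem \ref{tm-improve}; this is handled by taking $\epsilon$ to be the minimum of the two constants provided by those statements. There is no genuine analytical obstacle here, so the proof is essentially a one-line triangle inequality followed by invocation of the two quoted estimates, with no additional Gronwall argument or moment computation needed at this final step.
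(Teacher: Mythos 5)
Your proposal matches the paper's intended argument exactly: the corollary is stated as a direct consequence of Theorem \ref{tm-improve} and Lemma \ref{lm-improv-order}, combined via the triangle inequality with $u^{\delta,R,N}(t_n)$ as the intermediate quantity, and your remark about choosing $\epsilon$ as the minimum of the two smallness thresholds is the only reconciliation needed. Nothing further is required.
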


\begin{rk}
    Another way to improve the order of the low-regularity integrator is replacing $I_{n,\tau}$
by a higher-order approximation, such as   
$$\int_0^{\tau}e^{-2\bi k k_1(B^{\delta,R}(t_n+s)-B^{\delta,R}(t_n))}+e^{2\bi k_2k_3(B^{\delta,R}(t_n+s)-B^{\delta,R}(t_n))}ds-\tau.$$
As a consequence, an order $1$ scheme (w.r.t. $\tau$) in stochastic case can be obtained via similar arguments by requiring only one additional bounded derivative. This kernel approximation also lends itself naturally to the construction of mass-preserving numerical methods, which will be reported in further detail in forthcoming work.
\end{rk}

\section{Numerical experiments}
\label{sec-4}
Let us now demonstrate the practical performance of SDLRI \eqref{eqn:1st_order_lri}. All experiments were conducted using a Matlab implementation of the corresponding algorithms, which will be made available at \cite{glimpse}. In the following experiments we work with low-regularity data of the following form: First we fix the number of Fourier modes, $2N+1$, in the spatial discretisation and take a sample of a vector of complex numbers sampled uniformly at random:
\begin{align*}
	\mathbf{U}=\left(U_{-N},\dots, U_{N}\right), \quad U_j\sim U([0,1+\bi ]),\,\, j=-N,\dots, N.
\end{align*}
Our initial condition is then given by its Fourier coefficients for a specified value of $\theta$:
\begin{align}\label{eqn:form_of_random_initial_data}
	\hat{u}^{0}_{-N+j}=\langle -N+j\rangle^{-\theta} U_{-N+j},\quad j=0,\dots, 2N,\, \text{where\ }\langle m\rangle=\begin{cases}|m|,&m\neq 0,\\
		1,& m=0.
	\end{cases}
\end{align}
In what follows we use two reference methods from prior work.

\begin{itemize}
    \item \textbf{The relaxed CN method} from \cite{Besse2004,belaouar2015numerical}, in the following form:
\begin{align*}\begin{cases}\frac{\phi^{n+\frac{1}{2}}+\phi^{n-\frac{1}{2}}}{2}=\left|u^n\right|^{2}, \\ u^{n+1}=u^n+\bi(B(t_{n+1})-B(t_n))\Delta\left(\frac{u^{n+1}+u^n}{2}\right)+\bi \tau\lambda\left(\frac{u^{n+1}+u^n}{2}\right) \phi^{n+\frac{1}{2}}.\end{cases}
\end{align*}
\item\textbf{A Wong--Zakai version of the exponential Euler method}:
\begin{align*}
    &u^{n+1}=e^{\bi \psi(\tau)\Delta}u_n\\
    &\ \ +\bi \tau \lambda e^{\bi \psi(\tau)\Delta}\left(\sum_{k=1}^{\lfloor \frac {\tau}{\delta} \rfloor} e^{-\bi \psi(\tau_{k-1})\Delta}(\tau_{k}-\tau_{k-1})\varphi_1(-\bi (\psi(\tau_{k})-\psi(\tau_{k-1}))\Delta)\right)|u_n|^2u_n,
\end{align*}
where we write $\psi(s)=B^{\delta,+\infty}(t_n+s)-B^{\delta,\infty}(t_n)$  and $0=\tau_0<\tau_1<\cdots<\tau_{\lfloor \frac {\tau}{\delta} \rfloor}=\tau$ is such that the Wong--Zakai approximation of $B$ is piecewise linear on $[\tau_{l},\tau_{l+1}]$ with $l\le \lfloor \frac {\tau}{\delta} \rfloor$.
\end{itemize}

In the following experiments the reference solution is computed with a split step method (cf. \cite{MR4278943}) with the timestep $\tau_{ref}=10^{-4}, N_{ref}=1024$. In  practice we did not need to impose a restriction on $R$ as the observations of the Brownian motion that we used in our experiments did not grow too rapidly and so we took $R=\infty$ for the below experiments. The initial data was rescaled to $\|u_0\|_{L^2}=0.1$, however the scaling was not relevant in practice and we found qualitatively similar results also for larger data.
\subsection{Validation on deterministic regime}
To begin with, we exhibit that our methods behave as expected in the ``classical'' setting when $\delta=T=1$. In this case, we approximate the Brownian motion with a linear function $B(0)+({B(1)-B(0)})s$ on the entire time interval. The result can be seen in Figure~\ref{fig:verification} and shows that our methods behave as expected - clear first order convergence for the exponential method, Lie Splitting and SDLRI and clear second order for the relaxed Crank--Nicolson method.
\begin{figure}[h!]
    \centering
\includegraphics[width=0.42\textwidth]{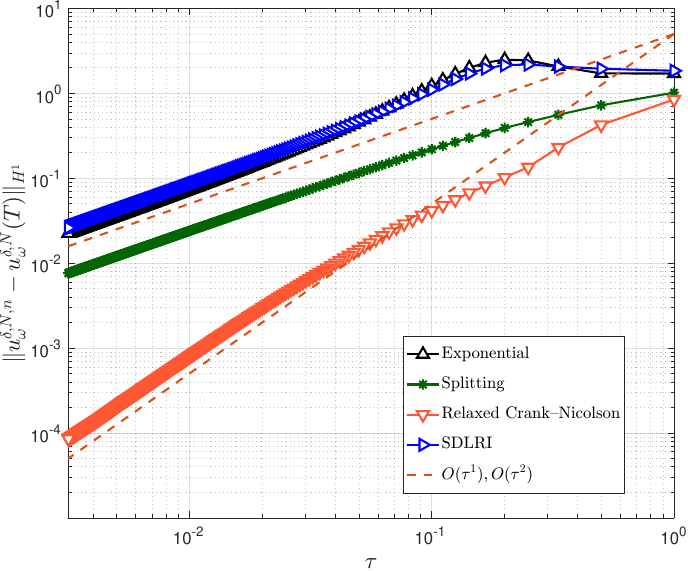}
    \caption{The approximation error under $H^1$-norm in the linear dispersion regime $\delta=1, R=\infty, N=512$ for a single trajectory with $C^{\infty}$-data.}
\label{fig:verification}
\end{figure}
\newpage\subsection{Strong convergence to Wong--Zakai approximation}
As the second numerical experiment, we study the strong error of the new integrator in light of Theorem~\ref{tm-improve}. For this we plot the quantity
\begin{align*}
&\left\|u^{\delta,R,N}(T)-u^{\delta,R,N}_{K}\right\|_{L^2(\Omega;H^{\bs})},
\end{align*}
%{\color{red}$n_T$ should be $K$?}
computed using 60 random samples of the Brownian motion. The results for initial data of a range of different levels of regularity can be seen in Figure~\ref{fig:varying_regularity_512} which shows that indeed our methods converge as predicted in Theorem~\ref{tm-improve}. In fact, since we measure the error in $H^1$ by Theorem~\ref{tm-improve} we expect the method to converge at least at order $\mathcal{O}(\tau^{1/2})$ for data in $H^2$ and at order $\mathcal{O}(\tau)$ as soon as the data is at least in $H^4$, which can be observed in practice. On the other hand the reference methods clearly exhibit worse convergence behaviour in these low-regularity regimes.
\begin{figure}[h!]
    \centering
    \begin{subfigure}{0.495\textwidth}
\includegraphics[width=0.95\textwidth]{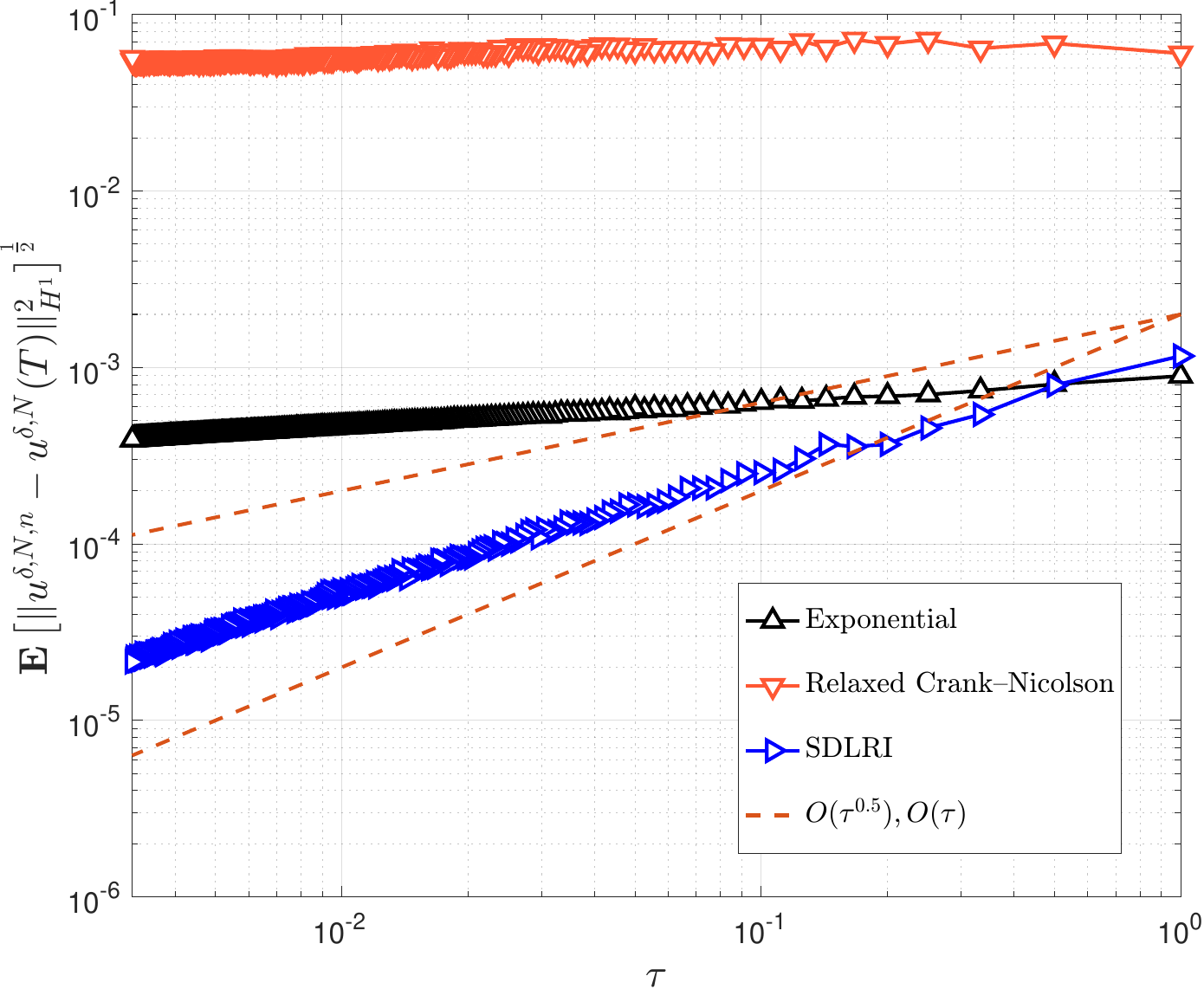}
		\caption{$H^2$ data.}
	\end{subfigure}%
    \begin{subfigure}{0.495\textwidth}
\includegraphics[width=0.95\textwidth]{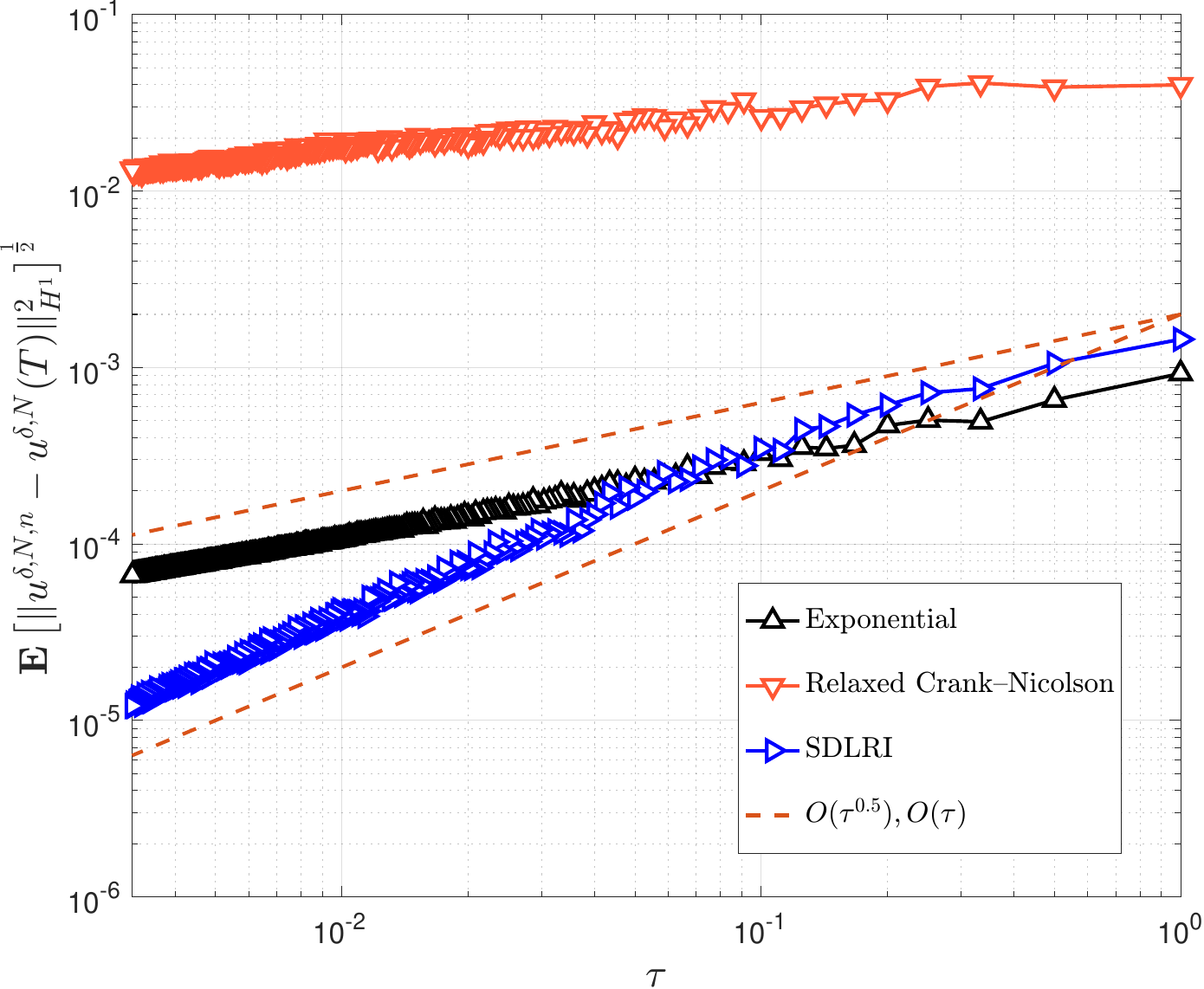}
		\caption{$H^3$ data.}
	\end{subfigure}\\
        \begin{subfigure}{0.495\textwidth}
\includegraphics[width=0.95\textwidth]{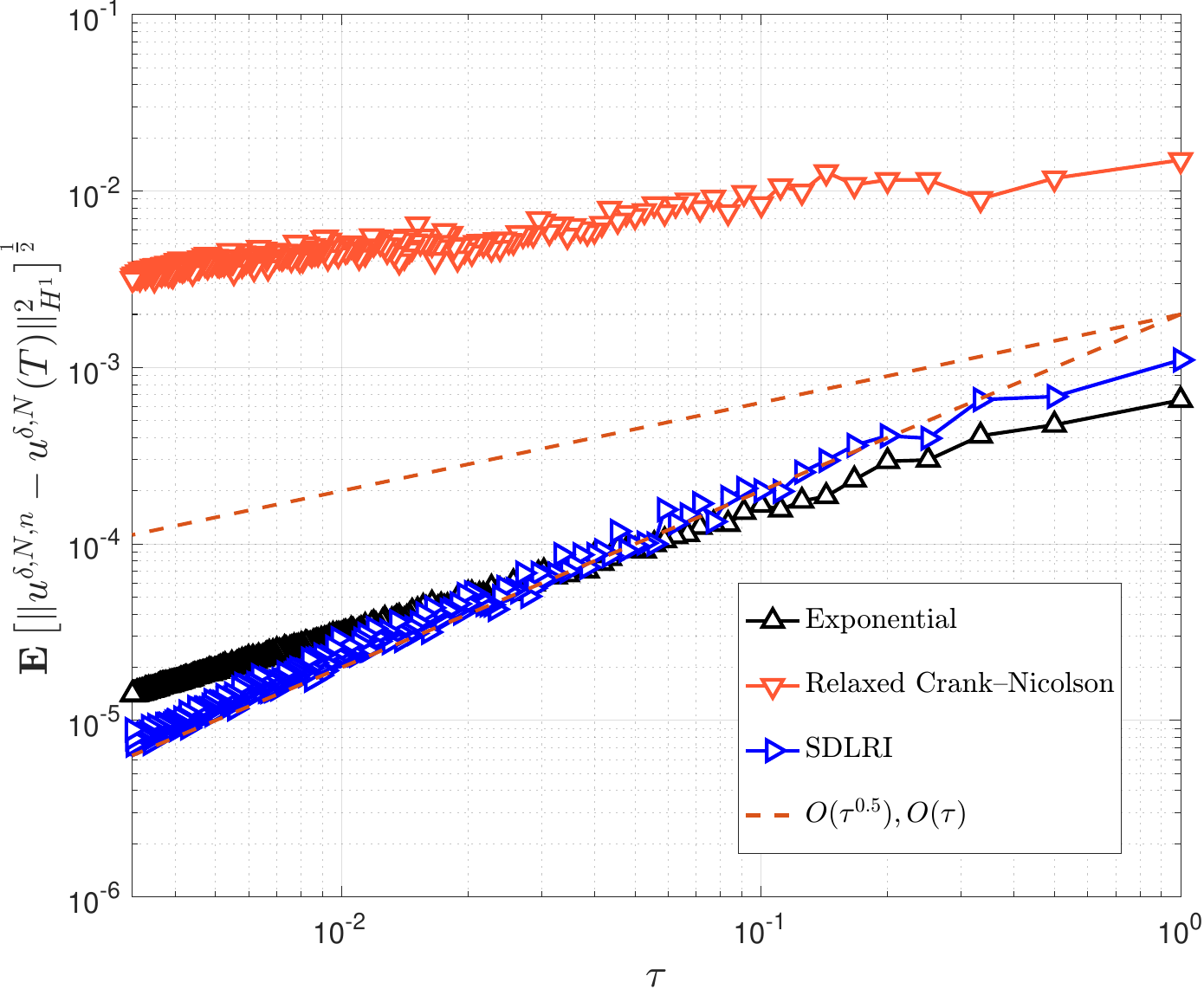}
		\caption{$H^4$ data.}
	\end{subfigure}%
        \begin{subfigure}{0.495\textwidth}
\includegraphics[width=0.95\textwidth]{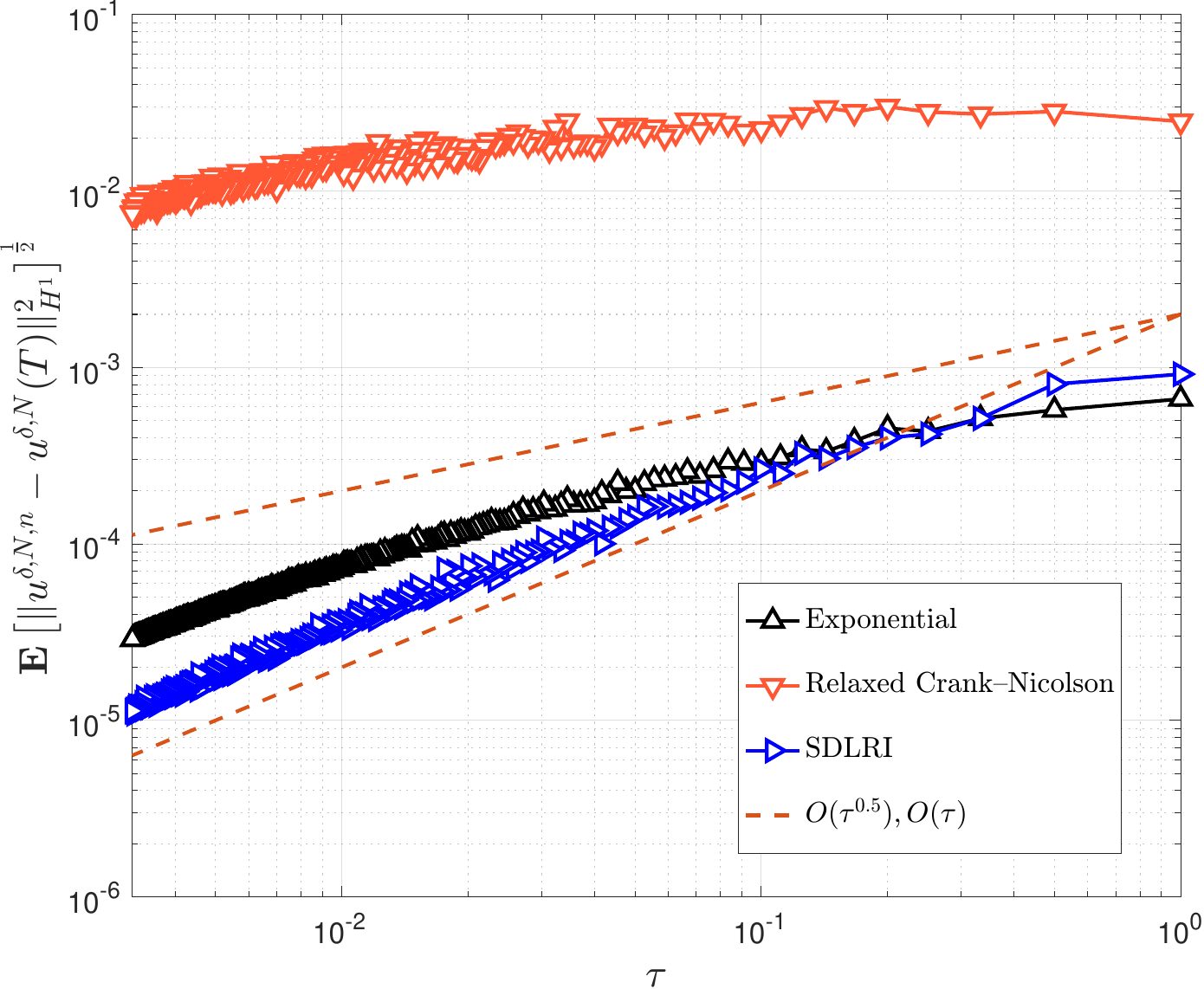}
		\caption{$C^{\infty}$ data.}
	\end{subfigure}%
    \caption{Strong error plots with $N=512$ and $\delta=2^{-12}$ for varying regularity of $u_0$ and $T=1$.}
    \label{fig:varying_regularity_512}
\end{figure}
\newpage\subsection{Pathwise convergence to Wong--Zakai approximation}
As a third experiment we validate the pathwise convergence result exhibited in Theorem~\ref{thm:global_error_pathwise_convergence_wong-zakai}. Indeed let us fix a path $\omega$ and a corresponding Wong--Zakai approximation $u^{\delta,N}$ with $\delta<1$ as specified below. In Figures~\ref{fig:varying_delta_512} \& \ref{fig:varying_regularity_512_pathwise} we observe the effect of the size of $\delta$ and the regularity of the solution on the convergence behaviour of our method SDLRI in the pathwise sense. In this experiment we include an additional reference method, the Lie splitting as studied in \cite{cui2017stochastic,MR4400428}:
\begin{align*}
    u^{n+1}=e^{\bi \psi(\tau)\Delta}e^{\bi \tau|u^n|^2}u^n,
\end{align*}
where we write $\psi(s)=B^{\delta,+\infty}(t_n+s)-B^{\delta,+\infty}(t_n)$. We observe that our method performs as expected but that, perhaps somewhat surprisingly, the splitting method appears to converge well also in the low-regularity regime when $\delta\ll 1$. This improved convergence rate will be studied in future work.

\begin{figure}[h!]
    \centering
    \begin{subfigure}{0.495\textwidth}
\includegraphics[width=0.85\textwidth]{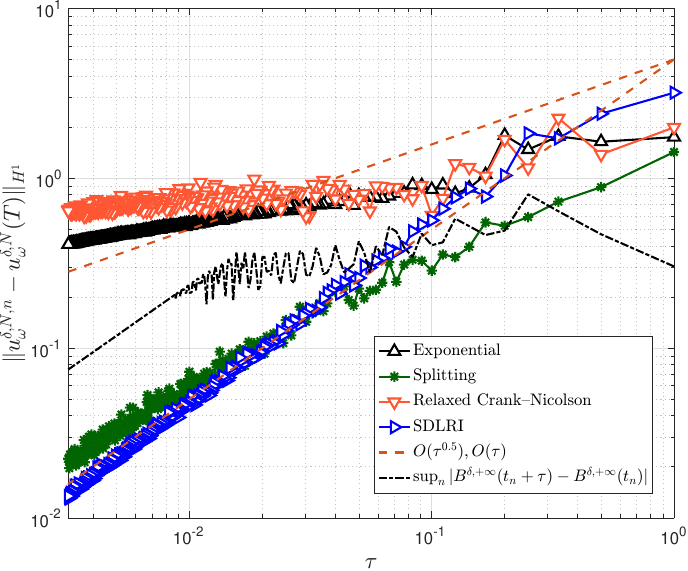}
		\caption{$\delta=2^{-6}$.}
	\end{subfigure}%
    \begin{subfigure}{0.495\textwidth}
\includegraphics[width=0.85\textwidth]{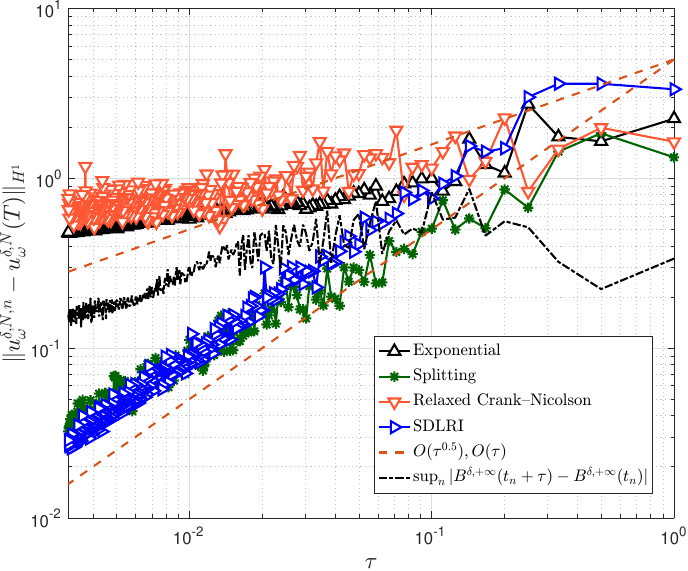}
		\caption{$\delta=2^{-9}$.}
	\end{subfigure}\\
        \begin{subfigure}{0.495\textwidth}
\includegraphics[width=0.85\textwidth]{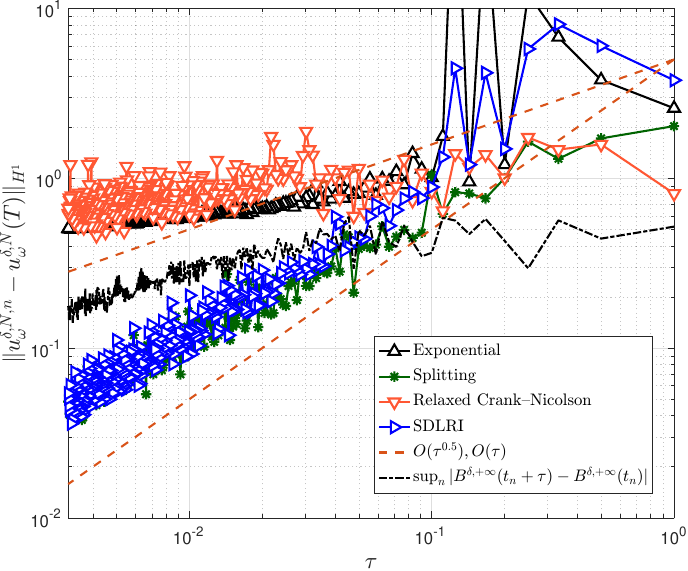}
		\caption{$\delta=2^{-12}$.}
	\end{subfigure}%
    \caption{Pathwise convergence plots with $N=512$ and $H^2$-data for several choices of Wong--Zakai parameter $\delta$.}
    \label{fig:varying_delta_512}
\end{figure}

\newpage
\begin{figure}[h!]
    \centering
    \begin{subfigure}{0.495\textwidth}
\includegraphics[width=0.85\textwidth]{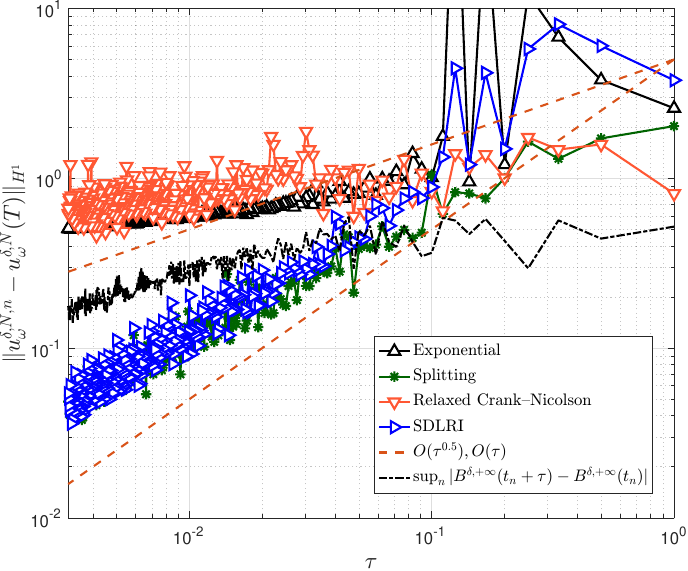}
		\caption{$H^2$ data.}
	\end{subfigure}%
    \begin{subfigure}{0.495\textwidth}
\includegraphics[width=0.85\textwidth]{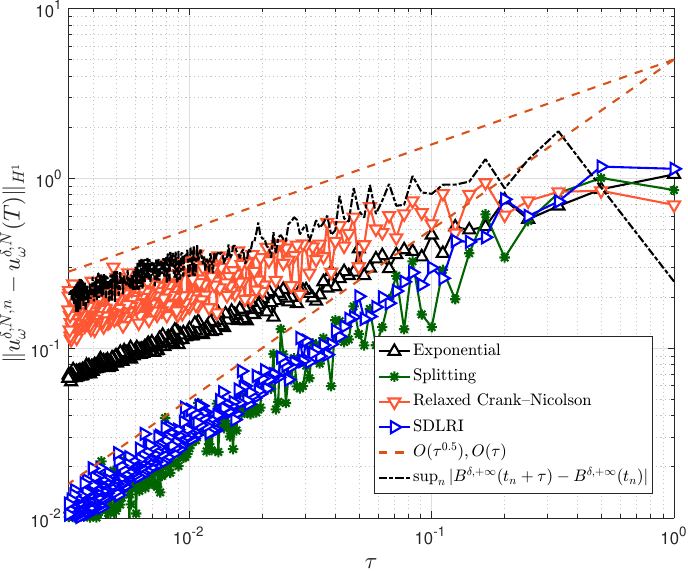}
		\caption{$H^3$ data.}
	\end{subfigure}\\
        \begin{subfigure}{0.495\textwidth}
\includegraphics[width=0.85\textwidth]{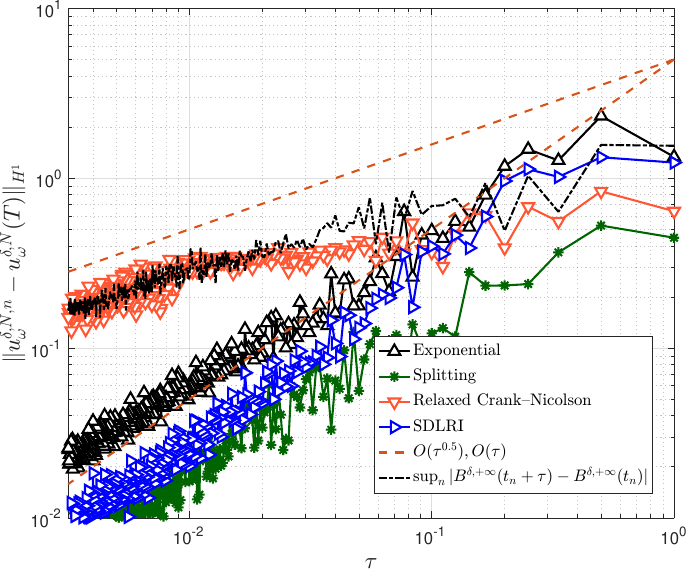}
		\caption{$C^{\infty}$ data.}
	\end{subfigure}%
    \caption{Pathwise convergence plots with $N=512$ and $\delta=2^{-12}$ for varying regularity of $u_0$.}
    \label{fig:varying_regularity_512_pathwise}
\end{figure}
\newpage\ \newpage\subsection{Convergence with respect to Wong--Zakai parameter}
As a final numerical experiment we examine the convergence of the numerical approximation to the original solution as a function of the Wong--Zakai parameter $\delta$ as estimated in Proposition~\ref{prop:approx_prop_wong_zakai_periodic}. To understand this behavior in further detail it is helpful to consider the approximation rate estimated in Lemma \ref{hold-B}.

The results of the numerical experiment are shown in Figure~\ref{fig:varying_regularity_and_delta_512} and are found to be in good agreement with Proposition~\ref{prop:approx_prop_wong_zakai_periodic}.

\begin{figure}[h!]
    \centering
    \begin{subfigure}{0.495\textwidth}
\includegraphics[width=0.85\textwidth]{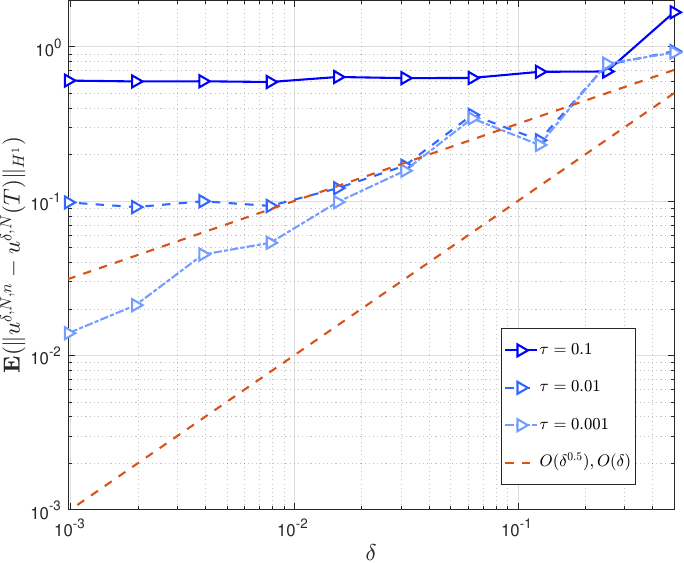}
		\caption{$H^2$ data.}
	\end{subfigure}%
    \begin{subfigure}{0.495\textwidth}
\includegraphics[width=0.85\textwidth]{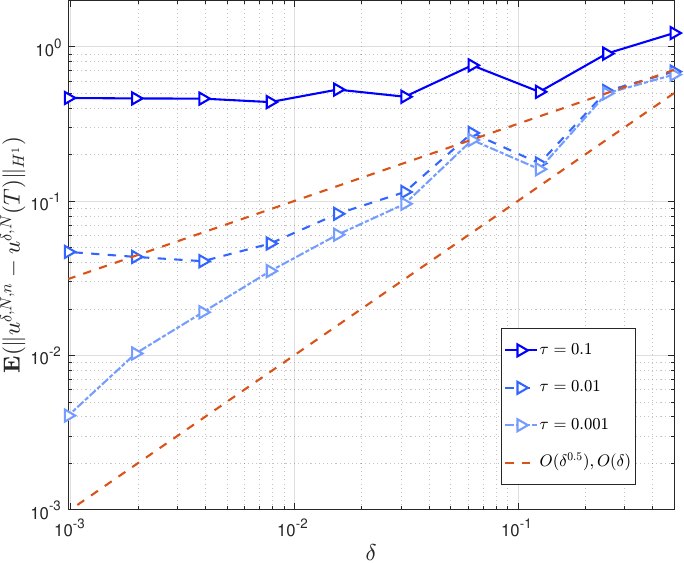}
		\caption{$H^3$ data.}
	\end{subfigure}\\
        \begin{subfigure}{0.495\textwidth}
\includegraphics[width=0.85\textwidth]{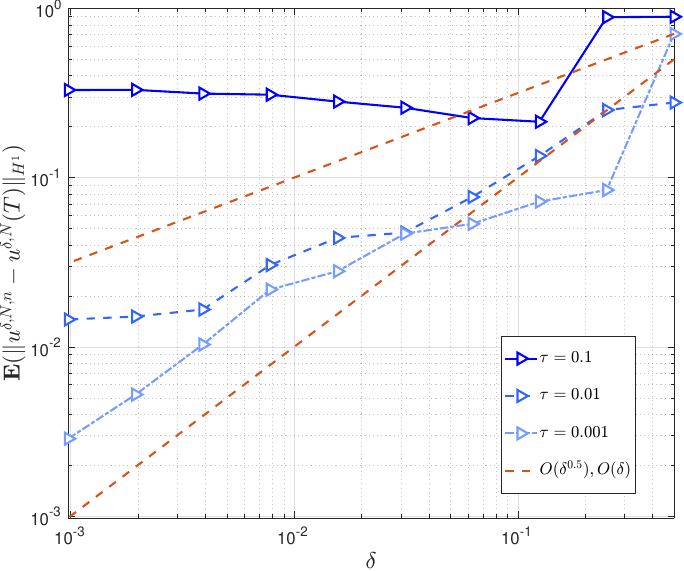}
		\caption{$C^{\infty}$ data.}
	\end{subfigure}%
    \caption{Error in expectation for SDLRI \eqref{eqn:1st_order_lri} a function of $\delta$ for $N=512$ and $\tau=0.1,0.01,0.001$ for varying regularity of $u_0$.}
    \label{fig:varying_regularity_and_delta_512}
\end{figure}

\newpage\section{Conclusions and future work}\label{sec:conclusions} In this paper, we introduced the first low-regularity integrator for the NLSE with white noise dispersion. Our construction is based on the Wong--Zakai approximation which leads to an amenable form of the nonlinear frequency interactions with linear phase in Duhamel's formula. This facilitates the design of a resonance-based method which leads to favorable low-regularity convergence properties. We then further exploited this special form of the numerical method to derive improved strong convergence estimates for this new integrator which are, to the best of our knowledge, the first strong error estimates for a method for \eqref{model2}. These results are supported in numerical experiments that also highlight the favorable practical low-regularity convergence properties of our new method.
Based on promising recent work in \cite{arXiv:2410.22359} and \cite{maierhofer2023bridging}, our future work will focus on the development of structure-preserving algorithms for \eqref{model2}.

\section*{Acknowledgments}
{The authors would like to thank Adrien Busnot Laurent (INRIA Rennes) for several helpful discussions. {Part of this work was undertaken while the authors were participating in the Workshop on Structure-preserving numerical methods and their applications at the Hong Kong Polytechnic University, in July 2024.} GM gratefully acknowledges funding from the Mathematical Institute, University of Oxford. The research is partially supported by research grants NSF DMS-2307465 and ONR N00014-21-1-2891. The research of the first author is partially supported by the Hong Kong Research Grant Council GRF grant 15302823, NSFC grant 12301526, NSFC/RGC Joint Research Scheme N$\_$PolyU5141/24, MOST National Key $R\&D$ Program No. 2024YFA1015900, the internal grants 
(P0045336, P0046811) from Hong Kong Polytechnic University and the CAS AMSS-PolyU Joint Laboratory of Applied Mathematics.}

%    Text of article.

%    Bibliographies can be prepared with BibTeX using amsplain,
%    amsalpha, or (for "historical" overviews) natbib style.
\bibliographystyle{amsplain}
\bibliography{references}
%    Insert the bibliography data here.

\begin{appendix}
\section{Definition of the solution}
\label{sec-appendix-1}

We introduce the definition of mild solutions of the stochastic nonlinear Schr\"odinger equation with white noise dispersion (1.2) (see also \cite{Ste24}) and its Wong--Zakai approximation (2.5).

\begin{df}\label{def-1}
An $L^2$-valued $\mathcal F_t$-adapted process $u(t),t\in [0,T],$ is called a mild solution of (1.2) if 
\begin{align*}
 u(t)=e^{\bi \Delta B(t)}u_0+\bi \lambda \int_0^t e^{\bi \Delta(B(t)-B(s))} |u(s)|^2 u(s)ds, \text{a.s.}   
\end{align*}
for each $t\in[0,T].$ In particular, the appearing integrals have to be well-defined.
\end{df}

\begin{df}\label{def-2}
Let $\delta\in (0,1), R\in [1,+\infty], N\in\mathbb N^+\cup\{+\infty\}.$
An $L^2$-valued stochastic process $u^{\delta,R,N}(t),t\in [0,T],$ is called a mild solution of (2.5) if 
\begin{align*}
 u^{\delta,R,N}(t)=e^{\bi \Delta B^{\delta,R}(t)}u_0+\bi \lambda \int_0^t e^{\bi \Delta(B^{\delta,R}(t)-B^{\delta,R}(s))}  |u^{\delta,R,N}(s)|^2 u^{\delta,R,N}(s)ds, \text{a.s.}   
\end{align*}
for each $t\in[0,T].$ In particular, the appearing integrals have to be well-defined.
\end{df}

Note that in Definition \ref{def-1}, the solution $u(t)$ is $\mathcal F_t$-adapted if $u_0$ is $\mathcal F_0$-adaptive. In contrast, the Wong-Zakai approximation   $u^{\delta,R,N}(t)$ is $\mathcal F_{\lfloor \frac {t}\delta \rfloor\delta+\delta}$-adaptive  for any $t\in [0,T]$ due to the definition of $B^{\delta,R}$. In particular, on the time $i\delta, i\in \mathbb N, i\le \lfloor \frac {T}\delta \rfloor$, $u(i\delta)$ is $\mathcal F_{i\delta}$-adaptive.

\section{Well-posedness without the smallness condition}
\label{sec-appendix-2}

\subsection{Well-posedness for the Wong--Zakai approximation}

Following the idea in \cite{Ste24}, we show the global well-posedness of the Wong--Zakai approximation (2.5) with $R=+\infty$ 
in $L^4([0,T];L^4)\cap \mathcal C([0,T];L^2)$. 
Namely, we consider $u^{\delta,N}:=u^{\delta,+\infty,N}$ satisfying \eqref{wong-zakai}, i.e.
\begin{align*}
d u^{\delta,N}(t)=\bi \Delta u^{\delta,N}(t)\circ d B^{\delta}(t)+\bi \lambda \pi_N|u^{\delta,N}(t)|^2u^{\delta,N}(t) dt.
\end{align*}
where we denote $B^{\delta}:=B^{\delta,+\infty}$ for simplicity as the standard linear Wong--Zakai approximation of Brownian motion.

Notice that for $\psi\in L^2,$
\begin{align*}
e^{\bi B^{\delta}(t)\Delta}\psi (x)=\sum_{k\in \mathbb Z}a_ke^{\bi k x- \bi k^2  \frac {B(\delta)-B(0)}{\delta} t}, \;\forall \; t\in [0,\delta],
\end{align*} where $a_k=\frac 1{2\pi}\int_{\mathbb T} \psi(x) e^{-\bi kx} dx.$

\begin{lm}\label{sto-group}
For each interval
 $[i\delta,(i+1)\delta]$ with $i\in \mathbb N$ and any $\psi\in L^2$,  it holds that 
\begin{align}\label{l4tol2}
 \|e^{\bi[B^{\delta}(t)-B^{\delta}(i\delta)]} \psi \|_{L^4([i\delta,(i+1)\delta];L^4)}\lesssim \Big(\delta+\frac {\delta}{|B((i+1)\delta)-B(i\delta)|}\Big)^{\frac 14}\|\psi\|_{L^2}, \; \text{a.s.}  
\end{align}
\end{lm}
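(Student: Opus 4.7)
The plan is to reduce the claim to the standard Bourgain Strichartz inequality on the 1D torus by exploiting the piecewise linear nature of $B^{\delta}$ on $[i\delta,(i+1)\delta]$. Setting
$$\beta_i:=\frac{B((i+1)\delta)-B(i\delta)}{\delta},$$
we have $B^{\delta}(t)-B^{\delta}(i\delta)=\beta_i(t-i\delta)$ for $t\in[i\delta,(i+1)\delta]$, so that $e^{\bi[B^{\delta}(t)-B^{\delta}(i\delta)]\Delta}\psi=e^{\bi\beta_i(t-i\delta)\Delta}\psi$. A change of variables $\tau=\beta_i(t-i\delta)$ (orienting the interval according to the sign of $\beta_i$) then yields
$$\int_{i\delta}^{(i+1)\delta}\|e^{\bi\beta_i(t-i\delta)\Delta}\psi\|_{L^4}^{4}\,dt=\frac{1}{|\beta_i|}\int_{0}^{|\beta_i|\delta}\|e^{\pm\bi\tau\Delta}\psi\|_{L^4}^{4}\,d\tau,$$
and since taking a complex conjugate preserves the $L^4_{\tau,x}$-norm while flipping the sign of the exponent, the estimate is insensitive to the sign of $\beta_i$.

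The second step is to invoke Bourgain's $L^4$-Strichartz estimate on $\mathbb T$, namely $\|e^{\bi\tau\Delta}\phi\|_{L^4_{\tau,x}([0,1]\times\mathbb T)}\lesssim\|\phi\|_{L^2(\mathbb T)}$, and to extend this to arbitrary $T>0$ by partitioning $[0,T]$ into unit-length subintervals. On each interval $[n,n+1]$ one writes $e^{\bi\tau\Delta}\psi=e^{\bi(\tau-n)\Delta}(e^{\bi n\Delta}\psi)$, applies Bourgain's bound, and uses the $L^2$-unitarity of the Schr\"odinger group to conclude
$$\int_{0}^{T}\|e^{\bi\tau\Delta}\psi\|_{L^4}^{4}\,d\tau\lesssim(T+1)\|\psi\|_{L^2}^{4}.$$

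Applying this with $T=|\beta_i|\delta$ gives
$$\int_{i\delta}^{(i+1)\delta}\|e^{\bi\beta_i(t-i\delta)\Delta}\psi\|_{L^4}^{4}\,dt\lesssim\frac{|\beta_i|\delta+1}{|\beta_i|}\|\psi\|_{L^2}^{4}=\Bigl(\delta+\frac{\delta}{|B((i+1)\delta)-B(i\delta)|}\Bigr)\|\psi\|_{L^2}^{4},$$
and extracting the fourth root completes the proof. The only potentially delicate regime is when $|\beta_i|\delta\ll 1$, where the dominant contribution comes from $1/|\beta_i|$ rather than $\delta$: this is precisely what produces the second term in the claimed inequality, and the degenerate event $\{\beta_i=0\}$ has probability zero so the bound is meaningful almost surely. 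The main technical input is therefore the Bourgain Strichartz estimate together with the rescaling that correctly tracks the effective length $|\beta_i|\delta$ of the time interval after change of variables.
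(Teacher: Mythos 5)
Your proposal is correct, but it takes a genuinely different route from the paper's. The paper proves the bound from scratch by an explicit Fourier computation: writing $\|f\|_{L^4}^4=\|f\bar f\|_{L^2}^2$, expanding the quadruple frequency sum, separating the resonant part ($nl=0$) from the non-resonant part, integrating the oscillatory phases $e^{-2\bi n l B(\delta)t/\delta}$ exactly in time to gain the factor $\frac{\delta}{|B(\delta)|}\frac 1{|nl|}$, and closing with Cauchy--Schwarz together with the summability of $\sum_{n\neq 0,l\neq 0}(nl)^{-2}$, which yields exactly $\delta\|\psi\|_{L^2}^4+\frac{\delta}{|B(\delta)|}\|\psi\|_{L^2}^4$. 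You instead invoke the classical periodic $L^4$ Strichartz estimate (Zygmund/Bourgain) as a black box and transfer it by the linear time substitution $\tau=\beta_i(t-i\delta)$ plus a unit-interval covering argument, so that the two terms in the bound arise transparently from the rescaled interval length $|\beta_i|\delta$ and the Jacobian $1/|\beta_i|$; your handling of the sign of $\beta_i$ via conjugation, of the null event $\beta_i=0$, and of the uniformity of the constant in $\omega$, $\delta$ and $i$ is all sound (note the exponent in the lemma statement is missing a $\Delta$, a typo which both you and the paper's proof silently correct). Your route is shorter and more conceptual, at the price of importing the torus Strichartz inequality, whose own proof is a divisor-counting argument of the same flavour as the paper's computation; the paper's self-contained oscillatory-sum technique has the advantage that essentially the same manipulations (integration against the phase in time, then Hausdorff--Young and H\"older) are reused for the companion inhomogeneous estimate in Lemma \ref{lm2.2}, which your approach would have to treat separately, e.g.\ by a duality/Christ--Kiselev-type argument.
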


\begin{proof}
Due to the definition of $B^{\delta},$ it suffices to prove the case that $i=0.$
 Using the fact that $\|f\|_{L^4}=\|f \bar f\|_{L^2}^{\frac 12}$ for any $f\in L^4$ and the Parseval's inequality, 
it holds that 
\begin{align*}
&\int_0^{\delta}\int_{\mathbb T}\Big(\sum_{k}|a_k|^2+\sum_{k_1\neq k_2}a_{k_1}\bar a_{k_2}e^{\bi(k_1-k_2)x-\bi(k_1-k_2)(k_1+k_2) \frac {B(\delta)}{\delta}t}\Big)^{2} dx dt\\
   &\le \delta \|\psi\|_{L^2}^4
   +2\|\psi\|_{L^2}^2 \int_0^{\delta}\int_{\mathbb T}\sum_{k_1\neq k_2} a_{k_1}\bar a_{k_2}e^{\bi(k_1-k_2)x-\bi(k_1-k_2)(k_1+k_2) \frac {B(\delta)}{\delta}t}dxdt\\
   &+\int_0^{\delta}\int_{\mathbb T}
   \sum_{k_1\neq k_2, k_3\neq k_4}a_{k_1}\bar a_{k_2}a_{k_3}\bar a_{k_4} e^{\bi(k_1-k_2)x+\bi(k_3-k_4)x-\bi(k_1-k_2)(k_1+k_2) \frac {B(\delta)}{\delta}t-\bi(k_3-k_4)(k_3+k_4) \frac {B(\delta)}{\delta}t}dxdt.
   \end{align*}
H\"older's inequality and the fact that 
 $|e^{\bi y}|=1 $ for any $y\in\mathbb R,$ yield that 
   \begin{align*}
   &\|e^{\bi B^{\delta}(t)\Delta}\psi\|_{L^4([0,\delta],L^4)}^4\lesssim  \delta \|\psi\|_{L^2}^4+\Big|\int_0^{\delta}
   \sum_{k_1,nl=0} a_{k_1}\bar a_{k_1+l}a_{k_1+n+l}\bar a_{k_1+n}dt\Big|\\
   &+ \Big|\int_0^{\delta}
   \sum_{k_1,l\neq 0,n\neq 0}a_{k_1}\bar a_{k_1+l}a_{k_1+n+l}\bar a_{k_1+n}e^{-\bi 2 ln \frac {B(\delta)}{\delta}t}dt\Big|\\
   &\lesssim  \delta \|\psi\|_{L^2}^4+\frac {\delta}{|B(\delta)|} \sum_{k_1,l\neq 0,n\neq 0} \frac 1{ln}a_{k_1}\bar a_{k_1+l}a_{k_1+n+l}\bar a_{k_1+n}\\
   &\lesssim  \delta \|\psi\|_{L^2}^4
   +\frac {\delta}{|B(\delta)|} 
   \sqrt{\sum_{k_1} |a_{k_1}|^2}
   \sqrt{\sum_{k_1} \Big(\sum_{l\neq0,n\neq 0} \frac 1{nl} \bar a_{k_1+l}\bar a_{k_1+n}a_{k_1+n+l}\Big)^2} \\
   &\lesssim \delta \|\psi\|_{L^2}^4
   +\frac {\delta}{|B(\delta)|} \|\psi\|_{L^2}   \sqrt{\sum_{k_1}
  \sum_{l\neq 0,n\neq 0} \frac 1{l^2 n^2} |a_{k_1+n+l}|^2
   \sum_{l\neq 0} |a_{k_1+l}|^2\sum_{n\neq 0}| a_{k_1+n}|^2}\\
   &\lesssim \delta \|\psi\|_{L^2}^4
   +\frac {\delta}{|B(\delta)|} \|\psi\|_{L^2}^4. 
\end{align*}
\end{proof}
The second ingredient for the well-posedness is to deal with the inhomogeneous part of \eqref{wong-zakai}. 
We also illustrate this estimate on the initial interval $[0,\delta].$

Let $f\in L^{\frac 43}([0,\delta],L^{\frac 43})$ and denote $f(s,x)=\sum_{k}a_k(s)e^{\bi k x}.$
It can be seen that 
\begin{align*}
&\Big\|\int_0^t e^{\bi(B^{\delta}(t)-B^{\delta}(s))\Delta} f(s)ds\Big\|_{L^{4}([0,\delta],L^{4})}^4
=\int_0^{\delta}
\int_{[0,t]^4}\sum_{k,n,l}A_{k,n,l}(\vec{s})E_{k,n,l}(t, \vec{s})
d\vec{s}dt\\
&=\int_0^{\delta}
\int_{[0,t]^4}\sum_{k,n\neq 0, l\neq 0}A_{k,n,l}(\vec{s})E_{k,n,l}(t, \vec{s})
d\vec{s}dt
+\int_0^{\delta}
\int_{[0,t]^4}\sum_{k,nl=0}A_{k,n,l}(\vec{s})E_{k,n,l}(t, \vec{s})
d\vec{s}dt\\
&-\int_0^{\delta}
\int_{[0,t]^4}\sum_{k}A_{k,0,0}(\vec{s})E_{k,0,0}(t, \vec{s})
d\vec{s}dt,
\end{align*}
where $\vec{s}=(s_1,s_2,s_3,s_4),$
\begin{align*}
A_{k,n,l}(\vec{s})&=a_{k}(s_1)\bar a_{k-l}(s_2)a_{k-l-n}(s_3)\bar a_{k-n}(s_4),\\
E_{k,n,l}(t,\vec{s})&=e^{\bi \frac {B(\delta)}{\delta}\Big((t-s_1)k^2-(t-s_2)(k-l)^2-(t-s_3)(k-l-n)^2+(t-s_4)(k-n)^2\Big)}.
\end{align*}
Using the Hausdorff--Young inequality and H\"older's inequality, it follows that 
\begin{align*}
&\int_0^{\delta}
\int_{[0,t]^4}\sum_{k}A_{k,0,0}(\vec{s})E_{k,0,0}(t, \vec{s})
d\vec{s}dt\\
&\le C \int_0^{\delta}\int_{[0,t]^4} \|a_{k}(s_1)\|_{l^4_k}\|a_{k}(s_1)\|_{l^4_k}\|a_{k}(s_1)\|_{l^4_k}\|a_{k}(s_1)\|_{l^4_k}d\vec{s}dt\\
&\le C  \int_0^{\delta}\int_{[0,t]^4} \|f(s_1)\|_{L^{\frac 43}}\|f(s_2)\|_{L^{\frac 43}} \|f(s_3)\|_{L^{\frac 43}} \|f(s_4)\|_{L^{\frac 43}} d\vec{s} 
dt\\
&\le C \delta^2 
(\int_0^{\delta} \|f(t)\|_{L^{\frac 43}}^{\frac 43}dt)^{3}
\end{align*}
Using again H\"older's inequality and Parseval's inequality, 
we have that 
\begin{align*}
\int_0^{\delta}
&\int_{[0,t]^4}\sum_{k,nl=0}A_{k,n,l}(\vec{s})E_{k,n,l}(t, \vec{s})
d\vec{s}dt\\
&= 2\int_0^{\delta}\Big|\int_{[0,t]^2}\sum _{k}a_k(s_1)\bar a_k(s_2)e^{\bi \frac {B(\delta)}{\delta}(s_2-s_1)k^2} ds_1ds_2\Big|^2dt\\
&\le C \delta  \Big(\int_0^{\delta} \|f(t)\|_{L^{\frac 43}}^{\frac 34}dt\Big)^{\frac 32}\Big\|\int_0^t e^{\bi(B^{\delta}(t)-B^{\delta}(s))\Delta} f(s)ds\Big\|_{L^{4}([0,\delta],L^{4})}^2\\
&\le C \delta  \Big(\int_0^{\delta} \|f(t)\|_{L^{\frac 43}}^{\frac 34}dt\Big)^{3}+\frac 12\Big\|\int_0^t e^{\bi(B^{\delta}(t)-B^{\delta}(s))\Delta} f(s)ds\Big\|_{L^{4}([0,\delta],L^{4})}^4.
\end{align*}

It suffices to estimate the term $\int_0^{\delta}
\int_{[0,t]^4}\sum\limits_{k,n\neq 0, l\neq 0}A_{k,n,l}(\vec{s})E_{k,n,l}(t, \vec{s})
d\vec{s}dt.$
Using the integration by parts formula,  we get 
\begin{align*}
&\int_0^{\delta}
\int_{[0,t]^4}\sum_{k,n\neq 0, l\neq 0}A_{k,n,l}(\vec{s})E_{k,n,l}(t, \vec{s})
d\vec{s}dt\\
&= 24 \sum_{k,n\neq 0, l\neq 0} \int_0^{\delta}
E_{k,n,l}(t,\vec{0}) 
\int_{0}^t\int_0^{s_1}\int_{0}^{s_2}\int_0^{s_3}
 A_{k,n,l}(\vec{s})  E_{k,n,l}(0,\vec{s}) d\vec{s}dt\\
&=- 24 \sum_{k,n\neq 0, l\neq 0} \frac {1}{\bi2 nl}\frac {\delta}{B(\delta)} e^{-2\bi nl B(\delta)}
\int_0^{\delta}\int_0^{s_1}\int_{0}^{s_2}\int_0^{s_3}A_{k,n,l}(\vec{s})  E_{k,n,l}(0,\vec{s}) d\vec{s}\\
&+24\int_0^{\delta}\sum_{k,n\neq 0, l\neq 0} \frac {1}{\bi 2 nl}\frac {\delta}{B(\delta)} e^{-\bi 2nl \frac {B(\delta)}{\delta} t}\int_0^{t}\int_{0}^{s_2}\int_0^{s_3}a_{k}(t)\bar a_{k-l}(s_2)a_{k-l-n}(s_3)\\
&\qquad \bar a_{k-n}(s_4) e^{\bi \frac {B(\delta)}{\delta}\Big(-t k^2+s_2(k-l)^2+s_3(k-l-n)^2-s_4(k-n)^2\Big)} ds_4ds_3ds_2dt.
\end{align*}
According to the fact that $|e^{\bi y}|=1 $ for any $y\in\mathbb R,$  using the Hausdorff--Young inequality,  we further obtain
\begin{align*}
&\Big|\int_0^{\delta}
\int_{[0,t]^4}\sum_{k,n\neq 0, l\neq 0}A_{k,n,l}(\vec{s})E_{k,n,l}(t, \vec{s})
d\vec{s}dt\Big|\\
&
\le 
C \frac {\delta}{|B(\delta)|}
\sum_{k,n\neq 0, l\neq 0}\frac {1}{nl}
\int_0^{\delta}\int_0^{s_1}\int_{0}^{s_2}\int_0^{s_3}|A_{k,n,l}(\vec{s})|  d\vec{s}
\\
&+C \frac {\delta}{|B(\delta)|}\sum_{k,n\neq 0, l\neq 0}\frac {1}{nl} \int_0^{\delta}\int_0^{t}\int_{0}^{s_2}\int_0^{s_3}|a_{k}(t)| |a_{k-l}(s_2)||a_{k-l-n}(s_3)||a_{k-n}(s_4)|ds_4ds_3ds_2dt\\
&\le C \frac {\delta}{|B(\delta)|}\sum_{ n\neq 0, l\neq 0} \frac 1{nl} (\int_0^\delta \|f(t)\|_{L^{\frac 43}}^{\frac 43}dt)^{3}.
\end{align*}

Combining the above estimates together and truncating the Fourier series by the projection $\pi_N$, we have derived the following result.

\begin{lm}\label{lm2.2}
For each interval $[i\delta,(i+1)\delta]$ with $i\in\mathbb N$ and any $f\in L^{\frac 43}([i\delta,(i+1)\delta];L^{\frac 43}),$ it holds that 
\begin{align*}
&\Big\|\pi_N\int_{i\delta}^{t} e^{\bi(B^{\delta}(t)-B^{\delta}(s))\Delta}  f(s)ds\Big\|_{L^{4}([i\delta,(i+1)\delta];L^{4})}\\
&\lesssim (\delta+\frac {\delta}{|B((i+1)\delta)-B(i\delta)|})^{\frac 14}|\log(N)|^{\frac 12}\|f(s)\|_{L^{\frac 43}([i\delta,(i+1)\delta];L^{\frac 43})}, \text{a.s.}
\end{align*}
\end{lm}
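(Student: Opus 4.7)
The plan is to combine the three pointwise bounds that have already been derived in the preceding computation, track how the Galerkin projection $\pi_N$ affects each of them, and absorb the leftover $\|\cdot\|^4_{L^4L^4}$ term on the left-hand side.

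First, by translation in time and the piecewise linearity of $B^{\delta}$ on $[i\delta,(i+1)\delta]$, I would reduce to the case $i=0$ with the increment $B(\delta)-B(0)=B(\delta)$ replaced in the general case by $B((i+1)\delta)-B(i\delta)$. This is purely notational since nothing in the preceding computation used the location of the interval.

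Next, I would insert $\pi_N$ on the Fourier side. The projection forces $|k|,|k-l|,|k-l-n|,|k-n|\le N$, which restricts the summation indices to $|n|,|l|\le 2N$. Taking this into account in the three cases already computed:
\begin{itemize}
    \item the $n=l=0$ diagonal contributes at most $C\delta^2\bigl(\int_0^\delta\|f\|_{L^{4/3}}^{4/3}dt\bigr)^3$;
    \item the $nl=0$, $(n,l)\neq(0,0)$ part, after Cauchy--Schwarz and Young, contributes $C\delta\bigl(\int_0^\delta\|f\|_{L^{4/3}}^{4/3}dt\bigr)^3+\tfrac12\|\text{LHS}\|_{L^4L^4}^4$;
    \item the $n\neq0,\,l\neq0$ part, using the integration-by-parts bound already computed, is controlled by
$$C\frac{\delta}{|B(\delta)|}\sum_{\substack{1\le|n|,|l|\le 2N}}\frac{1}{|nl|}\Bigl(\int_0^\delta\|f\|_{L^{4/3}}^{4/3}dt\Bigr)^3\lesssim \frac{\delta\,|\log N|^2}{|B(\delta)|}\Bigl(\int_0^\delta\|f\|_{L^{4/3}}^{4/3}dt\Bigr)^3,$$
where I use the elementary fact $\sum_{1\le m\le 2N}1/m\sim\log N$, so that $\sum_{1\le|n|,|l|\le 2N}1/|nl|\lesssim|\log N|^2$.
\end{itemize}

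Finally I would add the three contributions, absorb the $\tfrac12\|\text{LHS}\|_{L^4L^4}^4$ into the left, use $\delta^2\le\delta$ (since $\delta<1$) to combine the purely-$\delta$ terms, and obtain
$$\Bigl\|\pi_N\!\!\int_0^t e^{\bi(B^\delta(t)-B^\delta(s))\Delta}f(s)\,ds\Bigr\|_{L^4([0,\delta];L^4)}^4\lesssim \Bigl(\delta+\frac{\delta}{|B(\delta)|}\Bigr)|\log N|^2\,\|f\|_{L^{4/3}([0,\delta];L^{4/3})}^{4}.$$
Taking fourth roots turns the $|\log N|^2$ factor into $|\log N|^{1/2}$ and the $(\delta+\delta/|B(\delta)|)$ factor into $(\delta+\delta/|B(\delta)|)^{1/4}$, which is exactly the claimed estimate after restoring the translation to $[i\delta,(i+1)\delta]$. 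The only mildly non-routine point is the $|\log N|$ bookkeeping: one has to be careful that the truncated double sum $\sum 1/|nl|$ gives $|\log N|^2$ and not worse, so that the fourth-root gives the advertised $|\log N|^{1/2}$ rather than a larger power.
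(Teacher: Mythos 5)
Your proposal is correct and follows essentially the same route as the paper: the paper's proof is precisely the quadrilinear Fourier expansion carried out before the lemma, combined by "truncating the Fourier series by the projection $\pi_N$." You have simply made explicit the step the paper leaves implicit — that $\pi_N$ restricts $|n|,|l|\le 2N$ so that $\sum_{1\le |n|,|l|\le 2N}|nl|^{-1}\lesssim |\log N|^2$, which after absorbing the $\tfrac12\|\cdot\|_{L^4L^4}^4$ term and taking fourth roots yields exactly the stated $(\delta+\delta/|B((i+1)\delta)-B(i\delta)|)^{1/4}|\log N|^{1/2}$ bound.
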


Next we are in a position to show global-wellposedness of the Wong--Zakai approximation \eqref{wong-zakai}.

\begin{tm}\label{tm-wong-zakai}
Let $T>0$, $N\in \mathbb N^+$ and $u_0\in L^2.$ 
For almost every fixed $\omega\in \Omega$, there exists $\delta>0$ small enough such that the unique mild solution of \eqref{wong-zakai} exists.
Furthermore, it satisfies the mass conservation law 
$$\|u^{\delta,N}(t)\|_{L^2}=\|u_0\|_{L^2},$$
for any $t\in [0,T].$
\end{tm}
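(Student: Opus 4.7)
The plan is to build the solution on $[0,T]$ by concatenating local mild solutions on the subintervals $I_i := [i\delta,(i+1)\delta]$, $i = 0, 1, \ldots, T/\delta - 1$, constructed via a Banach fixed point in the Strichartz-type space $X_i := L^4(I_i; L^4)$, with the $L^2$-datum fed into each step controlled by a mass conservation law that is propagated across subintervals.

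On $I_i$, with $\mathcal F_{i\delta}$-measurable datum $u^{\delta,N}(i\delta) \in \pi_N L^2$, I would consider the Duhamel map
\begin{align*}
\Gamma_i(\psi)(t) := e^{\bi (B^\delta(t)-B^\delta(i\delta))\Delta}u^{\delta,N}(i\delta) + \bi\lambda\int_{i\delta}^t e^{\bi(B^\delta(t)-B^\delta(s))\Delta}\pi_N(|\psi|^2\psi)(s)\,ds.
\end{align*}
Combining the H\"older identity $\||\psi|^2\psi\|_{L^{4/3}(I_i;L^{4/3})} = \|\psi\|_{X_i}^3$ with Lemmas~\ref{sto-group} and~\ref{lm2.2} gives
\begin{align*}
\|\Gamma_i(\psi)\|_{X_i} \lesssim C_i\bigl(\|u^{\delta,N}(i\delta)\|_{L^2} + |\log N|^{1/2}\|\psi\|_{X_i}^3\bigr),
\end{align*}
with $C_i := (\delta + \delta/|B((i+1)\delta)-B(i\delta)|)^{1/4}$, and an analogous Lipschitz bound on differences. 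Restricting to a ball of radius $\sim C_i \|u^{\delta,N}(i\delta)\|_{L^2}$ yields a contraction as soon as the smallness condition $C_i^3|\log N|^{1/2}\|u^{\delta,N}(i\delta)\|_{L^2}^2 \le c_\star$ holds. Continuity of the resulting fixed point in $\mathcal C(I_i;L^2)$ then follows by re-injecting it into the mild formula and using the $L^2$-isometry of $e^{\bi B^\delta(\cdot)\Delta}$ for the linear term together with a duality form of Lemma~\ref{lm2.2} for the Duhamel term.

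For mass conservation I would exploit that $B^\delta$ is piecewise linear: on $I_i$ the Stratonovich equation \eqref{wong-zakai} reduces to the deterministic PDE on the finite-dimensional space $\pi_N L^2$,
\begin{align*}
\partial_t u^{\delta,N} = \bi\beta_i\Delta u^{\delta,N} + \bi\lambda\pi_N(|u^{\delta,N}|^2 u^{\delta,N}),\qquad \beta_i := \frac{B((i+1)\delta)-B(i\delta)}{\delta}.
\end{align*}
Pairing with $\bar u^{\delta,N}$ and taking real parts, the dispersive term vanishes by skew-adjointness of $\bi\Delta$ and the nonlinear term vanishes because $\pi_N u^{\delta,N} = u^{\delta,N}$ together with $\mathrm{Re}(\bi|u|^4)\equiv 0$. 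Inductively this propagates $\|u^{\delta,N}(i\delta)\|_{L^2} = \|u_0\|_{L^2}$ for every $i$, so the contraction criterion collapses to the datum-independent statement $C_i^3|\log N|^{1/2}\|u_0\|_{L^2}^2 \le c_\star$.

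The main obstacle is achieving this bound uniformly over all $i \le T/\delta - 1$: although for a.e.\ $\omega$ every Brownian increment on a finite partition is strictly positive, the Gaussian small-ball tails can make $\delta/|B((i+1)\delta)-B(i\delta)|$ arbitrarily large on some subinterval as $\delta$ shrinks. I would resolve this pathwise by allowing $\delta = \delta(\omega)$ to be a positive random variable: a union bound combined with the small-ball estimate $\mathbb P(|B((i+1)\delta)-B(i\delta)|\le \varepsilon\sqrt\delta) \lesssim \varepsilon$ and a Borel--Cantelli argument along a dyadic sequence $\delta_n \downarrow 0$ shows that on a set of full measure one can pick $\delta(\omega)$ small enough that $\max_{i\le T/\delta-1} C_i(\omega)$ satisfies the required bound. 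With such $\delta$ fixed, the local fixed points concatenate into a unique mild solution $u^{\delta,N}\in L^4([0,T];L^4)\cap\mathcal C([0,T];L^2)$, inheriting mass conservation from each subinterval; uniqueness on $[0,T]$ follows by successive local uniqueness.
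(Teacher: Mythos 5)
Up to its final step your construction is the same as the paper's: the local fixed point in $L^4([i\delta,(i+1)\delta];L^4)$ built from Lemmas~\ref{sto-group} and~\ref{lm2.2}, the concatenation over the $\delta$-subintervals, and the propagation of $\|u^{\delta,N}(i\delta)\|_{L^2}=\|u_0\|_{L^2}$ (the paper cites the chain rule where you pair with $\bar u^{\delta,N}$ on $\pi_NL^2$; both are fine). The genuine gap is in your proposed resolution of the ``main obstacle''. Your contraction criterion $C_i^3|\log N|^{1/2}\|u_0\|_{L^2}^2\le c_\star$ with $C_i=(\delta+\delta/|B((i+1)\delta)-B(i\delta)|)^{1/4}$ amounts to $\min_{i}|B((i+1)\delta)-B(i\delta)|\ge c_0\delta$ for a constant $c_0>0$ depending only on $N,\lambda,\|u_0\|_{L^2}$ (the paper's conditions \eqref{sim-cond1}--\eqref{sim-cond2} reduce to exactly the same requirement). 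The union bound plus Borel--Cantelli along dyadic $\delta_n$ cannot produce this: summability of $\sum_n (T/\delta_n)\varepsilon_n$ forces $\varepsilon_n\lesssim\delta_n$ (up to logarithms), so the lower bound you obtain is only $\min_i|B((i+1)\delta_n)-B(i\delta_n)|\gtrsim\delta_n^{3/2}$, whence $\max_iC_i\gtrsim\delta_n^{-1/8}\to\infty$ rather than $\max_iC_i\le c_\star^{1/3}$. In fact the typical minimal increment over a mesh-$\delta$ partition of $[0,T]$ is of order $\delta^{3/2}$, and the complementary estimate $\mathbb P\big(\min_i|B((i+1)\delta)-B(i\delta)|\ge c_0\delta\big)\le(1-c\sqrt{\delta})^{T/\delta}\le e^{-cT/\sqrt{\delta}}$ is summable along any fixed sequence $\delta_n\downarrow0$, so Borel--Cantelli shows the event you need a.s.\ \emph{fails} for all large $n$: a deterministic sequence of scales is precisely the wrong tool, and the statement ``for all sufficiently small dyadic $\delta$, $\max_iC_i(\omega)$ is small'' is false.

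For contrast, the paper does not attempt such a quantitative multi-scale argument: having reduced matters to \eqref{sim-cond1}--\eqref{sim-cond2}, it simply appeals to the a.s.\ positivity of $\inf_{i\le M}|B((i+1)\delta)-B(i\delta)|/\sqrt{\delta\log(1/\delta)}$ for the realized path and chooses $\delta=\delta(\omega)$ accordingly, i.e.\ the selection of $\delta$ is made pathwise for the single trajectory rather than along a prescribed sequence. If you want a fully self-contained argument for the theorem as stated (finite $N$), a cleaner route is available: the flow of \eqref{wong-zakai} stays in the finite-dimensional space $\pi_NL^2$, where the projected cubic nonlinearity is locally Lipschitz and all norms are equivalent, so Cauchy--Lipschitz gives local existence for every $\omega$ and every $\delta$, and the conserved $L^2$-norm rules out blow-up, yielding global existence and uniqueness with $N$-dependent constants; but as written, your Borel--Cantelli step would not close the proof.
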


\begin{proof}
For convenience, we denote $M$ such that $M\delta \le T\le (M+1)\delta$
with $\delta=\delta(\omega)>0$ being determined later.
First, we show that the map defined by 
\begin{align*}
\Gamma (\psi)(t):=e^{\bi B^{\delta}(t) \Delta}\pi_N u_0+\bi \pi_N \int_0^t e^{\bi (B^{\delta}(t)-B^{\delta}(s))\Delta}|\psi(s)|^2\psi(s)ds, \; \forall\; t\in [0,\delta],
\end{align*}
has the unique fixed point which is also the local solution on a small interval $[0,\delta]$. We claim that the map $\Gamma$ is well-defined on $L^4([0,\delta],L^4)$. Indeed, by Lemmas \ref{sto-group} and \ref{lm2.2}, 
for $\psi\in L^4([0,\delta],L^4)\cap \mathcal C([0,\delta]; L^2)$ with $\|\psi\|_{L^4([0,\delta],L^4)}\le R$ with some large $R>\max(1,\|u_0\|_{L^2})$, 
\begin{align*}
&\|\Gamma(\psi)\|_{L^4([0,\delta];L^4)}\\
&\le \|e^{\bi B^{\delta}(t)\Delta}u_0\|_{L^4([0,\delta];L^4)}
+|\lambda|\Big\|\int_0^{t}e^{\bi(B^{\delta}(t)-B^{\delta}(s))\Delta} \pi_{N} |\psi(s)|^2\psi(s) ds\Big\|_{L^4([0,\delta];L^4)}
\\
&\le C_1(\delta+\frac {\delta}{|B(\delta)|})^{\frac 14}\|u_0\|_{L^2(\mathbb T)}
+C_2(\delta+\frac {\delta}{|B(\delta)|})^{\frac 14}|\log(N)|^{\frac 12}
\|\psi\|_{L^4([0,\delta];L^4)}^{3}<+\infty.
\end{align*}
By choosing 
\begin{align}\label{cond-1}
    C_1(\delta+\frac {\delta}{|B(\delta)|})^{\frac 14} \le \frac 1 2, C_2(\delta+\frac {\delta}{|B(\delta)|})^{\frac 14}|\log(N)|^{\frac 12}R^2\le \frac 12,
\end{align}
 we have verified that $\Gamma$ maps the ball in $L^4([0,\delta],L^4)$ into itself. 
The contraction property of $\Gamma$ also holds  since 
\begin{align*}
 \|\Gamma(\psi_1)-\Gamma(\psi_2)\|_{L^4([0,\delta];L^4)}
 &\le C_3 (\delta+\frac {\delta}{|B(\delta)|})^{\frac 14}|\log(N)|^{\frac 14}R^2
\|\psi_1-\psi_2\|_{L^4([0,\delta];L^4)},
\end{align*}
by requiring  
\begin{align}\label{cond-2}
   C_3 (\delta+\frac {\delta}{|B(\delta)|})^{\frac 14}|\log(N)|^{\frac 12}R^2<1. 
\end{align}
As a consequence, if \eqref{cond-1} and \eqref{cond-2} hold, then a standard fixed point  argument leads to the local well-posedness of Galerkin approximation. 

We denote the solution on $[0,\delta]$ by $u^{\delta,N}.$ The mass conservation law on $[0,\delta]$ holds due to the chain rule.
To extend the local solution to the global one, we need repeat the above procedures and make use of the fact that $B((i+1)\delta)-B(i\delta)$ shares the same distribution of $B(\delta)$ for any $i\in \mathbb N.$
It suffices to verify the conditions like \eqref{cond-1} and \eqref{cond-2} in each subinterval, i.e.,
\begin{align}\label{sim-cond1}
&C_4 \delta^{\frac 14}|\log(N)|^{\frac 12}R^2\le \frac 12, \\\label{sim-cond2}
&C_4|\log(N)|^{\frac 12}R^2 \frac {\delta^\frac 18}{\log(\frac 1\delta)^{\frac 18}} \le \frac 12 \Big(\frac {|B((i+1)\delta)-B(i\delta)|}{\sqrt{\delta \log(\frac 1\delta)}}\Big)^{\frac 14},
\end{align}
where $C_4=\max(C_1,C_2,C_3).$
Since 
\begin{align*}
 \inf_{i=1}^M \frac {|B((i+1)\delta)-B(i\delta)|}{\sqrt{\delta \log(\frac 1\delta)}}>0, a.s.,   
\end{align*}
 the condition \eqref{sim-cond2} holds for $\delta>0$ sufficiently small. We can let $\delta^{\frac 18} |\log(N)|^{\frac 12} R^2$ small enough such that \eqref{sim-cond1} also holds.
\end{proof}

To end this part, we would like to point out that there is no a priori uniform bound under the norm $L^4([0,T];L^4).$ By using similar arguments in the proof of Theorem \ref{tm-wong-zakai}, one can show that if $u_0\in H^{\bs}$ with $\bs>\frac d2,$ then $u^{\delta,N}\in \mathcal C([0,T];H^{\bs}).$ But there is still no a priori control on the norm $\mathcal C([0,T];H^{\bs})$ unless the smallness condition is imposed.

\subsection{H\"older estimate of $B^{\delta,R}$ (Proof of \eqref{hold-est})}\label{app:proof_of_hold-est}

This result can be found in various forms in \cite{SL2017JDE,MR183023} and related literature but for completeness we provide a proof below. 

\begin{lm}\label{hold-B}
 Let $T>0,$ $p\ge 1$, $\delta\in [0,1), R\ge \max(\sqrt{4p|\ln(\delta)|},pe^{-1})$. It holds that for any $t,s\in [0,T],$
 \begin{align}\label{hold-est_app}
   \|B^{\delta,R}(t)-B^{\delta,R}(s)\|_{L^p(\Omega)}\lesssim \max(|t-s|^{\frac 12},\delta^{\frac 12}). 
 \end{align}
\end{lm}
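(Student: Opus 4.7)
The plan is to bound the increment of $B^{\delta,R}$ by comparing it to the corresponding increment of the Brownian motion $B$, using Lemma \ref{lm-2.1-wong-zakai} (with the hypothesis $R\ge \max(\sqrt{4p|\ln(\delta)|},pe^{-1})$) to control the approximation error at each endpoint. Specifically, by the triangle inequality,
\begin{align*}
\|B^{\delta,R}(t)-B^{\delta,R}(s)\|_{L^p(\Omega)} &\le \|B^{\delta,R}(t)-B(t)\|_{L^p(\Omega)} + \|B(t)-B(s)\|_{L^p(\Omega)}\\
&\quad + \|B(s)-B^{\delta,R}(s)\|_{L^p(\Omega)}.
\end{align*}
The first and third terms are each bounded by a multiple of $\sqrt{\delta}$ via Lemma \ref{lm-2.1-wong-zakai}, while the middle term is bounded by a multiple of $|t-s|^{1/2}$ by the standard $\tfrac12$-H\"older regularity of Brownian motion in $L^p(\Omega)$. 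Combining these gives
\[
\|B^{\delta,R}(t)-B^{\delta,R}(s)\|_{L^p(\Omega)} \lesssim |t-s|^{1/2} + \delta^{1/2} \lesssim \max(|t-s|^{1/2},\delta^{1/2}).
\]

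There is no genuine obstacle here: the statement is essentially an immediate consequence of Lemma \ref{lm-2.1-wong-zakai}. The only mild subtlety is ensuring that the constants in Lemma \ref{lm-2.1-wong-zakai} apply at both time points $t$ and $s$ simultaneously, which follows from the uniform-in-time estimate \eqref{err-wk} for the choice of $R\ge R_0$. An alternative, more direct route would be to argue by cases on whether $|t-s|\ge \delta$ or $|t-s|<\delta$, exploiting the piecewise linearity of $B^{\delta,R}$ on grid intervals of length $\delta$ in the second case; however, this requires dealing with the truncation of $B$ at level $R\sqrt{t}$, which is handled more cleanly by passing through the untruncated Brownian motion as above.
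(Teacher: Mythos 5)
Your argument is correct: the triangle inequality through $B(t)$ and $B(s)$, the uniform-in-$t$ bound $\|B^{\delta,R}(t)-B(t)\|_{L^p(\Omega)}\lesssim\sqrt{\delta}$ from Lemma \ref{lm-2.1-wong-zakai} (whose hypothesis on $R$ is exactly the one assumed here, and whose constant depends only on $T$ and $p$), and the $\tfrac12$-H\"older continuity of $B$ in $L^p(\Omega)$ do give the claim, since $|t-s|^{1/2}+\delta^{1/2}\lesssim\max(|t-s|^{1/2},\delta^{1/2})$. The paper's proof uses a slightly different decomposition: it first treats the untruncated case $R=+\infty$ by comparing $B^{\delta,+\infty}(t)$ and $B^{\delta,+\infty}(s)$ to $B$ at the grid points $\lfloor t\rfloor_{\delta}$ and $\lfloor s\rfloor_{\delta}$, exploiting the piecewise-linear structure (the interpolation error on each cell is $\frac{t-\lfloor t\rfloor_\delta}{\delta}(B(\lfloor t\rfloor_\delta+\delta)-B(\lfloor t\rfloor_\delta))$, of size $\lesssim\delta^{1/2}$), and only then brings in Lemma \ref{lm-2.1-wong-zakai} to pass to finite $R$. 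Your route is shorter because Lemma \ref{lm-2.1-wong-zakai} already packages both the interpolation and the truncation errors at a single time point; what the paper's grid-point argument buys is that the H\"older estimate for the untruncated Wong--Zakai approximation is obtained with no condition linking $R$ to $\delta$, the $R$-dependent hypothesis entering only through the truncation step. One trivial point to note: the statement allows $\delta\in[0,1)$ while Lemma \ref{lm-2.1-wong-zakai} is stated for $\delta\in(0,1)$; for $\delta=0$ (forcing $R=+\infty$) the claim is just the standard H\"older continuity of $B$, which the paper records as the base case and which you should mention for completeness.
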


\begin{proof}
When $\delta=0,R=+\infty,$ \eqref{hold-est_app} is a known result for the standard Brownian motion. In general case ($\delta>0$), by Lemma 2.1, it suffices to bound the continuity estimate of the standard Wong--Zakai approximation, i.e., 
$\|B^{\delta,+\infty}(t)-B^{\delta,+\infty}(s)\|_{L^p(\Omega)}.$
From the definition of $B^{\delta,+\infty}$, it folows that 
\begin{align*}
 &\|B^{\delta,+\infty}(t)-B^{\delta,+\infty}(s)\|_{L^p(\Omega)} \\
 &\le \|B^{\delta,+\infty}(t)-B(\lfloor \frac {t}{\delta} \rfloor \delta)\|_{L^p(\Omega)}
 +\|B(\lfloor \frac {t}{\delta}\rfloor\delta )-B(\lfloor \frac {s}{\delta}\rfloor\delta )\|_{L^p(\Omega)}
 +\|B(\lfloor \frac {s}{\delta} \rfloor \delta)-B^{\delta,+\infty}(s)\|_{L^p(\Omega)}\\
 &\lesssim \frac {t-\lfloor \frac {t}{\delta}\rfloor\delta}{\delta} \delta^{\frac 12}+ |\lfloor \frac {t}{\delta}\rfloor\delta-\lfloor \frac {s}{\delta}\rfloor\delta|^{\frac 12}+\frac {s-\lfloor \frac {s}{\delta} \rfloor\delta}{\delta} \delta^{\frac 12}.
\end{align*}
This verifies \eqref{hold-est_app} with $R=+\infty$. For the case that $R<\infty$ and $\delta>0$, by using Lemma \ref{lm-2.1-wong-zakai} and \eqref{hold-est_app} with $R=+\infty$, we complete the proof.
\end{proof}

\section{Proof of some lemmas}

\subsection{Proof of Lemma \ref{lm-2.1}}\label{app:proof_of_lem-2.1}

\begin{proof}
    In the Fourier coordinates we have
    \begin{align*}
        &\left\|[e^{\bi \Delta (B(t_1)-B(t_2))}-e^{\bi \Delta (B^{\delta,R}(t_1)-B^{\delta,R}(t_2))}]\psi \right\|_{H^{\bs}}^2\\
        &= \sum_{m\in\mathbb{Z}}\left|e^{-\bi m^2(B(t_2)-B(t_1))}-e^{-\bi m^2(B^{\delta,R}(t_2)-B^{\delta,R}(t_1))}\right|^2|\hat{\psi}_m|^2(1+m^{2\bs}).
    \end{align*}
    Here $\widehat \psi_m$ is the Fourier transformation of $\psi.$ 
    Using the estimate $|e^{\bi a}-1|\leq 2 |a|^{\frac \gamma 2},$ for all $\gamma\in [0,2], a\in\mathbb{R},$ we have
    \begin{align*}
        &\left\|[e^{\bi \Delta (B(t_1)-B(t_2))}-e^{\bi \Delta (B^{\delta,R}(t_1)-B^{\delta,R}(t_2))}]\psi \right\|_{H^{\bs}}^2\\
&\lesssim \sum_{m\in\mathbb{Z}}|B(t_2)-B(t_1)-(B^{\delta,R}(t_2)-B^{\delta,R}(t_1))|^\gamma (1+m^{2(\bs+\gamma)})|\hat{\psi}_m|^2\\
        &\lesssim \big(|B(t_1)-B^{\delta,R}(t_1)|^{\frac{\gamma}{2}} +|B(t_2)-B^{\delta,R}(t_2)|^{\frac{\gamma}{2}}\big)^2 \|\psi\|_{H^{\bs+\gamma}}^2.
 \end{align*}
 This implies the desired result. 
 \end{proof}

\subsection{Proof of Lemma \ref{lm-improv-order}}\label{app:proof_lm-improv-order}

\begin{proof}
For convenience, we only show the detailed proof for the case $N=\infty$. 
Indeed, if $N<\infty$, we can use the following decomposition,
$$ \|u(t)-u^{\delta,R,N}(t)\|_{L^2(\Omega;H^{\bs})}\le\|u(t)- u^{0,+\infty,N}(t)\|_{L^2(\Omega;H^{\bs})}+\|u^{0,+\infty,N}(t)-u^{\delta,R,N}(t)\|_{L^2(\Omega;H^{\bs})} $$
Since the a priori estimates of $u^{\delta,R,N}(t)$ in Theorem 2.1   hold under the smallness condition, one can use the standard arguments for the spectral Galerkin approximation to get
\begin{align*}
     \|u^{0,+\infty,N}(t)-u(t)\|_{L^2(\Omega; H^{\bs})}\lesssim N^{-\gamma},
\end{align*}
if $u_0\in H^{\bs+\gamma}$ for any $\gamma>0.$ To estimate $
  \|u^{0,+\infty,N}(t)-u^{\delta,R,N}(t)\|_{H^{\bs}},
$
the procedures are similar to the case $N=\infty$ below.

For simplicity, we assume that $\frac {T}{\delta}=M$ for some integer $M\in\mathbb N^+.$
Denote $t_n=n\delta$ for $n\le M.$  
We let $N=\infty$ and
 write 
\begin{align*}E_n:=\left\|u(t_n)-u^{\delta,R,+\infty}(t_n)\right\|_{H^{\bs}}.
\end{align*}
Denote the solution flows of the exact solution and the Wong--Zakai approximation by $\widetilde \varphi_{s,t}$ and $\varphi_{s,t}$, respectively. 

Then, we have that for $n\le M-1,$
 \begin{align*}
 \mathbb E [E_{n+1}^2]     &=\mathbb E \left\|\widetilde \varphi_{t_{n},t_{n+1}}(u(t_{n}))-\varphi_{t_{n},t_{n+1}}(u^{\delta,R,+\infty}(t_{n}))\right\|_{H^{\bs}}^2\\
        &\leq \mathbb E \left\|\widetilde \varphi_{t_{n},t_{n+1}}(u(t_{n}))-\varphi_{t_{n},t_{n+1}}(u(t_{n}))\right\|_{H^{\bs}}^2\\
&\quad+\mathbb E \left\|\varphi_{t_{n},t_{n+1}}(u(t_{n}))-\varphi_{t_{n},t_{n+1}}(u^{\delta,R,+\infty}(t_{n}))\right\|_{H^{\bs}}^2\\
&+2\mathbb E \<\widetilde \varphi_{t_{n},t_{n+1}}(u(t_{n}))-\varphi_{t_{n},t_{n+1}}(u(t_{n})), \\
&\quad \quad \varphi_{t_{n},t_{n+1}}(u(t_{n}))-\varphi_{t_{n},t_{n+1}}(u^{\delta,R,+\infty}(t_{n}))\>_{H^{\bs}}\\
&=:\mathbb E[\|\mathcal K_1\|_{H^{\bs}}^2]+\mathbb E[\|\mathcal K_2\|_{H^{\bs}}^2]+2\mathbb E[\<\mathcal K_1,\mathcal K_2\>_{H^{\bs}}].
\end{align*}
On the one hand, similarly to the proof of Proposition \ref{prop:pathwise_local_error}, using the a priori estimate in Theorem \ref{small-wel}, we have that for some constant $C>0,$ 
\begin{align}\nonumber
&\E [\|\mathcal K_1\|_{H^{\bs}}^2] \\\label{est-k1}
&\le 2\E \Big[\Big\|\Big(e^{\bi (B(t_{n+1})-B(t_{n})))\Delta }-e^{\bi (B^{\delta,R}(t_{n+1})-B^{\delta,R}(t_{n})))\Delta } \Big)u(t_{n})\Big\|_{H^{\bs}}^2\Big]\\\nonumber
&+2\E \Big[\Big\|\bi \lambda \int_{t_{n}}^{t_{n+1}} \Big(e^{\bi (B(s)-B(t_{n})))\Delta}|e^{\bi (B(s)-B(t_{n})))\Delta}u(t_{n})|^2e^{\bi (B(s)-B(t_{n})))\Delta}u(t_{n})\\\nonumber
&\qquad -
e^{\bi (B^{\delta,R}(s)-B^{\delta,R}(t_{n})))\Delta}|e^{\bi (B^{\delta,R}(s)-B^{\delta,R}(t_{n})))\Delta}u(t_{n})|^2e^{\bi (B^{\delta,R}(s)-B^{\delta,R}(t_{n})))\Delta}u(t_{n})ds\Big\|_{H^{\bs}}^2 \Big]\\\nonumber
&+C\delta^4=: 2\E[\|\mathcal K_{1,1}\|_{H^{\bs}}^2]+2\E [\|\mathcal K_{1,2}\|^2_{H^{\bs}}]+C\delta^4.
\end{align}
According to the definition of $B^{R,\delta}$ \eqref{eqn:linear_WZ_approx} and \eqref{err-trun} in the proof of Lemma \ref{lm-2.1-wong-zakai}, by a interpolation inequality, it holds that 
\begin{align*}
    \E [\|\mathcal K_{1,1}\|_{H^{\bs}}^2]\lesssim e^{-\frac {R^2}{2}\gamma}\lesssim  \delta^{2\gamma}
\end{align*}
since $R\ge \sqrt{8|\ln(\delta)|}.$

To deal with $\E [\|\mathcal K_{1,2}\|^2_{H^{\bs}}]$, we consider the error in Fourier modes. By Parserval's equality and H\"older's equality, it holds that 
\begin{align*}
&\E [\|\mathcal K_{1,2}\|^2_{H^{\bs}}]\\
&\le |\lambda|^2\delta 
\sum_{k\in \mathbb 
 Z}\int_{0}^{\delta}  \E\Big[ \Big| \sum_{k+k_1=k_2+k_3}\Big(e^{-\bi(B (t_{n}+s)-B(t_{n}))(k^2+k_1^2-k_2^2-k_3^2)} \\
 &\quad\quad\quad\quad\quad\quad\quad\quad\quad\quad\quad\quad\quad\quad\quad\quad-e^{-\bi(B^{\delta,R} (t_{n}+s)-B^{\delta,R}(t_{n}))(k^2+k_1^2-k_2^2-k_3^2)}\Big)\\
 &\quad \times \overline{(\widehat{u(t_{n})})_{k_1}}(\widehat{u(t_{n})})_{k_2}(\widehat{u(t_{n})})_{k_3}\Big|^2 \Big]ds (1+|k|^{2\bs}) .
\end{align*}

By using the properties of the   conditional expectation, 
\begin{align*}
\E [\|\mathcal K_{1,2}\|^2_{H^{\bs}}]
&\le |\lambda|^2\delta 
\sum_{k\in \mathbb 
 Z} (1+|k|^{2\bs}) \E\Big[\int_0^{\delta} \sum_{k+k_1=k_2+k_3,k+l_1=l_2+l_3}  \E [\mathcal K_{1,2}^{k,k_i,l_i}|\mathcal F_{t_{n}}] ds\\
 &\quad \times |(\widehat{u(t_{n})})_{k_1}||(\widehat{u(t_{n})})_{k_2}| |(\widehat{u(t_{n})})_{k_3}|  |(\widehat{u(t_{n})})_{l_1}||(\widehat{u(t_{n})})_{l_2}| |(\widehat{u(t_{n})})_{l_3}| \Big].
\end{align*}
Here we denote \begin{align*}
 \mathcal K_{1,2}^{k,k_i,l_i}&:=\Big(e^{\bi(B (t_n+s)-B(t_n))(k^2+k_1^2-k_2^2-k_3^2)} -e^{\bi(B^{\delta,R} (t_n+s)-B^{\delta,R}(t_n))(k^2+k_1^2-k_2^2-k_3^2)}\Big)\\
 &\times \Big(e^{-\bi(B (t_n+s)-B(t_n))(k^2+l_1^2-l_2^2-l_3^2)} -e^{-\bi(B^{\delta,R} (t_n+s)-B^{\delta,R}(t_n))(k^2+l_1^2-l_2^2-l_3^2)}\Big)\\
 &= \Big[e^{\bi(B (t_n+s)-B(t_n))(k_1^2-k_2^2-k_3^2-l_1^2+l_2^2+l_3^2)}\\
 &+e^{\bi(B (t_n+s)-B(t_n))(k^2+k_1^2-k_2^2-k_3^2)-\bi(B^{\delta,R} (t_n+s)-B^{\delta,R}(t_n)) (k^2+l_1^2-l_2^2-l_3^2)}\\
 &-e^{\bi(B^{\delta,R} (t_n+s)-B^{\delta,R}(t_n))(k^2+k_1^2-k_2^2-k_3^2)-\bi(B (t_n+s)-B(t_n))(k^2+l_1^2-l_2^2-l_3^2)}\\
 &-e^{\bi(B^{\delta,R} (t_n+s)-B^{\delta,R}(t_n))(k_1^2-k_2^2-k_3^2-l_1^2+l_2^2+l_3^2)}\Big].
 \end{align*}
Applying  independent increment properties of the Brownian motion, \eqref{hold-est_app}, and Lemma \ref{lm-2.1-wong-zakai}, and using the Taylor expansion, 
one has that 
\begin{align*}
    \Big|\E [\mathcal K_{1,2}^{k,k_i,l_i}|\mathcal F_{t_{n-1}}]\Big|&\lesssim (e^{-\frac {R^2}2}+\delta)[(k^2+k_1^2-k_2^2-k_3^2)^2+(k^2+l_1^2-l_2^2-l_3^2)^2]\\
    &\lesssim   \delta [(k^2+k_1^2-k_2^2-k_3^2)^2+(k^2+l_1^2-l_2^2-l_3^2)^2].
\end{align*}
As a consequence, for $\gamma>0,$
\begin{align}\label{est-k12}
  \E [\|\mathcal K_{1,2}\|^2_{H^{\bs}}]\lesssim \delta^{2+\frac {\max(\gamma,4)}4} \|u(t_{n})\|^3_{\mathcal H^{\bs+\gamma}}.
\end{align}
This, together with the estimate of $\E[\|\mathcal K_{1,1}\|_{H^\bs}^2]$ and \eqref{est-k1}, yields that 
\begin{align}\label{final-est-k1}
 \E[\|\mathcal K_{1}\|_{H^\bs}^2] 
 \lesssim \delta^{2+\frac {\max(\gamma,4)}4}. 
\end{align}

On the other hand, by the similar arguments as in the proof of Proposition \ref{prop:pathwise_stability}, we have that there exists $C_1>0$ such that
\begin{align}\label{final-est-k2}
\E[\|\mathcal K_{2}\|^2_{H^{\bs}}]
&\le (1+C_1\delta) \E [E_{n}^2].
\end{align}

Now 
it suffices to bound $2\mathbb E[\<\mathcal K_1,\mathcal K_2\>_{H^{\bs}(\mathbb T)}]$.
%Denote $z=u(t_{n-1}), y=u^{\delta,R,N}(t_{n-1}).$ 
Note that 
\begin{align*}
  \mathcal K_1=\mathcal K_{1,1}+\mathcal K_{1,2}+\mathcal K_{1,3},
\end{align*}
where $\mathcal K_{1,3}$ satisfies 
\begin{align*}
 \mathcal K_{1,3}
 &=\bi \lambda \Big(\int_{t_{n}}^{t_{n+1}} e^{\bi (B(s)-B(t_n)))\Delta}|u(s)|^2u(s)ds\\
 &-\int_{t_{n}}^{t_{n+1}} e^{\bi (B(s)-B(t_n)))\Delta}|e^{\bi (B(s)-B(t_n)))\Delta}u(t_{n})|^2e^{\bi (B(s)-B(t_n)))\Delta}u(t_{n})ds\Big)\\
 &-\bi \lambda \Big(\int_{t_{n}}^{t_{n+1}} e^{\bi (B^{\delta,R}(s)-B^{\delta,R}(t_n)))\Delta}|\phi_{t_{n},s}(u(t_{n}))|^2\phi_{t_{n},s}(u(t_{n}))ds,\\
 &-\int_{t_{n}}^{t_{n+1}} e^{\bi (B^{\delta,R}(s)-B^{\delta,R}(t_n)))\Delta}|e^{\bi (B^{\delta,R}(s)-B^{R,\delta}(t_n)))\Delta}u(t_{n})|^2e^{\bi (B^{\delta,R}(s)-B^{\delta,R}(t_n)))\Delta}u(t_{n})ds\Big).
\end{align*}

Then we can see that 
\begin{align*}
  & 2\mathbb E[\<\mathcal K_1,\mathcal K_2\>_{H^{\bs}}]=2\mathbb E \Big\<\mathcal K_{1,1} +\mathcal K_{1,2}+\mathcal K_{1,3},\mathcal K_{2,1}
  +\mathcal K_{2,2}\Big\>_{H^s_k},
\end{align*}
where  
\begin{align*}
 \mathcal K_{2,1}&=e^{\bi (B^{\delta,R}(t_{n+1})-B^{\delta,R}(t_{n})))\Delta } u(t_{n})-e^{\bi (B^{\delta,R}(t_{n+1})-B^{\delta,R}(t_{n})))\Delta } u^{\delta,R,N}(t_{n})\\
 \mathcal K_{2,2}  &= \bi \lambda \Big(\int_{t_{n}}^{t_{n+1}} e^{\bi (B^{\delta,R}(s)-B^{\delta,R}(t_n)))\Delta}|\phi_{t_{n},s}(u(t_{n}))|^2\phi_{t_{n},s}(u(t_{n}))ds\\
  &- e^{\bi (B^{\delta,R}(s)-B^{\delta,R}(t_n)))\Delta}|\phi_{t_{n},s}(u^{\delta,R,N}(t_{n}))|^2\phi_{t_{n},s}(u^{\delta,R,N}(t_{n}))ds\Big).
\end{align*}

We decompose this as 
\begin{align}\label{err-decomp-cross}
  2\mathbb E[\<\mathcal K_1,\mathcal K_2\>_{H^{\bs}}]&=2 \mathbb E \<\mathcal K_{1,2},\mathcal K_{2,1}  \>_{H^{\bs}}\\\nonumber
  &+2 \mathbb E \<\mathcal K_{1,2}, \mathcal K_{2,2} \>_{H^\bs}\\\nonumber
  &+2 \mathbb E \<K_{1,1}+\mathcal K_{1,3},\mathcal K_{2,1}\>_{H^\bs} \\\nonumber
  &+ 2 \mathbb E \<K_{1,1}+\mathcal K_{1,3},\mathcal K_{2,2}\>_{H^\bs}.
\end{align}
By the a priori bounds of $\phi_{t_{n},s}(u(t_{n}))$ and $\tilde \phi_{t_{n},s}(u(t_{n}))$, as well the Young's inequality,
we have that 
\begin{align*}
&2 \mathbb E \<K_{1,1}+\mathcal K_{1,3},\mathcal K_{2,1}\>_{H^\bs}\lesssim \delta \E [E_{n}^2]+\delta^3,
\end{align*}
and 
\begin{align*}
2 \mathbb E \<K_{1,1}+\mathcal K_{1,3},\mathcal K_{2,2}\>_{H^\bs}&\lesssim \delta^3.
\end{align*}
Similarly, by using \eqref{est-k12}, we can obtain that
\begin{align*}
  &2 \mathbb E \<\mathcal K_{1,2}, \mathcal K_{2,2} \>_{H^\bs} \lesssim \delta \mathbb E[E_{n}^2]+\delta^3.
\end{align*}
Finally, we estimate the term $2 \mathbb E \<\mathcal K_{1,2},\mathcal K_{2,1}  \>_{H_k^{\bs}}$. Using the conditional expectation and the independent increments  of Brownian motion, we have that 
\begin{align*}
  &2 \mathbb E \<\mathcal K_{1,2},\mathcal K_{2,1}  \>_{H^{\bs}}\\  &= \sum_{k\in\mathbb Z}\sum_{k+k_1=k_2+k_3}2\E \Re \Big[\E \Big[ \int_{0}^{\delta}\Big(e^{-\bi(B (t_{n}+s)-B(t_{n}))(k^2+k_1^2-k_2^2-k_3^2)}\\
  &\quad\quad-e^{-\bi(B^{\delta,R} (t_{n}+s)-B^{\delta,R}(t_{n}))(k^2+k_1^2-k_2^2-k_3^2)}\Big)e^{\bi (B^{\delta,R}(t_{n+1})-B^{\delta,R}(t_{n}))k^2}\Big|\mathcal F_{t_{n}} \Big]\\&\quad\quad\quad\quad\quad\quad\overline{(\widehat{u(t_{n})})_{k_1}}(\widehat{u(t_{n})})_{k_2}(\widehat{u(t_{n})})_{k_3}\overline{(\widehat{u(t_{n})}-\widehat{u^{\delta,R,+\infty}(t_{n}))}_{k}}\Big] ds (1+|k|^{2\bs})\\\nonumber
  & \lesssim \delta \E[E_{n}^2]+\delta^{1+\frac {\max(\gamma,4)}2}.
\end{align*} 
This, together with the estimates of $
  2 \mathbb E \<\mathcal K_{1,2}, \mathcal K_{2,2} \>_{H_k^\bs}
  $, $2 \mathbb E \<K_{1,1}+\mathcal K_{1,3},\mathcal K_{2,1}\>_{H_k^\bs}$, $2 \mathbb E \<K_{1,1}+\mathcal K_{1,3},\mathcal K_{2,2}\>_{H_k^\bs}$ and \eqref{err-decomp-cross}, implies that 
\begin{align}\label{final-err-cross}
   2 \E [\<\mathcal K_1,\mathcal K_2\>_{H^{\bs}}]\lesssim  \delta \E[E_{n}^2]+\delta^{1+\frac {\max(\gamma,4)}2}.
  \end{align}
Finally, combining \eqref{final-est-k1}, \eqref{final-est-k2}, and \eqref{final-err-cross} together, and then applying the Gronwall inequality, we get 
\begin{align}\label{err-in-grid}
    \sup_{n\le M}\E[E_n^2]\lesssim \delta^{\frac {\max(\gamma,4)} 2}.
\end{align}

For any $t\in [0,T],$ there exists an integer $n\ge 1$ such that $t\in [t_{n-1},t_n]$, then we can repeat the similar  procedures in proving the error estimate of $\E[E_n^2]$ and use \eqref{err-in-grid} to obtain that
\begin{align*}
    \sup_{t\in [0,T]}\E[\|u(t)-u^{\delta,R,+\infty}(t)\|^2_{H^{\bs}}]\lesssim \delta^{\frac {\max(\gamma,4)}2},
\end{align*} 
which completes the proof.

\end{proof}

\end{appendix}

\end{document}